\def\End{{\rm End}}
\def\Ker{{\rm Ker}}
\def\rk{{\rm rk}}
\def\Tr{{\rm Tr}}
\def\Vol{{\rm Vol}}
\theoremstyle{plain}
\newtheorem{theorem}{Theorem}[section]
\newtheorem{proposition/example}[theorem]{Proposition/Example}
\newtheorem{proposition}[theorem]{Proposition}
\newtheorem{corollary}[theorem]{Corollary}
\newtheorem{lemma}[theorem]{Lemma}
\theoremstyle{definition}
\newtheorem{definition}[theorem]{Definition}
\newtheorem{remark}[theorem]{Remark}
\newtheorem{example}[theorem]{Example}
\newtheorem{conjecture/question}[theorem]{Conjecture/Question}
\newtheorem{remark/definition}[theorem]{Remark/Definition}
\newtheorem{definition/notation}[theorem]{Definition/Notation}
\numberwithin{equation}{section}
\begin{document}

\baselineskip=15pt

\title[Hitchin-Kobayashi Correspondence for Quiver Bundles]{The Hitchin--Kobayashi Correspondence for Quiver Bundles over Generalized K\"ahler Manifolds}

\author{Zhi Hu}

\address{{\it Zhi Hu}: \textsc{Research Institute of Mathematical Science, Kyoto University, Kyoto, Japan}
\endgraf \textsc{School of Mathematics, University of Science and Technology of China, Hefei 230026, China}}

\email{halfask@mail.ustc.edu.cn}

\author{Pengfei Huang}

\address{{\it Pengfei Huang}: \textsc{School of Mathematics, University of Science and Technology of China, Hefei 230026, China}
\endgraf \textsc{Laboratoire J.A. Dieudonn\'e, Universit\'e C\^ote d'Azur, CNRS, 06108 Nice, France}}

\email{pfhwang@mail.ustc.edu.cn; pfhwang@unice.fr}

\subjclass[2010]{14D21, 32G13, 32L05, 53C07, 53C25, 53C55 }

\keywords{Generalized K\"ahler geometry, Quiver bundles, Stability, Hermitian-Einstein metrics, Hitchin-Kobayashi correspondence.}
\date{}

\begin{abstract}In this paper, we establish the Hitchin--Kobayashi correspondence for the  $I_\pm$-holomorphic  quiver bundle $\mathcal{E}=(E,\phi)$ over a compact generalized K\"{a}hler manifold $(X, I_+,I_-,g, b)$ such that $g$ is Gauduchon with respect to both $I_+$ and $I_-$, namely $\mathcal{E}$ is $(\alpha,\sigma,\tau)$-polystable if and only if $\mathcal{E}$ admits an  $(\alpha,\sigma,\tau)$-Hermitian--Einstein metric.
\end{abstract}

\maketitle
\tableofcontents
\section{Introduction}
 The Hitchin--Kobayashi correspondence exhibits a deep relation between the algebraic notion of stability and the existence of special metrics on holomorphic vector bundles. There are several generalizations for this correspondence along different directions. For example, one  replaces   base manifolds  with  Hermitian manifolds with Gauduchon metric \cite{LY} or non-compact K\"{a}hler manifolds satisfying some analytic conditions \cite{S1}; one generalizes Yang--Mills system to other gauge theoretic systems, such as introducing Higgs fields or vortex fields via dimensional  reduction \cite{H1,S2,B}, introducing singularities for Hermitian--Einstein connection and parabolic structure on vector bundle \cite{TM,P}; introducing frame structure via  vacuum expectation value of the scalar fields in $N= 2$ vector multiplet \cite{D}; one changes the stability condition, typically relaxes  to semistability and approximate Hermitian--Einstein metric \cite{C1,C2,LZ}; one considers an analog of such correspondence in positive characteristic or mixed characteristic \cite{F,LSZ}.

In present paper, our considerations focus on  generalized K\"ahler manifold as the base manifold  and quiver bundle as the  gauge theoretic system. Generalized K\"ahler manifold was first discovered by Gates, Hull, and Ro\v{c}ek as the target space of $N=(2,2)$ sigma model \cite{Gates}, and then reformulated under the context of Hitchin's generalized complex geometry \cite{H2,G1} by Gualtieri \cite{G2}. There are abundant candidates  for generalized K\"ahler manifold, for example, all degenerate del Pezzo surfaces and all Hirzebruch surfaces admits non-trivial  generalized K\"ahler structures \cite{H3}. On the other hand, quiver bundle coming from quiver gauge theory consists of  a set of vector bundles and a set of morphisms between these bundles \cite{Pr1,Pr2}.

We will define the notion of holomorphic quiver bundles over a generalized K\"ahler manifold, and introduce suitable stability and good metric for them. We should be faced with some new features in our setting: such stability depends on several real parameters reflecting the  generalized K\"ahler structure on base manifold and quiver structure on gauge theoretic system, and such metric satisfies a series of mutually coupled equations. Then we prove certain set-theoretic  Hitchin--Kobayashi correspondence, namely we have the following main theorem which generalizes the results in \cite{Pr1,Hu}.
\begin{theorem}[= Theorem \ref{thm3.15}] Let $Q=(Q_0,Q_1)$ be a quiver, and $\mathcal{E}=(E,\phi)$ be an $I_\pm$-holomorphic $Q$-bundle over an  $n$-dimensional compact  generalized K\"{a}hler manifold $(X, I_+,I_-,g, b)$ such that $g$ is Gauduchon with respect to both $I_+$ and $I_-$, then  $\mathcal{E}$ is $(\alpha,\sigma,\tau)$-polystable if and only if $\mathcal{E}$ admits an  $(\alpha,\sigma,\tau)$-Hermitian--Einstein metric.
\end{theorem}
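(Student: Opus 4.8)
The proof naturally splits into the two implications, and I would dispatch the direction ``Hermitian--Einstein $\Rightarrow$ polystable'' first, since it is the more formal one. Assume $\mathcal{E}=(E,\phi)$ carries an $(\alpha,\sigma,\tau)$-Hermitian--Einstein metric $h$. For any $\phi$-invariant saturated subobject $\mathcal{F}=(F,\phi|_F)\subset\mathcal{E}$, i.e. a subsheaf that is simultaneously $I_+$- and $I_-$-holomorphic and compatible with the arrow maps, I would run a Chern--Weil computation: restrict $h$ to $F$ off the singular locus, express the second fundamental form of the inclusion, and integrate the resulting pointwise identity against $\omega_+^{n-1}$ and $\omega_-^{n-1}$. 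The hypothesis that $g$ is Gauduchon with respect to both $I_+$ and $I_-$ is precisely what makes the degree well defined and the integration by parts legitimate, so that the two contraction terms $\Lambda_\pm F$ reproduce $\deg\mathcal{F}$ while the morphism contributions built from $\phi$ enter nonnegatively. This yields $\mu_{\alpha,\sigma,\tau}(\mathcal{F})\le\mu_{\alpha,\sigma,\tau}(\mathcal{E})$, with equality exactly when the second fundamental form vanishes, i.e. when $\mathcal{F}$ is a holomorphic orthogonal direct summand respecting $\phi$. Iterating this over a maximal such decomposition exhibits $\mathcal{E}$ as a direct sum of stable pieces of equal normalized slope, which is polystability.

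For the converse, ``polystable $\Rightarrow$ Hermitian--Einstein'', I would first reduce to the stable case: a polystable $\mathcal{E}$ is by definition an orthogonal direct sum of stable quiver bundles of equal normalized slope, so once each stable summand carries a Hermitian--Einstein metric with the common Einstein constant, the direct sum of these metrics solves the system on $\mathcal{E}$. Thus assume $\mathcal{E}$ stable. The metric is then sought as a solution of the coupled nonlinear system consisting of one equation at each vertex $v\in Q_0$, relating the mean curvatures $\Lambda_+ F(E_v)$ and $\Lambda_- F(E_v)$, the curvature-type terms $\sum\phi_a\phi_a^{\ast}-\sum\phi_a^{\ast}\phi_a$ over arrows incident to $v$, and the constants $(\alpha,\sigma,\tau)$. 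I would solve this by the continuity/perturbation method of Uhlenbeck--Yau, as adapted to the Gauduchon setting in \cite{LY} and to quivers in \cite{Pr1,Pr2}: introduce a parameter $\varepsilon\in(0,1]$, replace the right-hand side by an $\varepsilon$-perturbed term for which solvability at the initial value of $\varepsilon$ is immediate and openness in $\varepsilon$ follows from the invertibility of the linearized operator, thereby reducing everything to the closedness step as $\varepsilon\to 0$.

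The crux of the whole argument is that closedness step, namely obtaining $C^0$ (and thence higher) a priori estimates for $s_\varepsilon=\log(h_\varepsilon h_0^{-1})$ that stay uniform as $\varepsilon\to0$. I would pursue these through the integral and maximum-principle estimates of Donaldson--Simpson type, where the terms coming from $\phi$ are controlled because they appear with a favourable sign in the relevant Bochner/Weitzenb\"ock identity. The essential dichotomy is then that either $\sup_X|s_\varepsilon|$ remains bounded, in which case a limit produces the desired Hermitian--Einstein metric, or it diverges; the latter case is the step I expect to be the main obstacle. Here one must extract from the suitably rescaled limit a weakly holomorphic subbundle, invoke the Uhlenbeck--Yau regularity theorem to realize it as a genuine coherent subsheaf, verify that this subsheaf is \emph{simultaneously} $I_+$- and $I_-$-holomorphic and $\phi$-invariant so that it is an honest subobject of the quiver bundle, and finally check that it violates the slope bound. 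Such a destabilizing subobject would contradict stability, forcing the bounded alternative and completing the proof. The twin complex structures $I_\pm$ and the need to carry both Gauduchon contractions $\Lambda_\pm$ through every estimate add substantial bookkeeping but do not alter this logical skeleton.
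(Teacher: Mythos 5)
Your proposal is correct and follows essentially the same route as the paper: the forward direction is the same Chern--Weil/second-fundamental-form computation (the paper's Lemma \ref{lem3.1}), and the converse is the same $\varepsilon$-perturbed continuity method, with openness from the invertibility of the linearized operator, uniform estimates in the bounded case, and, in the divergent case, extraction of a weakly holomorphic destabilizing $Q$-subsheaf via Uhlenbeck--Yau regularity contradicting stability (the paper's Propositions \ref{prop3.4}--\ref{prop3.13} and Lemmas \ref{lem3.9}, \ref{lem3.14}). The only cosmetic difference is that you state the reduction from polystable to stable summands explicitly, whereas the paper leaves it implicit.
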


 More related questions are proposed. Since a type  of interesting generalized K\"ahler manifolds, so-called generalized Calibi--Yau manifolds appear in compactification of Type II string theory, must be non-compact, we need generalize  such correspondence to the non-compact case.  The parameters in the definition of stability form a parameter space of stability conditions which is partitioned into chambers, studying the wall-crossing on this space is also an interesting topic, maybe the Hitchin--Kobayashi correspondence can play some role.

\section{Setups}
In this paper, a generalized K\"ahler manifold  refers to the geometric object defined by the following two equivalent approaches.

\begin{definition}(\cite{G1})A  manifold $X$ is called a {\em generalized K\"ahler manifold} if it carries two generalized complex structures $J_1,J_2\in \mathrm{End}(TX\oplus T^*X)$ satisfying
\begin{itemize}
  \item $J_1J_2=J_2J_1$,
  \item the symmetric pairing $G(A,B)=\langle J_1(A),J_2(B)\rangle$ is positive-definite for any non-zero $A,B\in TX\oplus T^*X$, where $\langle\cdot,\cdot\rangle$ denotes the natural inner product on $TX\oplus T^*X$.
\end{itemize}
\end{definition}

\begin{definition}(\cite{G2})
A manifold $X$ is called a {\em generalized K\"ahler manifold} if it carries the data $(I_+,I_-,g,b)$, where
\begin{itemize}
  \item $I_{\pm}$ are two complex structures on $X$,
  \item $g$ is a Riemannian metric on $X$,
  \item $b$ is a two-form on $X$,
  \item $I_{\pm}$ are   parallel with respect to the connections $\nabla^{\pm}=\nabla\pm\frac{1}{2}g^{-1}\mathbb{H}$, respectively, where $\nabla$ is the Levi-Civita connection of $g$ and $\mathbb{H}=db$.
\end{itemize}
\end{definition}
 The  generalized Calabi--Yau manifold is an important  kind of generalized K\"ahler manifold.
\begin{definition}(\cite{Z})A {\em generalized Calabi--Yau manifold} is a generalized K\"ahler manifold $(X,J_1,J_2)$ such that both nowhere vanishing pure spinors $\psi_1,\psi_2$ corresponding to $J_1, J_2$, respectively satisfy the following conditions
 \begin{itemize}
   \item $d\psi_1=d\psi_2=0$,
   \item $(\psi_1,\bar\psi_1)=(\psi_2,\bar\psi_2)$,
 \end{itemize}
where  $(\bullet,\bullet)$ is the Mukai pairing.

\begin{remark}
More generally, one  defines the twisted  generalized K\"ahler manifold as the manifold $X$ with  4-tuple $(I_+,I_-,g,\mathbb{H})$, where $I_{\pm},g$ is the same as above, and $\mathbb{H}$ is a closed 3-form such that $I_{\pm}$ are parallel with respect to the connections $\nabla^{\pm}=\nabla\pm\frac{1}{2}g^{-1}\mathbb{H}$, respectively.  Similarly, one can also introduce the twisted generalized Calabi--Yau manifold by replacing the first condition on pure spinors  by $d_\mathbb{H}\psi_1=d_\mathbb{H}\psi_2=0$ for $d_\mathbb{H}=d+\mathbb{H}\wedge $. When $\mathbb{H}$ is an exact 3-form, they reduce to  the generalized K\"ahler manifold and generalized Calabi--Yau manifold defined as above.
\end{remark}

\end{definition}
\begin{definition}(\cite{Hu}) Let $(X,I_+,I_-,g,b)$ be a  generalized K\"ahler manifold, and $E$ be a complex vector bundle over $X$.
 $E$ is called {\em $I_{\pm}$-holomorphic} if there are two operators $\bar \partial _{\pm}:C^\infty (E)\rightarrow C^\infty(E\otimes T^{0,1}_{I_{\pm}}X)$ such that they define a holomorphic structure on $E$ with respect to $I_{\pm}$ respectively.
\end{definition}
Given an $I_{\pm}$-holomorphic vector bundle $(E, \bar\partial_{+},\bar\partial_-)$, denote by $\iota$ the natural isomorphisms between $\bar L_{\pm}$ and $T^{0,1}_{I_{\pm}}X$, one defines $\bar D_{\pm, s}(v):=\bar \partial _{\pm,\iota(s)}(v)$ for $s\in C^\infty(\bar L_{\pm})$ and $v\in C^\infty(E)$, where $L_{+}=L_1\cap L_2$, $L_-=L_1\cap \bar L_2$ with $L_1$, $L_2$
be $\sqrt {-1}$-eigensubbundles of $(TX\oplus T^*X)\otimes\mathbb{C}$ with respect to the generalized complex structure $J_{1,2}$ determined by
$$J_{1,2}=\frac{1}{2}\left(
            \begin{array}{cc}
              I_+\pm I_- & -(\omega^{-1}_+\mp\omega_-^{-1}) \\
             \omega_+\mp\omega_- & -(I_+^*\pm I_-^*) \\
            \end{array}
          \right)
$$
for K\"ahler forms $\omega_{\pm}=g(I_{\pm}\cdot,\cdot)$. Then $\bar D=\bar D_++\bar D_-: C^\infty(E)\rightarrow C^\infty(E\otimes \bar L_1)$ defines a generalized holomorphic bundle with respect to $J_1$ if and only if $\bar \partial_+\bar\partial_-+\bar \partial_-\bar\partial_+=0$ \cite{Hu}.

Moreover, we  make the following assumptions on the $n$-dimensional  generalized K\"ahler manifold $(X, I_+,I_-,g, b)$ in this paper:
\begin{itemize}
\item$g$ is Gauduchon, i.e., $dd^c_{\pm}\omega_{\pm}^{n-1}=0$ and $d\mathrm{Vol}_g=\frac{1}{n!}\omega_{\pm}^n$, where $d^c_{\pm}=I_{\pm}\circ d\circ I_{\pm}$;
\item $X$ is compact.
\end{itemize}

The first assumption is not too restrictive. It can be satisfied for generalized K\"ahler 4-manifolds automatically, and for real compact Lie groups. On the second assumption, we have the following no-go type theorem.

\begin{proposition}
\begin{enumerate}
\item[(1)] A compact twisted generalized K\"{a}hler surface has even first Betti number if $\mathbb{H}$ is exact, and has odd first Betti number if $\mathbb{H}$ is not exact.
 \item[(2)] A compact twisted generalized Calabi--Yau manifold must be a usual Calabi--Yau manifold.
                   \end{enumerate}
\end{proposition}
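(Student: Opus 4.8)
The plan is to reduce both statements to the classical structure theory of compact complex surfaces together with the interpretation of $\mathbb{H}$ as the common Bismut torsion of the two Hermitian structures $(I_\pm,g)$. The one analytic input I will use throughout is the theorem that a compact complex surface admits a K\"ahler metric if and only if its first Betti number is even (equivalently $h^{0,1}=h^{1,0}$), reinforced by the $\partial\bar\partial$-lemma on surfaces of K\"ahler type. For the generalized Calabi--Yau statement the extra mechanism is that $d_{\mathbb{H}}$-closedness of the pure spinors rigidifies the twisting and trivializes the canonical bundle.

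For (1), I would first record the integrability identity
\[
\mathbb{H}=d^c_-\omega_-=-d^c_+\omega_+
\]
(valid up to the sign conventions of \cite{G2}), which is exactly the parallelism of $I_\pm$ under $\nabla^\pm=\nabla\pm\frac{1}{2}g^{-1}\mathbb{H}$; in particular the Gauduchon condition $dd^c_\pm\omega_\pm=0$ is precisely $d\mathbb{H}=0$, so $[\mathbb{H}]\in H^3(X,\mathbb{R})$ is well defined. If $(X,I_+)$ is of K\"ahler type, then $\mathbb{H}=-d^c_+\omega_+$ is $d$-closed and visibly $d^c_+$-exact, so by the $\partial\bar\partial$-lemma it is $dd^c_+$-exact and in particular $d$-exact, i.e. $[\mathbb{H}]=0$; contrapositively, $\mathbb{H}$ not exact forces a non-K\"ahler surface and hence odd $b_1$. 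For the converse I would write $d\omega_+=\theta_+\wedge\omega_+$ for the Lee form and use the surface identity $d^c_+\omega_+=(I_+\theta_+)\wedge\omega_+$; since wedging with the nondegenerate $\omega_+$ is a pointwise isomorphism $\Omega^1\to\Omega^3$, the vanishing of $[\mathbb{H}]$ translates into the cohomological triviality of the Lee form (coclosed by the Gauduchon condition), which by the Hodge-theoretic classification of surfaces forces $(X,I_+)$ to be of K\"ahler type and hence $b_1$ even.

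For (2), the generalized Calabi--Yau conditions should force the twisting to be exact. On the locus where $J_2$ is of symplectic type its pure spinor takes the form $\psi_2=e^{b+i\omega}$, and the closedness $d_{\mathbb{H}}\psi_2=(db+\mathbb{H}+i\,d\omega)\wedge e^{b+i\omega}=0$ yields simultaneously $d\omega=0$ (so $g$ is K\"ahler) and $\mathbb{H}=-db$ (so the twisting is exact). With $\mathbb{H}$ exact, the complex-type factor carried by $J_1$ becomes a $d$-closed nowhere-vanishing $(n,0)$-form, i.e. a holomorphic trivialization of the canonical bundle, while the Mukai-pairing normalization $(\psi_1,\bar\psi_1)=(\psi_2,\bar\psi_2)$ is exactly the Calabi--Yau volume identity $\Omega\wedge\bar\Omega\propto\omega^n$. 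Thus $X$ is compact K\"ahler with trivial canonical bundle, i.e. an ordinary Calabi--Yau manifold.

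The main obstacle is the converse in (1): passing from the cohomological condition $[\mathbb{H}]=0$ to K\"ahler type requires controlling the Lee form through the Lefschetz isomorphism and the Gauduchon (coclosedness) condition, and ultimately the surface-specific Hodge theory separating the two parities of $b_1$. In (2) the subtlety I expect is that the type of $J_1,J_2$ may jump along the loci where $I_+=\pm I_-$, so the clean symplectic/complex normal forms for the pure spinors are only available on an open dense set; promoting the ``symplectic factor'' argument to a global statement---so that $d\omega=0$ and $\mathbb{H}=-db$ hold everywhere---is precisely where compactness and the nowhere-vanishing hypothesis on $\psi_1,\psi_2$ become essential.
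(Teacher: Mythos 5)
Both halves of your proposal contain genuine gaps, and in both cases the gap sits exactly where the paper takes a different (or no) route.

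For part (1), your forward direction is fine: if $(X,I_+)$ is of K\"ahler type, the $\partial\bar\partial$-lemma makes $\mathbb{H}=-d^c_+\omega_+$ exact, so non-exact $\mathbb{H}$ forces odd $b_1$. But the converse --- from $[\mathbb{H}]=0$ to K\"ahler type --- is the substantive half, and your sketch asserts rather than proves it. Exactness of $\mathbb{H}=(I_+\theta_+)\wedge\omega_+$ does not directly give exactness (or even closedness) of the Lee form $\theta_+$: writing $\mathbb{H}=d\beta$ and inverting the wedge isomorphism $\Omega^1\to\Omega^3$ produces a $1$-form with no evident cohomological meaning, and even an exact Lee form does not by itself yield K\"ahler type (that requires genuinely surface-specific arguments). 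What you are attempting here is a re-proof of the Apostolov--Gualtieri theorem; the paper does not attempt this at all, it simply cites \cite{AG}. So your part (1) is incomplete precisely at the step that would be needed to go beyond citation.

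For part (2), your argument is pinned to the normal form $\psi_2=e^{b+i\omega}$, which exists only where $J_2$ is of symplectic type. In a generalized Calabi--Yau (metric) structure neither $J_1$ nor $J_2$ need be of symplectic type at \emph{any} point --- the type can be positive everywhere and can jump --- so the locus on which your computation runs may be empty, and even where it applies you obtain only $\mathbb{H}=-db$ (exactness), not the vanishing of $\mathbb{H}$. You flag this yourself, but flagging the gap does not close it. The paper's proof is structured to avoid type considerations entirely: the $SU_+(n)\times SU_-(n)$ reduction produces two globally defined (ordinary, not pure) spinors $\xi_\pm$ satisfying $(\nabla_M\pm\frac14 M\lrcorner\mathbb{H})\cdot\xi_\pm=0$ and $(df\pm\frac12\mathbb{H})\cdot\xi_\pm=0$ as in \cite{W}; these imply the supergravity equations of motion of \cite{Hs}, whose trace gives $g^{\mu\nu}\nabla_\mu\nabla_\nu e^{-2f}=\frac16 e^{-2f}|\mathbb{H}|^2_g$, and integrating this over compact $X$ kills the Laplacian term and forces $\mathbb{H}\equiv 0$ pointwise. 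This integration (no-go) argument is global, insensitive to the types of $J_1,J_2$, and yields the stronger pointwise conclusion that your normal-form computation cannot reach. To repair your approach you would have to show that the symplectic-type locus is nonempty and dense and then propagate the conclusion across the type-jumping locus --- none of which is automatic --- whereas the spinorial route bypasses the issue from the start.
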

\begin{proof} (Sketch) The first result has been  proved by the authors of \cite{AG}. We only prove the second claim. The structure of generalized Calabi--Yau reduces the structure group $O(2n,2n)$ of $TX\oplus T^*X$ to $SU_+(n)\times SU_-(n)$, then  there are two globally defined $SU_\pm(n)$-invariant spinors $\xi_\pm$. The  constraints on pure spinors  can be rewritten in terms of $\xi_\pm$ \cite{W}
\begin{align*}
 (\nabla_M\pm \frac{1}{4} M\lrcorner \mathbb{H})\cdot\xi_{\pm}=&0,\\
 (df\pm \frac{1}{2}\mathbb{H})\cdot\xi_{\pm}=&0
\end{align*}
 for  $ \forall M\in C^\infty(TX)$, exact three-form $\mathbb{H}=db$ and smooth function $f=\log\frac{1}{(\psi_1,\bar\psi_1)}$, where  $\nabla$ denotes the spin connection with respect to $g$, and $\cdot$ stands for the Clifford multiplication. We only need to show if $X$ is compact then $\mathbb{H}$ vanishes. Indeed, the following equations are derived from the above conditions \cite{Hs}
 \begin{align*}
   R^{(g)}_{\mu\nu}-\frac{1}{4}\mathbb{H}_{\mu\alpha\beta}\mathbb{H}_{\nu\gamma\delta}g^{\alpha\delta}g^{\beta\delta}+2\nabla_\mu\nabla_\nu f&=0,\\
   g^{\mu\alpha}\nabla_\mu(e^{-2f}\mathbb{H}_{\alpha\beta\gamma})&=0,\\
   R^{(g)}+4g^{\mu\nu}\nabla_\mu\nabla_\nu f-4g^{\mu\nu}\nabla_\mu\nabla_\nu f-\frac{1}{12}g^{\mu\alpha}g^{\nu\beta}g^{\lambda\gamma}\mathbb{H}_{\mu\nu\lambda}\mathbb{H}_{\alpha\beta\gamma}&=0.
 \end{align*}
 After taking trace  we get $g^{\mu\nu}\nabla_\mu\nabla_\nu e^{-2f}-\frac{1}{6}e^{-2f}g^{\mu\alpha}g^{\nu\beta}g^{\lambda\gamma}\mathbb{H}_{\mu\nu\lambda}\mathbb{H}_{\alpha\beta\gamma}=0$, then integrating over $X$ implies the vanishing of  $\mathbb{H}$ if $X$ is compact.
\end{proof}

Now let $(E,\bar{\partial}_+,\bar{\partial}_-)$ be an $I_\pm$-holomorphic bundle over a generalized K\"ahler manifold $X$, fix a Hermitian metric $H$ on $E$, then there is a unique Chern-connection compatible with  the complex structures $I_\pm$ respectively,  given by
  $D_H^\pm: =\partial_H^\pm+\overline{\partial}_\pm$,
whose curvature form is denoted by $\mathbb{F}_H^\pm$. Then we define
the  degrees associated to the two Chern connections as follows:
\begin{equation*}
  \deg_\pm(E):=\frac{\sqrt{-1}}{2\pi}\int_X\Tr(\mathbb{F}_H^\pm)\wedge\omega_\pm^{n-1},
\end{equation*}
which are independent of the choice of Hermitian metric $H$ on $E$, since for any two Hermitian metrics $H$ and $H^\prime$ on $E$, we have
$\Tr(\mathbb{F}^\pm_H)=\Tr(\mathbb{F}_{H^\prime}^\pm)+\partial_\pm\overline{\partial}_\pm(\log\det((H^\prime)^{-1}H))$.

\begin{definition}
\begin{enumerate}
\item[(1)] A {\em quiver} $Q=(Q_0,Q_1,h,t: Q_1\to Q_0)$ is a $4$-tuple, where
                   \begin{itemize}
                     \item $ Q_0$ and $Q_1$ are finite sets of vertices and arrows, respectively,
                     \item $h,t: Q_1\to Q_0$ map each arrow $a\in Q_1$ to its head $h(a)$ and tail $t(a)$, respectively.
                   \end{itemize}
\item[(2)] A {\em $Q$-sheaf} on a complex manifold $X$ is a pair $\mathcal{E}=(E,\phi)$, where $E=\{E_i\}_{i\in Q_0}$ is a collection of sheaves of $\mathcal{O}_X$-modules and $\phi=\{\phi_a\}_{a\in Q_1}$ a collection of morphisms $\phi_a:  E_{t(a)}\to E_{h(a)}$. In particular, if each $E_i$ is locally free, $\mathcal{E}$ is called a $Q$-bundle. A $Q$-subsheaf of $\mathcal{E}=(E,\phi)$ is a $Q$-sheaf $\mathcal{E}^\prime=(E^\prime,\phi^\prime)$ such that $E^\prime_i$ is a subsheaf of $E_i$ for each vertex $i$ and $\phi^\prime_a=\phi_a|_{E_{h(a)}}$ for each arrow $a$.
\item[(3)] A {\em morphism $f: \mathcal{E}\to \mathcal{F}$ between two  $Q$-sheaves} $\mathcal{E}=(E,\phi)$ and $\mathcal{F}=(F,\varphi)$ is a collection of  morphisms $f_i: E_i\to F_i$ such that for each arrow $a\in Q_1$, the following diagram commutes:
$$\begin{CD}
  E_{t(a)} @>\phi_a>> E_{h(a)} \\
  @V  f_{t(a)} VV @V f_{h(a)} VV  \\
  F_{t(a)} @>\varphi_a>> F_{h(a)}.
\end{CD} $$
\item[(4)] A {\em Hermitian metric} on a $Q$-bundle $\mathcal{E}=(E,\phi)$ is a collection $H=\{H_i\}_{i\in Q_0}$ of Hermitian metrics $H_i$ on $E_i$.  For each arrow $a\in Q_1$, by virtue of the Hermitian metrics at tail and head, the morphism $\phi_a$ has a smooth adjoint $\phi_a^{*H}: E_{h(a)}\to E_{t(a)}$ with respect to the Hermitian metrics at tail and head, that is, $H_{h(a)}(\phi_a(u), v)=H_{t(a)}(u,\phi_a^{*H}(v))$ for any sections $u,v$ of $E_{h(a)},E_{t(a)}$.
   \item[(5)]  A {\em $Q$-bundle $\mathcal{E}=(E,\phi)$    on a generalized K\"ahler manifold} $(X, I_+,I_-,g, b)$ is called $I_{\pm}$-holomorphic if
   \begin{itemize}
     \item each $E_i$, $i\in Q_0$, is an $I_{\pm}$-holomorphic bundle, i.e., $E_i$ carries two holomorphic structures $\bar{\partial}_{+i},\bar{\partial}_{-i}$ with respect to $I_{\pm}$, respectively,
     \item each $\phi_a$, $a\in Q_1$, is $I_{\pm}$-holomorphic, namely
     \begin{equation*}
  \left\{\begin{split}
    \phi_a\circ\bar{\partial}_{+t(a)}=\bar{\partial}_{+h(a)}\circ\phi_a,\\
    \phi_a\circ\bar{\partial}_{-t(a)}=\bar{\partial}_{-h(a)}\circ\phi_a.
  \end{split}\right.
\end{equation*}
   \end{itemize}
   \item[(6)] A {\em morphism  $f: \mathcal{E}\to \mathcal{F}$ between two  $I_{\pm}$-holomorphic $Q$-bundles} $\mathcal{E}=(E,\phi)$ and $\mathcal{F}=(F,\varphi)$ is a collection of $I_\pm$-holomorphic morphisms $f_i: (E_i,\bar{\partial}_{i+},\bar{\partial}_{i-})\to (F_i,\bar{\partial}_{+i}',\bar{\partial}_{-i}')$, such that for each arrow $a\in Q_1$, the following diagram commutes:
$$\begin{CD}
  (E_{t(a)},\bar{\partial}_{+t(a)},\bar{\partial}_{-t(a)}) @>\phi_a>> (E_{h(a)},\bar{\partial}_{+h(a)},\bar{\partial}_{-h(a)}) \\
  @V f_{t(a)} VV @V f_{h(a)} VV  \\
  (F_{t(a)},\bar{\partial}_{+t(a)}',\bar{\partial}_{-t(a)}') @>\varphi_a>> (F_{h(a)},\overline{\partial}_{+t(a)}',\bar{\partial}_{-t(a)}').
\end{CD} $$
\item[(7)]  An $I_\pm$-holomorphic $Q$-bundle $\mathcal{E}=(E,\phi)$ is said to be {\em simple} if any endomorphism $f: \mathcal{E}\rightarrow \mathcal{E}$ must have the form  $f=\{c \mathrm{Id}_{E_i}\}_{i\in Q_0}$ for a constant $c\in\mathbb{C}$.
     \end{enumerate}
 \end{definition}

\begin{definition}(\cite{Pr1,Hu}) Let $\mathcal{E}=(E,\phi)$ be an  $I_{\pm}$-holomorphic $Q$-bundle.
 \begin{enumerate}
  \item[(1)] A {\em coherent $Q$-subsheaf} $\mathcal{F}$ of  $\mathcal{E}$ is a 4-tuple $\mathcal{F}=(\mathcal{F}_+,\mathcal{F}_-, \mathbb{S}_+,\mathbb{S}_-)$, where
        \begin{itemize}
          \item $\mathcal{F}_\pm=(F_\pm,\varphi)$ are $Q$-subsheaves of the $Q$-sheaves $\mathcal{E}_\pm$, where $\mathcal{E}_\pm=(E_\pm=\{E_{\pm i}=(E_i,\bar\partial_{\pm i})\}_{i\in Q_0},\phi=\{\phi_a\}_{a\in Q_1})$, respectively,
          \item $\mathbb{S}_\pm=\{S_{\pm i}\}_{i\in Q_0}$  are collections of  analytic subsets of $(X,I_\pm)$,respectively, such that for each $i\in Q_0$
          \begin{itemize}
            \item $S_i=S_{+i}\cup S_{-i}$ has codimension at least 2,
            \item $F_{\pm i}|_{X\backslash S_{\pm i}}$ are locally free and $F_{+i}|_{X\backslash S_i}=F_{- i}|_{X\backslash S_i}:=F_i$ as smooth vector bundles.
          \end{itemize}\end{itemize}
   \item[(2)] For any  coherent subsheaf $\mathcal{F}$ of   $\mathcal{E}$, we define
 {\em $(\alpha,\sigma,\tau)$-degree} and {\em $(\alpha,\sigma,\tau)$-slope}  as follows:
\begin{align*}
  \deg_{\alpha,\sigma,\tau}(\mathcal{F}):=&\sum\limits_{i\in Q_0}\alpha_i\sigma_i\deg_+({F_{+i}})+\sum\limits_{i\in Q_0}(1-\alpha_i)\sigma_i\deg_-({F_{-i}})-\sum_{i\in Q_0}\tau_i \rk(F_i),\\
\mu_{\alpha,\sigma,\tau}(\mathcal{F}):=&\frac{\sum\limits_{i\in Q_0}\alpha_i\sigma_i\deg_+({F_{+i}})}{\sum\limits_{i\in Q_0}\sigma_i\rk({F_{i}})}+\frac{\sum\limits_{i\in Q_0}(1-\alpha_i)\sigma_i\deg_-({F_{-i}})}{\sum\limits_{i\in Q_0}\sigma_i\rk({F_{i}})}-\frac{\sum\limits_{i\in Q_0}\tau_i\rk({F_{i}})}{\sum\limits_{i\in Q_0}\sigma_i\rk({F_{i}})},
\end{align*}
where $\alpha_i\in(0,1),\sigma_i\in \mathbb{R}^+,\tau_i\in\mathbb{R}$, and $\rk(F_i)=\rk(F_{+i})=\rk(F_{-i})$ denotes the rank of the corresponding sheaves.
 $\mathcal{E}$ is called {\em $(\alpha,\sigma,\tau)$-stable} (respectively, {\em $(\alpha,\sigma,\tau)$-semistable}) if for any proper coherent $Q$-subsheaf $\mathcal{F}$, we have
$\mu_{\alpha,\sigma,\tau}(\mathcal{F})<\mu_{\alpha,\sigma,\tau}(\mathcal{E})$ (respectively, $\mu_{\alpha,\sigma,\tau}(\mathcal{F})\leq
\mu_{\alpha,\sigma,\tau}(\mathcal{E}))$,
and $\mathcal{E}$ is called {\em polystable} if it is the the direct sum of $(\alpha,\sigma,\tau)$-stable $Q$-subsheaves of the same slope with $\mathcal{E}$.
 \end{enumerate}
 \end{definition}
Due to the classical extension theorem \cite{BA}, we have the following extension theorem for the coherent $Q$-subsheaves.

\begin{proposition}\label{prop2.8}
For each $i\in Q_0$, there are unique holomorphic bundles $\hat F_{\pm i}$ over $(X, I_\pm)$ extending  the bundles $F_{\pm i}|_{X\backslash S_{\pm i}}$, respectively, hence there is a unique $I_\pm$-bundle $(\hat F_i,\hat{\bar \partial}_+,\hat{\bar\partial}_-)$ over  $(X, I_\pm)$ extending the $I_\pm$-bundle $(F_i,\bar\partial_+,\bar\partial_-)$ over $X\backslash S_i$.
 \end{proposition}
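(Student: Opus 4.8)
The plan is to treat each vertex $i\in Q_0$ and each of the two complex structures separately: first construct the bundle extensions $\hat F_{\pm i}$ by means of the cited extension theorem, then establish their uniqueness by a Hartogs-type continuation argument, and finally glue the two extensions into a single $I_\pm$-holomorphic bundle over $X$.

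I would begin by recording the codimension bound that makes the extension theorem applicable. By the definition of a coherent $Q$-subsheaf, $S_i=S_{+i}\cup S_{-i}$ has codimension at least $2$ in $X$; since $S_{\pm i}\subseteq S_i$, each $S_{\pm i}$ is an analytic subset of $(X,I_\pm)$ of codimension at least $2$, and on $U_\pm:=X\setminus S_{\pm i}$ the sheaf $F_{\pm i}$ is locally free, i.e.\ a holomorphic vector bundle with respect to $I_\pm$. Writing $j_\pm\colon U_\pm\hookrightarrow X$ for the inclusions, I would set $\hat F_{\pm i}:=(F_{\pm i})^{**}\cong j_{\pm*}\big(F_{\pm i}|_{U_\pm}\big)$, the reflexive hull of the coherent subsheaf $F_{\pm i}\subseteq E_{\pm i}$. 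The classical extension theorem \cite{BA}, applied across the codimension-$\ge 2$ set $S_{\pm i}$, then guarantees that $\hat F_{\pm i}$ is locally free, hence a holomorphic bundle over $(X,I_\pm)$ whose restriction to $U_\pm$ is $F_{\pm i}|_{U_\pm}$; moreover the inclusion $F_{\pm i}\hookrightarrow E_{\pm i}$ passes to the reflexive hulls and realizes $\hat F_{\pm i}$ as a subsheaf of the ambient bundle $E_{\pm i}$.

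For uniqueness I would argue as follows. If $\hat F'_{\pm i}$ is a second holomorphic bundle over $(X,I_\pm)$ with $\hat F'_{\pm i}|_{U_\pm}=F_{\pm i}|_{U_\pm}$, then the identity over $U_\pm$ is a holomorphic section of the locally free sheaf $\mathcal{H}om(\hat F_{\pm i},\hat F'_{\pm i})$ over $U_\pm$. As $S_{\pm i}$ has codimension at least $2$, the second Riemann extension theorem extends this section, together with its inverse, across $S_{\pm i}$; the two extensions compose to the identity on the dense open set $U_\pm$ and therefore on all of $X$, giving an isomorphism $\hat F_{\pm i}\cong\hat F'_{\pm i}$. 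This proves the first assertion of the proposition.

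It remains to assemble $(\hat F_i,\hat{\bar\partial}_+,\hat{\bar\partial}_-)$. On $U:=X\setminus S_i=U_+\cap U_-$ the defining condition $F_{+i}|_{X\setminus S_i}=F_{-i}|_{X\setminus S_i}=:F_i$ identifies the underlying smooth bundles of $\hat F_{+i}$ and $\hat F_{-i}$ over $U$. I would extend this tautological identification across $S_i$ to a global smooth bundle isomorphism, take $\hat F_i$ to be the resulting common smooth bundle, and equip it with $\hat{\bar\partial}_+$ from $\hat F_{+i}$ and $\hat{\bar\partial}_-$ transported from $\hat F_{-i}$; these are holomorphic structures with respect to $I_+$ and $I_-$ extending $\bar\partial_+$ and $\bar\partial_-$, and uniqueness follows from the uniqueness of each $\hat F_{\pm i}$ established above. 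The step I expect to be the main obstacle is precisely this final identification of the two smooth extensions over the singular set: one must verify that the smooth identity over $U$ extends to an isomorphism of the underlying smooth bundles of $\hat F_{+i}$ and $\hat F_{-i}$ over all of $X$. I would justify it using that $S_i$ has real codimension at least $4$, so that the obstructions to extending the smooth identity are confined to a set small enough to be resolved; one concrete route is to exploit that both $\hat F_{\pm i}$ are obtained by saturating the common smooth bundle $F_i\subseteq E_i$ and to check directly that the two saturations share the same underlying smooth bundle over $X$. Making this last point precise, rather than the sheaf-theoretic extensions themselves, is the delicate part of the argument.
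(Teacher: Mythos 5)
The paper itself offers no argument for this proposition beyond the one-line appeal to the extension theorem of \cite{BA}, so what must be judged is whether your sketch is a valid way of cashing in that citation. It is not: the central step fails. You set $\hat F_{\pm i}:=(F_{\pm i})^{**}$ and assert that \cite{BA}, ``applied across the codimension-$\geq 2$ set $S_{\pm i}$,'' guarantees local freeness of this reflexive hull. No theorem of that form can exist once $\dim_{\mathbb{C}}X\geq 3$. Take $X=\mathbb{C}^3$ and $K=\ker\big(\mathcal{O}^{\oplus 3}\to\mathcal{O},\ (f_1,f_2,f_3)\mapsto \sum_j z_jf_j\big)$: this is a coherent subsheaf of a trivial bundle, it is reflexive ($K=K^{**}$, being a second syzygy), and it is locally free outside the origin, where its restriction is the pullback of $\Omega^1_{\mathbb{P}^2}(1)$; yet $K$ is not locally free at $0$ (otherwise $\mathcal{O}/\mathfrak{m}$ would have projective dimension $\leq 2$), and \emph{no} locally free extension of $K|_{\mathbb{C}^3\setminus\{0\}}$ exists at all, since any such extension would be reflexive, would agree with $K$ off a codimension-$3$ set, and hence would be isomorphic to $K$ by normality of reflexive sheaves. (Consistently with this, the bundle is even topologically nontrivial on the punctured neighborhoods of $0$: a rank-$2$ extension to $\mathbb{P}^3$ is obstructed by the parity of $c_1c_2$.) So ``coherent on $X$ and locally free outside codimension $2$'' does not imply that the double dual is a bundle, and the cited theorem does not say so. Your construction is only valid when $n=2$, where reflexive sheaves on surfaces are automatically locally free; the proposition is stated for all $n$.

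The example also shows that the statement you are trying to prove is \emph{false} for a single coherent subsheaf, so any correct proof must use the defining property of a coherent $Q$-subsheaf that you never invoke in the construction, namely $F_{+i}|_{X\setminus S_i}=F_{-i}|_{X\setminus S_i}$ as smooth bundles. That hypothesis is exactly what feeds the Buchdahl--Harris criterion, which (roughly) extends a holomorphic bundle across a closed set of locally finite Hausdorff $(2n-4)$-measure precisely when the underlying smooth complex vector bundle extends (equivalently, is smoothly trivial on small punctured neighborhoods of the set): near a point of $S_{+i}\setminus S_{-i}$ the sheaf $F_{-i}$ \emph{is} locally free, so it supplies the smooth extension of the underlying bundle of $F_{+i}|_{X\setminus S_{+i}}$ that the criterion demands, and symmetrically with $+$ and $-$ exchanged; this is the route taken in \cite{Hu}, which the paper is following. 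Your proposal thus inverts where the difficulty lies: the ``routine sheaf-theoretic extension'' is the hard part, while your uniqueness argument (Hartogs for sections of $\mathcal{H}om$ across an analytic set of codimension $\geq 2$) is correct, and the smooth identification of the two extensions that you flag as the main obstacle is comparatively minor once both locally free extensions exist. Note finally that even the corrected route requires a separate discussion at points of $S_{+i}\cap S_{-i}$, where neither sheaf is known to be locally free and the smooth-agreement hypothesis gives no direct information; a complete write-up cannot pass over this silently.
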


The following facts are  analogs of classical cases \cite{Koba}.
\begin{proposition}\label{prop2.9} 
Let $f: \mathcal{E}\to\mathcal{F}$ be a morphism between two $I_\pm$-holomorphic $Q$-bundles $\mathcal{E}=(E,\phi)$ and $\mathcal{F}=(F,\varphi)$.
\begin{enumerate}
  \item[(1)]  If $\mathcal{E}$ and $\mathcal{F}$ are $(\alpha,\sigma,\tau)$-semistable, then
$\mu_{\alpha,\sigma,\tau}(\mathcal{E})\leq\mu_{\alpha,\sigma,\tau}(\mathcal{F})$.
  \item[(2)] If $\mathcal{E}$ and $\mathcal{F}$ are stable of the same $(\alpha,\sigma,\tau)$-slope, then $f$ is an isomorphism.
  \item[(3)] If $\mathcal{E}$ is $(\alpha,\sigma,\tau)$-stable, then it is simple.
\end{enumerate}
\end{proposition}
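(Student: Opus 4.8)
All three assertions are the generalized-K\"ahler, quiver-bundle versions of the classical Schur-type lemmas, so the plan is to port the arguments of \cite{Koba} into the present framework, the only new ingredient being that $\mu_{\alpha,\sigma,\tau}$ still behaves like a genuine slope. Concretely, I would first record that $\mu_{\alpha,\sigma,\tau}=\deg_{\alpha,\sigma,\tau}/r_\sigma$ with $r_\sigma(\mathcal{F}):=\sum_{i\in Q_0}\sigma_i\rk(F_i)>0$, and that both numerator and denominator are additive on short exact sequences of coherent $Q$-subsheaves --- additivity of $r_\sigma$ from that of ranks, and additivity of $\deg_{\alpha,\sigma,\tau}$ from that of each $\deg_\pm$, which in turn follows from the Chern--Weil formula together with the metric-independence of $\deg_\pm$ noted above. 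This gives the mediant inequality: in any $0\to\mathcal{K}\to\mathcal{E}\to\mathcal{Q}\to 0$ the value $\mu_{\alpha,\sigma,\tau}(\mathcal{E})$ lies between $\mu_{\alpha,\sigma,\tau}(\mathcal{K})$ and $\mu_{\alpha,\sigma,\tau}(\mathcal{Q})$.

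For (1) I would take $f\neq 0$ and build two coherent $Q$-subsheaves: the kernel $\mathcal{K}=\ker f\subseteq\mathcal{E}$ and the (saturated) image $\mathcal{I}=\operatorname{im}f\subseteq\mathcal{F}$. Because every $f_i$ is simultaneously $I_+$- and $I_-$-holomorphic and intertwines $\phi$ with $\varphi$, these kernels and images are $Q$-subsheaves for both holomorphic structures, are saturated (hence subbundles off a codimension-$\geq 2$ set) and agree as smooth bundles off $S_i$, so Proposition \ref{prop2.8} packages them into admissible $4$-tuples. From $0\to\mathcal{K}\to\mathcal{E}\to\mathcal{I}\to 0$ and $\mathcal{I}\hookrightarrow\mathcal{F}$, semistability of $\mathcal{E}$ gives $\mu(\mathcal{K})\leq\mu(\mathcal{E})$, whence $\mu(\mathcal{I})\geq\mu(\mathcal{E})$ by the mediant inequality, while semistability of $\mathcal{F}$ gives $\mu(\mathcal{I})\leq\mu(\mathcal{F})$; chaining these yields $\mu_{\alpha,\sigma,\tau}(\mathcal{E})\leq\mu_{\alpha,\sigma,\tau}(\mathcal{F})$.

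For (2) I would rerun this comparison with the strict inequalities furnished by stability. Were $\mathcal{K}$ a nonzero proper subsheaf, stability of $\mathcal{E}$ would give $\mu(\mathcal{K})<\mu(\mathcal{E})$, forcing $\mu(\mathcal{I})>\mu(\mathcal{E})=\mu(\mathcal{F})$ and contradicting $\mathcal{I}\subseteq\mathcal{F}$; hence $\mathcal{K}=0$, each $f_i$ is injective, and $\mu(\mathcal{I})=\mu(\mathcal{F})$. Stability of $\mathcal{F}$ then prevents $\mathcal{I}$ from being a proper subsheaf of full slope, so $\mathcal{I}$ has full rank at every vertex; combined with injectivity this gives $\rk(E_i)=\rk(F_i)$, hence $r_\sigma(\mathcal{E})=r_\sigma(\mathcal{F})$ and then $\deg_{\alpha,\sigma,\tau}(\mathcal{E})=\deg_{\alpha,\sigma,\tau}(\mathcal{F})$. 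Finally I would upgrade each generically invertible $f_i$ to an honest isomorphism: its non-invertible locus is the zero divisor of $\det f_i$, and a nonempty such divisor would force $\deg_\pm(F_i)>\deg_\pm(E_i)$, which (all $\alpha_i\sigma_i,(1-\alpha_i)\sigma_i>0$) would violate the just-established equality of $(\alpha,\sigma,\tau)$-degrees; thus every $\det f_i$ is nonvanishing and $f$ is an isomorphism of $Q$-bundles.

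Part (3) is then immediate from (2): given an endomorphism $f\colon\mathcal{E}\to\mathcal{E}$, I would fix a vertex $i_0$ with $E_{i_0}\neq 0$, a point $x_0\in X$, and an eigenvalue $c$ of $(f_{i_0})_{x_0}$; then $f-c\,\mathrm{Id}$ is an endomorphism of the stable bundle $\mathcal{E}$ that is not injective at $x_0$, hence not an isomorphism, and since by (2) any nonzero such endomorphism would be an isomorphism, $f-c\,\mathrm{Id}=0$, i.e.\ $f=\{c\,\mathrm{Id}_{E_i}\}_{i\in Q_0}$. The step I expect to be most delicate is not the inequalities themselves but the bookkeeping underlying them: checking that $\ker f$ and $\operatorname{im}f$ really define admissible coherent $Q$-subsheaves $(\mathcal{F}_+,\mathcal{F}_-,\mathbb{S}_+,\mathbb{S}_-)$ with $F_{+i}=F_{-i}$ off $S_i$, that the degrees feeding $\mu_{\alpha,\sigma,\tau}$ are those of the reflexive extensions of Proposition \ref{prop2.8} so that additivity of $\deg_\pm$ is legitimate, and --- in (2) --- that the passage from equality of reflexive degrees to a genuine bundle isomorphism via $\det f_i$ is sound.
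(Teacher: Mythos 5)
The paper gives no written proof of this proposition at all — it simply declares these statements to be ``analogs of classical cases'' and cites \cite{Koba} — and your argument is precisely the classical route that citation points to: additivity of degree and rank giving the mediant inequality, the kernel/image slope comparison for (1) and (2), the $\det f_i$ / effective-divisor-has-positive-Gauduchon-degree step to upgrade generic invertibility, and the eigenvalue Schur-lemma trick for (3), all correctly adapted to the quiver and $I_\pm$ bookkeeping (including the tacit assumption $f\neq 0$ needed in (1) and (2)). So your proposal is correct and takes essentially the same approach the paper intends.
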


\begin{definition}\label{def2.10}
A Hermitian metric $H$ on an $I_\pm$-holomorphic $Q$-bundle $\mathcal{E}=(E,\phi)$ is called
an  {\em $(\alpha,\sigma,\tau)$-Hermitian--Einstein metric} if for each vertex $i\in Q_0$ it satisfies the  following equations
\begin{align*}
 &\sqrt{-1}(\alpha_i\sigma_i\mathbb{F}^+_{H_i}\wedge \omega_+^{n-1}+(1-\alpha_i)\sigma_i\mathbb{F}^-_{H_i}\wedge \omega_-^{n-1})\\&+(n-1)!\left(\sum_{a\in h^{-1}(i)}\phi_a\circ \phi_a^{*H}-\sum_{a\in t^{-1}(i)} \phi_a^{*H}\circ\phi_a\right) \Vol_g\\
 =&(n-1)!\lambda(\tau_i+\gamma\sigma_i)\mathrm{Id}_{E_i}\Vol_g
\end{align*}
with constants $\lambda=\frac{2\pi}{(n-1)! \int_Xd\Vol_g}$ and $\gamma$.
\end{definition}

\begin{remark}
Taking trace and  the sum over all vertices and then doing integral over $X$ on  both sides, we see that $\gamma$ is exactly the slope $\mu_{(\alpha,\sigma,\tau)}(\mathcal{E})$.
\end{remark}

We employ the following notations:
\begin{itemize}
  \item $S(E_i, H_i)$ is the space of smooth $H_i$-Hermitian endomorphisms of $E_i$, $S^+(E_i, H_i)\subset S(E_i, H_i)$ is the open subset of positive-definite ones;
  \item $S(\mathcal{E},H)=\prod\limits_{i\in Q_0}S(E_i, H_i)$, $S^+(\mathcal{E},H)=\prod\limits_{i\in Q_0}S^+(E_i, H_i)$. The metric $H$ induces a metric on  $S(\mathcal{E},H)$, also denoted by $H$,  namely $\langle f,g\rangle_H=\sum_{i\in Q_0}\langle f_i,g_i\rangle_{H_i}$ for $f=(f_i)_{i\in Q_0},g=(g_i)_{i\in Q_0}\in S(\mathcal{E}, H)$.
      \item $L^p_k(S)$ denotes the corresponding Sobolev space.
      \item The pointwise  or global norms and inner products $|\bullet|, \langle\bullet,\bullet\rangle, ||\bullet||,\langle\langle\bullet,\bullet\rangle\rangle_{L^2}$ are defined with respect to the metrics $H_i$ or induced metric induced metrics on $E_{h(a)}\otimes (E_{t(a)})^*$ from the  metrics $H_{h(a)}$ and $H_{t(a)}$  unambiguously depending on the contexts.
\end{itemize}

\begin{proposition}\label{prop2.12}
Let  $H$ be an $(\alpha,\sigma,\tau)$-Hermitian--Einstein metric on an  $I_\pm$-holomorphic $Q$-bundle $\mathcal{E}=(E,\phi)$ over $X$, then we define
\begin{align*}
 C_1(E_i)=&\int_X\Tr (\sqrt{-1}(\alpha_i\Lambda_+ \mathbb{F}^+_{H_i}+(1-\alpha_i)\Lambda_-\mathbb{F}^-_{H_i}))d\Vol_g,\\
 C_2(E_i)=&\int_X\left(\Tr(( \alpha_i\mathbb{F}_{H_i}^+)^2)\wedge \frac{\omega^{n-2}_+}{(n-2)!}+\Tr(((1-\alpha_i) \mathbb{F}_{H_i}^-)^2)\wedge \frac{\omega^{n-2}_-}{(n-2)!}\right),\\
 C^\prime_2(E_i)=&\langle\langle\alpha_i\Lambda_+ \mathbb{F}^+_{H_i},(1-\alpha_i)\Lambda_- \mathbb{F}^-_{H_i}\rangle\rangle_{L^2},
\end{align*}
where $\Lambda_\pm$ is the adjoint of the operator of the wedge by $\omega_\pm$  with respect to the metric $g$.
When $\alpha_i=\alpha_j=\alpha$ for $\forall i,j\in Q_0$, the following inequality holds
\begin{align*}
  \sum_{i\in Q_0}\sigma_i(C_2(E_i)-C^\prime_2(E_i))+2\lambda\sum_{i\in Q_0}(\tau_i+\mu_{(\alpha,\sigma,\tau)}(\mathcal{E})\sigma_i)C_1(E_i)\geq0.
\end{align*}
\end{proposition}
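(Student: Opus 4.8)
The plan is to turn the asserted inequality into the integral of a (morally) non-negative density, by feeding the pointwise Chern--Weil identities into the Hermitian--Einstein equations of Definition \ref{def2.10}. Throughout write $\hat F^\pm_i:=\sqrt{-1}\Lambda_\pm\mathbb{F}^\pm_{H_i}$, so that $C_1(E_i)=\int_X\Tr(K_i)\,d\Vol_g$ with $K_i:=\alpha_i\hat F^+_i+(1-\alpha_i)\hat F^-_i$, and abbreviate the quiver endomorphism by $\Psi_i:=\sum_{a\in h^{-1}(i)}\phi_a\circ\phi_a^{*H}-\sum_{a\in t^{-1}(i)}\phi_a^{*H}\circ\phi_a$. \textbf{Step 1.} First I would record, for each vertex $i$ and each sign, the K\"ahler identity
\begin{align*}
\Tr(\mathbb{F}^\pm_{H_i}\wedge\mathbb{F}^\pm_{H_i})\wedge\frac{\omega_\pm^{n-2}}{(n-2)!}=\big(|\mathbb{F}^\pm_{H_i}|^2-|\hat F^\pm_i|^2\big)\,d\Vol_g,
\end{align*}
which comes from the primitive decomposition of $(1,1)$-forms and uses that $g$ is compatible with both $I_+$ and $I_-$. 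Multiplying by $\alpha_i^2$, resp.\ $(1-\alpha_i)^2$, exhibits $C_2(E_i)$ as an $L^2$-expression in the curvatures and the mean curvatures $\hat F^\pm_i$.

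\textbf{Step 2.} Next I would observe that the cross term $\langle\hat F^+_i,\hat F^-_i\rangle$ that arises when one completes $\alpha_i^2|\hat F^+_i|^2+(1-\alpha_i)^2|\hat F^-_i|^2$ to $|K_i|^2$ is, up to the factor $\alpha_i(1-\alpha_i)$, exactly the integrand of $C'_2(E_i)$; hence $C_2(E_i)-C'_2(E_i)$ collapses the three mean-curvature terms into the single square $|K_i|^2$, leaving
\begin{align*}
\sum_{i\in Q_0}\sigma_i\big(C_2(E_i)-C'_2(E_i)\big)=\sum_{i\in Q_0}\sigma_i\int_X\Big(\alpha_i^2|\mathbb{F}^+_{H_i}|^2+(1-\alpha_i)^2|\mathbb{F}^-_{H_i}|^2-|K_i|^2\Big)d\Vol_g.
\end{align*}
\textbf{Step 3.} I would then invoke the Hermitian--Einstein equations in the pointwise form $\sigma_iK_i+\Psi_i=\lambda(\tau_i+\gamma\sigma_i)\,\mathrm{Id}_{E_i}$ with $\gamma=\mu_{\alpha,\sigma,\tau}(\mathcal{E})$, so that the term $-\sigma_i|K_i|^2$ and the degree term $2\lambda(\tau_i+\gamma\sigma_i)C_1(E_i)$ combine to $-\sigma_i|K_i-\tfrac{\lambda(\tau_i+\gamma\sigma_i)}{\sigma_i}\mathrm{Id}_{E_i}|^2+\tfrac{(\lambda(\tau_i+\gamma\sigma_i))^2\rk(E_i)}{\sigma_i}$, where $K_i-\tfrac{\lambda(\tau_i+\gamma\sigma_i)}{\sigma_i}\mathrm{Id}_{E_i}=-\tfrac1{\sigma_i}\Psi_i$. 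This trades the mean-curvature data for the quiver endomorphisms $\Psi_i$ and leaves the curvature norms $|\mathbb{F}^\pm_{H_i}|^2$ alongside terms built from $\Psi_i$ and the Einstein constants.

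\textbf{Step 4 and the main difficulty.} What remains is to reorganize the resulting density into a sum of squares. Here I would use the hypothesis $\alpha_i=\alpha$ for all $i$: it makes the weights in $K_i$ vertex-independent, so that after expanding $\sum_i\langle K_i,\Psi_i\rangle$ and regrouping the two sums over $h^{-1}(i)$ and $t^{-1}(i)$ as a single sum over arrows $a\in Q_1$, the head and tail contributions pair up as $\sum_{a\in Q_1}\langle K_{h(a)}\phi_a-\phi_a K_{t(a)},\phi_a\rangle$, where the $I_\pm$-holomorphicity of each $\phi_a$ enters symmetrically in the two complex structures; compactness of $X$ lets one integrate and make sense of all the global $L^2$-pairings. \emph{The hard part} is exactly this last reorganization: since Step 3 produces the quiver contribution with an unfavourable sign ($-\tfrac1{\sigma_i}\|\Psi_i\|^2$) against the positive full-curvature norms, one must show that the curvature norms, the completed mean-curvature square, and the arrow cross terms assemble into a genuinely non-negative integrand --- morally the trace-free L\"ubke density $|\mathbb{F}^\pm_0|^2$ together with the squared norms of the quiver fields --- with all indefinite terms cancelling. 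I expect the sign bookkeeping across the two structures $I_\pm$ and the careful accounting of the arrow contributions to be the real obstacle, while the analytic ingredients (the pointwise identities and the integration) are routine.
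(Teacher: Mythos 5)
Your Steps 1--2 start out on the same track as the paper: the pointwise Lefschetz identity trading $\|\Lambda_\pm\mathbb{F}^\pm_{H_i}\|^2_{L^2}$ for $\|\mathbb{F}^\pm_{H_i}\|^2_{L^2}$ minus the $C_2$-integrand, followed by an expansion of a sum of squares built from the Hermitian--Einstein equations. But the proof never closes, because the one genuinely nontrivial ingredient is missing: the Weitzenb\"ock-type identity
\begin{equation*}
\langle\langle\phi_a,[\sqrt{-1}\Lambda_\pm \mathbb{F}_{H}^\pm,\phi]_a\rangle\rangle_{L^2}=\|\partial^\pm_{H}\phi_a\|^2_{L^2}\;\geq\;0,
\end{equation*}
which the paper proves using the $I_\pm$-holomorphicity of $\phi_a$ (so that $[\mathbb{F}^\pm,\phi]_a=\bar\partial_\pm\partial^\pm_H\phi_a$), the K\"ahler identities on Gauduchon manifolds, and Stokes together with $dd^c_\pm\omega_\pm^{n-1}=0$ to kill the torsion term $\int_X\bar\partial_\pm|\phi_a|^2_H\wedge\partial_\pm(\omega_\pm^{n-1})$. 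In the paper's grouping, the only indefinite contribution in the expansion is the curvature/quiver cross term $2\sum_i\langle\langle K_i,\Psi_i\rangle\rangle_{L^2}=2\sum_{a\in Q_1}\mathrm{Re}\langle\langle\phi_a,[K,\phi]_a\rangle\rangle_{L^2}$ (your notation), and this identity is precisely what shows it is non-negative. You explicitly defer this step as ``the hard part'' and ``the real obstacle''; it is not sign bookkeeping, it \emph{is} the proof, and nothing in your four steps supplies it.

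Moreover, Step 3 actively destroys the structure needed to apply that identity. The paper keeps the block $\Psi_i-\lambda(\tau_i+\gamma\sigma_i)\mathrm{Id}_{E_i}$ intact as a manifestly non-negative square $\frac{1}{\sigma_i}\|\Psi_i-\lambda(\tau_i+\gamma\sigma_i)\mathrm{Id}_{E_i}\|^2_{L^2}$; by instead substituting the Hermitian--Einstein equation to write $K_i-\tfrac{\lambda(\tau_i+\gamma\sigma_i)}{\sigma_i}\mathrm{Id}_{E_i}=-\tfrac{1}{\sigma_i}\Psi_i$, you convert the quiver contribution into the genuinely negative term $-\tfrac{1}{\sigma_i}\|\Psi_i\|^2_{L^2}$, and after this substitution there is no cross term $\sum_i\langle K_i,\Psi_i\rangle$ left in your expression --- so Step 4, which proposes to regroup exactly that cross term over arrows, is inconsistent with what Step 3 produces. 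The leftover inequality after Step 3 is, via the HE equation, just a reversible rewriting of the statement you are trying to prove (the manipulation is circular), and it cannot be closed by pointwise or Cauchy--Schwarz estimates, since the remaining positive terms do not dominate $\tfrac{1}{\sigma_i}\|\Psi_i\|^2_{L^2}$ with the right constants. A smaller but real slip: completing the square produces the mean-curvature cross term with coefficient $2$, so your Step 2 display actually computes $\sum_i\sigma_i\bigl(C_2(E_i)-2C_2'(E_i)\bigr)$, not $\sum_i\sigma_i\bigl(C_2(E_i)-C_2'(E_i)\bigr)$; the paper's own expansion likewise yields $2C_2'$ against the $C_2'$ in its statement, but as written your Step 2 equation is false, and since the sign of $C_2'$ is indeterminate neither version of the inequality implies the other.
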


\begin{proof}By assumption we have
\begin{align*}
 0=&\sum_{i\in Q_0}\frac{1}{\sigma_i}\Big|\Big|\sqrt{-1}(\alpha\sigma_i\Lambda_+ \mathbb{F}^+_{H_i}+(1-\alpha)\sigma_i\Lambda_-\mathbb{F}^-_{H_i})+\sum_{a\in h^{-1}(i)}\phi_a\circ \phi_a^{*H}-\sum_{a\in t^{-1}(i)} \phi_a^{*H}\circ\phi_a\\
 &\ \ \ \ \ \ \ \ \ \ \  -\lambda(\tau_i+\mu_{\alpha,\sigma,\tau}(\mathcal{E})\sigma_i)\mathrm{Id}_{E_i}\Big|\Big|^2_{L^2}\\
 =&\sum_{i\in Q_0}\Bigg(\alpha^2\sigma_i||\Lambda_+ \mathbb{F}^+_{H_i}||^2_{L_2}+(1-\alpha)^2\sigma_i||\Lambda_- \mathbb{F}^-_{H_i}||^2_{L_2}+2\alpha(1-\alpha)\sigma_i\langle\langle\Lambda_+ \mathbb{F}^+_{H_i},\Lambda_- \mathbb{F}^-_{H_i}\rangle\rangle_{L^2}\\
 &\ \ \ \ \ \ \ \ \ +\frac{1}{\sigma_i}\bigg|\bigg|\sum_{a\in h^{-1}(i)}\phi_a\circ \phi_a^{*H}-\sum_{a\in t^{-1}(i)} \phi_a^{*H}\circ\phi_a
   -\lambda(\tau_i+\mu_{\alpha,\sigma,\tau}(\mathcal{E})\sigma_i)\mathrm{Id}_{E_i}\bigg|\bigg|^2_{L^2}\Bigg)\\
   &-2\lambda\sum_{i\in Q_0}{(\tau_i+\mu_{(\alpha,\sigma,\tau)}(\mathcal{E})\sigma_i)}\int_X\Tr \bigg(\sqrt{-1}\Big(\alpha\Lambda_+ \mathbb{F}^+_{H_i}+(1-\alpha)\Lambda_-\mathbb{F}^-_{H_i}\Big)\bigg)d\Vol_g\\
   &+2\sum_{a\in Q_1}\mathrm{Re}\langle\langle\phi_a,[\sqrt{-1}(\alpha\Lambda_+ \mathbb{F}^+_{H}+(1-\alpha)\Lambda_-\mathbb{F}^-_{H}), \phi]_a\rangle\rangle_{L^2},
\end{align*}
where $[A,\phi]_a=A_{h(a)}\circ\phi_a-\phi_a\circ A_{t(a)}$ for $A\in \End(\mathcal{E})$.
Then we find the desired inequality by virtue of the following identities
\begin{align*}
  ||\Lambda_\pm \mathbb{F}_{H_i}^\pm||_{L^2}^2=& || \mathbb{F}_{H_i}^\pm||_{L^2}^2-\int_X\Tr(( \mathbb{F}_{H_i}^\pm)^2)\wedge \frac{\omega^{n-2}_\pm}{(n-2)!},\\
  \langle\langle\phi_a,[\sqrt{-1}\Lambda_\pm \mathbb{F}_{H}^\pm,\phi]_a\rangle\rangle_{L^2}=&\langle\langle\phi_a,2\sqrt{-1}\Lambda_\pm\bar\partial_\pm\partial^\pm_{H}\phi_a
  -(\partial^\pm_{H})^{* H,g}\partial^\pm_{H}\phi_a\rangle\rangle_{L^2}\\
  &+\langle\langle\phi_a,\frac{\sqrt{-1}}{(n-1)!}\star_g(\bar\partial_\pm(\omega_\pm^{n-1})\wedge \partial^\pm_{H}\phi_a)\rangle\rangle_{L^2}\\
  =&||\partial^\pm_{H}\phi_a||^2_{L^2}-\frac{\sqrt{-1}}{(n-1)!}\int_X\bar\partial_\pm|\phi_a|^2_H\wedge \partial_\pm(\omega_\pm^{n-1})\\
   =&||\partial^\pm_{H}\phi_a||^2_{L^2},
\end{align*}
where $\star_g$ denotes the Hodge star with respect to $g$,  the connections acting  on $\phi_a$ are the induced connections on $E_{h(a)}\otimes (E_{t(a)})^*$, and the $I_\pm$-holomorphicity of $\phi_a$'s plays a crucial roal in the second identity.
\end{proof}

We end this section with some examples.
\begin{example}\begin{enumerate}
\item We first consider $X=\mathbb{P}^1$ with the standard  K\"{a}hler structure $(I,\omega)$, it can be retreated as a generalized  K\"{a}hler manifold by taking  $I=I_+=I_-, \omega=\omega_+=\omega_-$.
                     Let $Q=(Q_0,Q_1,h,t)$ be a quiver with $Q_0=\{i,j\}$, $Q_1=\{a\}$ and $t(a)=i,h(a)=j$, then we consider the  $I_{\pm}$-holomorphic $Q$-bundle $\mathcal{E}=(E,\phi)$ over $\mathbb{P}^1$  given by $E_i=\mathcal{O}(m_i), E_j=\mathcal{O}(m_j)$ for $m_j\geq m_i$, and $0\neq\phi_a\in H^0(\mathbb{P}^1, \mathcal{O}(m_j-m_i))$. Obviously, $\deg_{\pm}\mathcal{O}(m)=m$, hence for  the  stability parameters $\alpha_i, \alpha_j$; $\sigma_i,\sigma_j$ and $\tau_i,\tau_j$, $\mathcal{E}$ is $(\alpha,\sigma,\tau)$-stable if and only if the following inequality holds
                     $$\sigma_i\sigma_j(m_j-m_i)< \sigma_i\tau_j-\sigma_j\tau_i.$$
In particular, the parameters $\sigma,\tau$ are subject to the condition
$$\sigma_i\tau_j-\sigma_j\tau_i>0,$$
which gives the constraints on these parameters as follows:
\begin{itemize}
\item if $\tau_i=0$, then $\tau_j>0$;
\item if $\tau_j=0$, then $\tau_i<0$;
\item if $\tau_i, \tau_j\neq0$, then $\frac{\sigma_i}{\sigma_j}>\frac{\tau_i}{\tau_j}$.
\end{itemize}

\item Now we consider the example of Hopf surfaces, which can be found in \cite{G2} (Example 1.21) and \cite{Hu} (Section 4 for details). Let $X$ be a standard Hopf surface, namely $X= \mathbb{C}^2\backslash\{(0,0)\}/(2(z_1,z_2)\sim (z_1,z_2))$, then $X$ is diffeomorphic to $S^3\times S^1$. Denote by $I_+$ the induced  complex structure from $\mathbb{C}^2$, the Hermitian  metric is given by 
 $$g=\frac{1}{4\pi |z|^2}(dz_1d\bar z_1+dz_2d\bar z_2),$$ 
for $|z|^2=z_1\bar z_1+z_2\bar z_2$, and the associated 2-form $\omega_+=gI_+$ is 
$$\omega_+=\frac{\sqrt{-1}}{4\pi |z|^2}(dz_1\wedge d\bar z_1+dz_2\wedge d\bar z_2).$$ 
One can specify another complex structure $I_-$ by providing a generator $$\Omega=\frac{1}{|z|^4}(\bar z_1 dz_1+z_2d\bar z_2)\wedge (\bar z_1dz_2-z_2d\bar z_1)$$ for $\Omega^{2,0}((X,I_+))$. It is easy to check that  $(g, I_-)$ is also Hermitian, and the associated 2-form is given by
$$\omega_-=\frac{\sqrt{-1}}{4\pi |z|^4}((\bar z_1dz_1+z_2d\bar z_2)\wedge(z_1d\bar z_1+\bar z_2dz_2)+(\bar z_1dz_2-z_2d\bar z_1)\wedge(z_1d\bar z_2-\bar z_2dz_1)).$$ Then $(I_+,I_-, g, \mathbb{H})$ defines a twisted generalized K\"{a}hler structure on $X$, where $\mathbb{H}=d^c_+\omega_+=-d^c_-\omega_-$ \cite{G2,Hu}. Actually, the torsion of twisted generalized K\"{a}hler structures on $X$ cannot be exact \cite{G2}. There is a natural projection $\mathrm{pr}:X\rightarrow \mathbb{P}^1$ onto $\mathbb{P}^1$ via $(z_1,z_2)\mapsto [z_1:z_2]$, and this projection is holomorphic with respect to $I_+$. We set $\mathcal{O}_+(m):=\mathrm{pr}^*\mathcal{O}_{\mathbb{P}^1}(m)$ for all $m\in\mathbb{Z}$, where $\mathcal{O}_{\mathbb{P}^1}(m)$ denotes the holomorphic line bundle on $\mathbb{P}^1$ of degree $m$. Consider the inverse map $\varrho:X\rightarrow X$, $(z_1,z_2)\mapsto (z_1,z_2)^{-1}:=\frac{1}{|z|^2}(\bar{z}_1,-z_2)$, which is a biholomorphic map from $(X,I_-)$ to $(X,I_+)$, and we introduce $\mathcal{O}_-(m):=\varrho^*\mathcal{O}_+(m)$ for all $m\in\mathbb{Z}$. For simplicity, we denote $\mathcal{O}_\pm(0)$ by $\mathcal{O}_\pm$. By Proposition 4.5 of \cite{Hu}, $\mathcal{O}_{+}(m)$  can be made into  an $I_{\pm}$-holomorphic line bundle $L_+(m):=(\mathbb{O}, \bar\partial_{m,+}, \bar\partial_{m,-})$ on $(X, g,I_+,I_-,\mathbb{H})$ such that $(\mathbb{O},\bar\partial_{m,+})\simeq \mathcal{O}_{+}(m)$ and $(\mathbb{O},\bar\partial_{m,-})\simeq \mathcal{O}_{-}(-m)$, where $\mathbb{O}$ denotes the topologically trivial line bundle $X\times\mathbb{C}$ on $X$. Similarly,  the  $I_{\pm}$-holomorphic line bundle associated to $\mathcal{O}_{-}(m)$ is denoted by $L_-(m):=(\mathbb{O},\bar\partial^\prime_{m,+}, \bar\partial^\prime_{m,-})$ with isomorphisms $(\mathbb{O},\bar\partial^\prime_{m,+})\simeq \mathcal{O}_+(-m)$ and $(\mathbb{O},\bar\partial^\prime_{m,-})\simeq \mathcal{O}_-(m)$. Moreover, one can show that   \cite{Hu}
                     \begin{align*}
                       \deg_{+}L_+(m)&=m,\qquad \deg_{-}L_+(m)=-m,\\
                       \deg_{+}L_-(m)&=-m,\quad\, \deg_{-}L_-(m)=m.
                     \end{align*}

Next we take the quiver $Q$ be the same as in (1), and an  $I_{\pm}$-holomorphic $Q$-bundle $\mathcal{E}=(E,\phi)$ which is given by $E_i=L_+(m_i), E_j=L_+(m_j)$ and $\phi_a$, where $\phi_a$ must vanish if $m_i\neq m_j$. Assume $m_i=m_j=m$ and $\phi_a$ is non-zero, then $\mathcal{E}$ is $(\alpha,\sigma,\tau)$-stable if and only if
 $$2m\sigma_i\sigma_j(\alpha_j-\alpha_i)<\sigma_i\tau_j-\sigma_j\tau_i.$$
                     
Finally, as the Example 4.11 in \cite{Hu}, let $V$ be a fixed smooth complex vector bundle of rank 2, we choose  $I_{\pm}$-holomorphic structures $\bar{\partial}_\pm^V$ on $V$ as follows:
\begin{itemize}
\item $\bar \partial_+^V$ is $I_+$-holomorphic structure such that $V_+:=(V,\bar \partial_+^V)$ is not isomorphic to a sum of two line bundles and is given by the non-trivial extension
$$0\rightarrow \mathcal{O}_+ \xrightarrow{\chi_+}V_+\rightarrow \mathcal{O}_+(-m_+)\rightarrow 0$$
for $m_+\in\mathbb{Z}^{>0}$,
\item $\bar \partial_-^V$ is $I_-$-holomorphic structure such that $V_-:=(V,\bar \partial_-^V)$ is given by the non-trivial extension
$$0\rightarrow \mathcal{O}_-\xrightarrow {\chi_-}V_-\rightarrow \mathcal{O}_-(m_-)\rightarrow 0$$
for $m_-\in\mathbb{Z}^{\geq2}$.
\end{itemize} 
We assume the images of $\mathcal{O}_{\pm}$ in $V_{\pm}$ coincide as smooth line subbundles of $V$. Then $\widetilde{L}:=(\mathbb{O},\bar\partial_{0,+},\bar\partial^\prime_{0,-})$ is the only $I_{\pm}$-holomorphic line subbundle of $(V, \bar \partial^V_+, \bar \partial^V_-)$ \cite{Hu}.  The $I_\pm$-holomorphic $Q$-bundle $\mathcal{E^\prime}=(E^\prime,\phi^\prime)$ is given by $E^\prime_i=\widetilde{L},E^\prime_j=(V, \bar \partial^V_+, \bar \partial^V_-)$ and $\phi^\prime_a$ is determined by the inclusions $\chi_\pm$. To find the constraints on stability parameters, note that $\mathcal{E}'$ has 3 proper $Q$-subbundles:
\begin{itemize}
\item[(i)] $\mathcal{F}=(F,\phi)$, where $F_i=\tilde{L}$, $F_j=\tilde{L}$ and $\phi_a$ is induced by $\phi_a'$, which is identity;
\item[(ii)] $\mathcal{F}=(F,\phi)$, where $F_i=0$, $F_j=\tilde{L}$ and $\phi_a=0$;
\item[(iii)] $\mathcal{F}=(F,\phi)$, where $F_i=0$, $F_j=(V, \bar \partial^V_+, \bar \partial^V_-)$ and $\phi_a=0$,
\end{itemize} 
therefore, $\mathcal{E^\prime}$ is $(\alpha,\sigma,\tau)$-stable if and only if the following inequalities are satisfied
      \begin{align*}
        (\sigma_i+\sigma_j)\sigma_j(\alpha_j m_+-(1-\alpha_j)m_-)&<\sigma_j\tau_i-\sigma_i\tau_j,  \\
                         \sigma_j^2(\alpha_j m_+-(1-\alpha_j)m_-)&<-\sigma_j\tau_i+\sigma_i\tau_j,  \\
                          \sigma_i\sigma_j(\alpha_j m_+-(1-\alpha_j)m_-)&>2(\sigma_j\tau_i-\sigma_i\tau_j).
      \end{align*}
               \end{enumerate}
\end{example}

\section{The Hitchin--Kobayashi Correspondence}

\begin{lemma}\label{lem3.1}
If there exists an  $(\alpha,\sigma,\tau)$-Hermitian--Einstein metric on an $I_\pm$-holomorphic $Q$-bundle $\mathcal{E}=(E,\phi)$  over an  $n$-dimensional generalized K\"{a}hler manifold $(X, I_+,I_-,g, b)$, then  $\mathcal{E}$ is $(\alpha,\sigma,\tau)$-polystable.
\end{lemma}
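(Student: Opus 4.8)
The plan is to prove this easy direction of the correspondence by a Chern--Weil argument carried out simultaneously for the two Gauduchon structures $(X,I_+)$ and $(X,I_-)$, in the spirit of Kobayashi and of \cite{Pr1} but bookkeeping the extra data $I_\pm$. I would first reduce polystability to two claims: that $\mathcal{E}$ is $(\alpha,\sigma,\tau)$-semistable, and that any proper coherent $Q$-subsheaf $\mathcal{F}$ attaining $\mu_{\alpha,\sigma,\tau}(\mathcal{F})=\mu_{\alpha,\sigma,\tau}(\mathcal{E})=:\gamma$ splits off $\mathcal{E}$ orthogonally as an $I_\pm$-holomorphic $Q$-subbundle. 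Given $\mathcal{F}=(\mathcal{F}_+,\mathcal{F}_-,\mathbb{S}_+,\mathbb{S}_-)$, I use Proposition \ref{prop2.8} and the hypothesis $\mathrm{codim}\,S_i\ge 2$ to work over $X\setminus S_i$, where $F_{+i}=F_{-i}=F_i$ is a genuine smooth subbundle; let $\pi_i\in S(E_i,H_i)$ be the $H_i$-orthogonal projection onto $F_i$. Then $\pi_i$ is a weakly holomorphic projection for both $\bar\partial_{+i}$ and $\bar\partial_{-i}$, and the $Q$-subsheaf condition $\phi_a(F_{t(a)})\subset F_{h(a)}$ is equivalent to $(\mathrm{Id}-\pi_{h(a)})\circ\phi_a\circ\pi_{t(a)}=0$.

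The heart of the argument is the subsheaf Chern--Weil formula applied to each complex structure: on the Gauduchon manifold $(X,I_\pm)$,
\[
\deg_\pm(F_{\pm i})=\frac{\sqrt{-1}}{2\pi}\int_X\Tr\!\big(\pi_i\,\mathbb{F}^\pm_{H_i}\big)\wedge\omega_\pm^{n-1}-\frac{(n-1)!}{2\pi}\,\big\|\bar\partial_\pm\pi_i\big\|_{L^2}^2,
\]
the integrals being finite because $S_i$ has codimension at least two and $\bar\partial_\pm\pi_i\in L^2$. Forming $\deg_{\alpha,\sigma,\tau}(\mathcal{F})$, I substitute the $(\alpha,\sigma,\tau)$-Hermitian--Einstein equation of Definition \ref{def2.10} to replace $\sqrt{-1}(\alpha_i\sigma_i\Lambda_+\mathbb{F}^+_{H_i}+(1-\alpha_i)\sigma_i\Lambda_-\mathbb{F}^-_{H_i})$ by $\lambda(\tau_i+\gamma\sigma_i)\mathrm{Id}_{E_i}-(\sum_{a\in h^{-1}(i)}\phi_a\circ\phi_a^{*H}-\sum_{a\in t^{-1}(i)}\phi_a^{*H}\circ\phi_a)$, and use $\int_X\Tr(\pi_i)\,\lambda\,d\Vol_g=\tfrac{2\pi}{(n-1)!}\rk(F_i)$. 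Reindexing the resulting curvature-free terms by arrows and invoking cyclicity of the trace together with $\pi_i^2=\pi_i=\pi_i^{*H}$ collapses them into $\sum_{a\in Q_1}(|\pi_{h(a)}\phi_a|^2-|\phi_a\pi_{t(a)}|^2)$, and the subsheaf condition with the Pythagorean identity $|\pi_{h(a)}\phi_a|^2=|\phi_a\pi_{t(a)}|^2+|\pi_{h(a)}\phi_a(\mathrm{Id}-\pi_{t(a)})|^2$ rewrites this as a sum of squares.

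Collecting terms gives
\[
\mu_{\alpha,\sigma,\tau}(\mathcal{F})=\gamma-\frac{(n-1)!}{2\pi\sum_i\sigma_i\rk(F_i)}\Bigg(\sum_{i\in Q_0}\sigma_i\big(\alpha_i\|\bar\partial_+\pi_i\|^2_{L^2}+(1-\alpha_i)\|\bar\partial_-\pi_i\|^2_{L^2}\big)+\sum_{a\in Q_1}\|\pi_{h(a)}\phi_a(\mathrm{Id}-\pi_{t(a)})\|^2_{L^2}\Bigg).
\]
Since $\sigma_i>0$ and $\alpha_i\in(0,1)$, the bracketed quantity is nonnegative, whence $\mu_{\alpha,\sigma,\tau}(\mathcal{F})\le\gamma$ and $\mathcal{E}$ is $(\alpha,\sigma,\tau)$-semistable. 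Equality forces $\bar\partial_\pm\pi_i=0$ for every $i$ and $\pi_{h(a)}\phi_a(\mathrm{Id}-\pi_{t(a)})=0$ for every $a$; the former makes each $\pi_i$ an $I_\pm$-holomorphic projection, and together with the subsheaf condition the latter shows $\phi_a$ preserves both $F$ and $F^\perp$, so $\mathcal{E}=\mathcal{F}\oplus\mathcal{F}^\perp$ as $I_\pm$-holomorphic $Q$-bundles with $H$ restricting to an $(\alpha,\sigma,\tau)$-Hermitian--Einstein metric on each summand (same constant $\gamma$). Descending induction on $\sum_i\rk(E_i)$ then yields polystability. I expect the only genuine obstacle to be the analytic justification of the Chern--Weil formula across the singular sets for the two Gauduchon structures at once --- the $L^2$-integrability of $\bar\partial_\pm\pi_i$ and the vanishing of boundary contributions near $S_i$, which rest on $\mathrm{codim}\,S_i\ge 2$ and $dd^c_\pm\omega_\pm^{n-1}=0$; the algebraic handling of the $\phi$-terms and the induction are routine.
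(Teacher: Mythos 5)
Your proposal is correct and follows essentially the same route as the paper's proof: the Chern--Weil degree formula for the orthogonal projections $\pi_i$ with respect to both $\bar\partial_{+i}$ and $\bar\partial_{-i}$, substitution of the $(\alpha,\sigma,\tau)$-Hermitian--Einstein equation, and identification of the arrow terms with $-\sum_{a\in Q_1}\|\pi_{h(a)}\phi_a(\mathrm{Id}-\pi_{t(a)})\|^2_{L^2}$ (the paper's $|\phi_a^\bot|^2_H$), yielding semistability and the equality-case splitting. Your handling of the equality case via orthogonal splitting and induction is the same mechanism the paper invokes, merely phrased there in the contrapositive form that an indecomposable $\mathcal{E}$ must be stable.
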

\begin{proof} Let $\mathcal{E}^\prime$ be a proper coherent $Q$-subsheaf of $\mathcal{E}$.  At each vertex $i\in Q_0$, one defines the   orthogonal projections $p_{\pm i}: E_{\pm i}\rightarrow E^\prime_{\pm i}$, which are defined outside $S_{\pm i}$, respectively, via the metric  $H_i$, then we have
\begin{align*}
   \deg_\pm(E_{\pm i}^\prime)=\frac{(n-1)!}{2\pi}\int_{X\backslash S_{\pm i}}[\Tr(\sqrt{-1}p_{\pm i}\circ \Lambda_{\pm}\mathbb{F}^{\pm}_{H_i})-|\xi_i^{\pm}|^2_{H_i}]d\Vol_g,
\end{align*}
where $\xi_i^\pm=\bar \partial_{\pm i}p_{\pm i}$ denote the second fundamental forms which are of class  $L^2$. Hence, by assumption that $H$ is a Hermitian--Einstein metric on $\mathcal{E}$, the degree is calculated as
\begin{align*}
  \deg_{\alpha,\sigma,\tau}(\mathcal{E}^\prime)=&\mu_{\alpha,\sigma,\tau}(\mathcal{E})\sum_{i\in Q_0}\sigma_i\rk(E_i^\prime)\\
  &-\frac{(n-1)!}{2\pi}\sum_{i\in Q_0}\int_{X\backslash S}\Big(\alpha_i\sigma_i|\xi_i^{+}|^2_{H_i,g}+(1-\alpha_i)\sigma_i|\xi_i^{-}|^2_{H_i,g}\Big)d\Vol_g\\
  &-\frac{(n-1)!}{2\pi}\sum_{a\in Q_1}\int_{X\backslash S}|\phi_a^\bot|^2_{H},
\end{align*}
where $S=\bigcup\limits_{i\in Q_0}S_i$, $\phi_a^\bot$ is the composition $(E_{t(a)}^\prime)^\bot\xrightarrow{\phi_a} E_{h(a)}\xrightarrow{p_{h(a)}} E^\prime_{h(a)}$ for the orthogonal complement  $(E_{t(a)}^\prime)^\bot$ of $E_{t(a)}^\prime$ in $E_{t(a)}$ defined outside $S_{h(a)}\bigcup S_{t(a)}$, and $|\phi_a^\bot|^2_{ H}$ is defined via the induced metric $H$ on  $E_{h(a)}\otimes (E_{t(a)})^*$. It follows that $\mathcal{E}$ is semistable. Assume $\mathcal{E}=(E,\phi)$ is indecomposable, i.e. $\mathcal{E}$ cannot be written as a direct sum of two $Q$-bundles, then either $\xi_i\neq0$ for some $i\in Q_0$ or $\phi_a^\bot\neq0$ for some $a\in Q_1$, therefore $\mu_{\alpha,\sigma,\tau}(\mathcal{E}^\prime)<\mu_{\alpha,\sigma,\tau}(\mathcal{E})$, thus $\mathcal{E}$ is stable. Finally, we find that $\mathcal{E}$ is polystable.
\end{proof}

Next we will use the continuity method to show the converse direction, thus to show that if an $I_\pm$-holomorphic $Q$-bundle $\mathcal{E}=(E,\phi)$ is $(\alpha,\sigma,\tau)$-stable, then there exist an $(\alpha,\sigma,\tau)$-Hermitian--Einstein metric $H$ on it. The approach of proof we employed here mainly follows from \cite{Hu,LT}.

We fix a Hermitian metric $H$ on an $I_\pm$-holomorphic $Q$-bundle $\mathcal{E}=(E,\phi)$. If $\tilde H=Hf=\{H_i f_i\}_{i\in Q_0}$ is an
$(\alpha,\sigma,\tau)$-Hermitian--Einstein metric for $f\in S^+(H,\mathcal{E})$, then at each vertex $i$ we have
\begin{equation*}\label{2.1}
  \begin{split}
    L_{(\alpha,\sigma,\tau)i}(f):=&\,K_{(\alpha,\sigma,\tau)}(H_i)+\sqrt{-1}\Big(\alpha_i\sigma_i\Lambda_+\bar\partial_{+}
    (f_i^{-1}\partial^+_{H_i}f_i)+(1-\alpha_i)\sigma_i\Lambda_-\bar\partial_{-}(f_i^{-1}\partial^-_{H_i}f_i)\Big)\\
 &+\sum_{a\in h^{-1}(i)} \phi_a\circ (\phi_a)^{*\tilde H}-\sum_{a\in t^{-1}(i)} (\phi_a)^{*\tilde H}\circ\phi_a\\
 =&K_{(\alpha,\sigma,\tau)}(\tilde H_i)+\sum_{a\in h^{-1}(i)} \phi_a\circ (\phi_a)^{*\tilde H}-\sum_{a\in t^{-1}(i)} (\phi_a)^{*\tilde H}\circ\phi_a\\
=&\,0,
  \end{split}
\end{equation*}
where \begin{align*}
        K_{(\alpha,\sigma,\tau)}(H_i)=&\sqrt{-1}(\alpha_i\sigma_i\Lambda_+ \mathbb{F}^+_{H_i}+(1-\alpha_i)\sigma_i\Lambda_-\mathbb{F}^-_{H_i})-\lambda(\tau_i+\mu_{\alpha,\sigma,\tau}(\mathcal{E})\sigma_i)\mathrm{Id}_{E_i} \\
  (\phi_a)^{*\tilde H}=&f^{-1}_{t(a)}\circ (\phi_a)^{*H}\circ f_{h(a)}.
      \end{align*}

The perturbed equation is given by
\begin{equation*}\label{2.3}
  \begin{split}
    L_{(\alpha,\sigma,\tau)i}^\varepsilon(f):=&\,\,L_{(\alpha,\sigma,\tau)i}(f)+\varepsilon\log f_i=0
  \end{split}
\end{equation*}
for $\varepsilon\in[0,1]$.
 Consider the set
\begin{equation*}\label{2.6}
  J=\left\{
\varepsilon\in[0,1]:\text{there exists }\,f^{(\varepsilon)}\in S^+(\mathcal{E},H)\text{\ such that }  L^\varepsilon_{(\alpha,\sigma,\tau)i}(f{(\varepsilon)})=0 \text{\ holds  for each vertex\ } i\in Q_0.
\right\}.
\end{equation*}

\begin{proposition}\label{prop3.2}
\begin{enumerate}
\item[(1)] There exists a Hermitian metric $H=\{H_i\}_{i\in Q_0}$ on an $I_\pm$-holomorphic $Q$-bundle $\mathcal{E}=(E,\phi)$, such that the simultaneous equations $\{L^1_{{(\alpha,\sigma,\tau)i}}(f)=0\}_{i\in Q_0}$ has a solution $f^{(1)}\in S^+(H,\mathcal{E})$ with
                         $$\prod_{i\in Q_0}\det f_i^{(1)}=1.$$
\item[(2)] If $ \sigma_i=\sigma_j$ for all $i,j\in Q_0$, then there exists a Hermitian metric $H=\{H_i\}_{i\in Q_0}$ on an $I_\pm$-holomorphic $Q$-bundle $\mathcal{E}=(E,\phi)$, such that
\begin{align*}
                                                                                                                 \sum_{i\in Q_0} \Tr(K_{(\alpha,\sigma,\tau)}( H_i))=&0,
                                                                                                               \end{align*} and $$\prod_{i\in Q_0}\det f_i^{(\varepsilon)}=1$$ for any solution $f^{(\varepsilon)}$ of $\{L^\varepsilon_{{(\alpha,\sigma,\tau)i}}(f)=0\}_{i\in Q_0}$.
                   \end{enumerate}
 \end{proposition}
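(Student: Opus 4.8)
The plan is to read both statements as the initial-value and normalization steps of the continuity method: part (1) supplies a background metric for which the most regularized equation ($\varepsilon=1$) is solvable, while part (2) pins down the determinant of \emph{every} solution. In both cases the only genuine analytic input is the solvability of a single linear elliptic equation on the compact Gauduchon manifold $(X,g)$, together with the observation that the relevant integral obstruction vanishes identically because of the way the slope $\mu_{(\alpha,\sigma,\tau)}(\mathcal E)$ enters $K_{(\alpha,\sigma,\tau)}$.

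For part (1) I would argue constructively. Fix any smooth Hermitian metric $\tilde H=\{\tilde H_i\}_{i\in Q_0}$ on $\mathcal E$ and set
\[
A_i:=K_{(\alpha,\sigma,\tau)}(\tilde H_i)+\sum_{a\in h^{-1}(i)}\phi_a\circ\phi_a^{*\tilde H}-\sum_{a\in t^{-1}(i)}\phi_a^{*\tilde H}\circ\phi_a,
\]
which is an $\tilde H_i$-Hermitian endomorphism of $E_i$. Defining $H_i:=\tilde H_i\exp(A_i)$ and $f_i^{(1)}:=\exp(-A_i)$, one has $\tilde H_i=H_if_i^{(1)}$ and $\log f_i^{(1)}=-A_i$, so that $L^1_{(\alpha,\sigma,\tau)i}(f^{(1)})=A_i+\log f_i^{(1)}=0$ at each vertex. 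The determinant is then $\prod_i\det f_i^{(1)}=\exp(-\sum_i\Tr A_i)$; since $\Tr(\phi_a\circ\phi_a^{*\tilde H})=\Tr(\phi_a^{*\tilde H}\circ\phi_a)$, the arrow contributions cancel after summing over $i$, leaving $\prod_i\det f_i^{(1)}=\exp(-\sum_i\Tr K_{(\alpha,\sigma,\tau)}(\tilde H_i))$. To force this to equal $1$ I would replace $\tilde H_i$ by $e^{u}\tilde H_i$ and solve the scalar equation $\sum_i\Tr K_{(\alpha,\sigma,\tau)}(e^u\tilde H_i)=0$; this is a linear elliptic equation whose solvability obstruction is $\int_X\sum_i\Tr K_{(\alpha,\sigma,\tau)}(\tilde H_i)\,d\Vol_g$, and a direct computation using $\deg_\pm$ and the definition of $\mu_{(\alpha,\sigma,\tau)}(\mathcal E)$ shows this integral vanishes. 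The Gauduchon hypothesis for both $I_\pm$ is exactly what guarantees solvability once the obstruction vanishes, and this proves (1) (note that this half needs no hypothesis on the $\sigma_i$).

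For part (2) the same scalar-equation technique, now applied to the background metric itself, produces $H$ with $\sum_i\Tr K_{(\alpha,\sigma,\tau)}(H_i)=0$. To control the determinant of an arbitrary solution $f^{(\varepsilon)}$, I would set $\psi_i=\log\det f_i^{(\varepsilon)}$ and $w=\sum_i\psi_i$, take the pointwise trace of $L^\varepsilon_{(\alpha,\sigma,\tau)i}(f^{(\varepsilon)})=0$, and sum over $i$. The arrow terms cancel as above, the zeroth-order metric term disappears because $\sum_i\Tr K_{(\alpha,\sigma,\tau)}(H_i)=0$, and the variation of $\Tr\mathbb F^\pm$ under $\tilde H=Hf^{(\varepsilon)}$ contributes $\sqrt{-1}\Lambda_\pm\partial_\pm\bar\partial_\pm\psi_i$; one is then left with
\[
\varepsilon\,w+\sigma\sum_{i\in Q_0}\Big(\alpha_i\sqrt{-1}\Lambda_+\partial_+\bar\partial_+\psi_i+(1-\alpha_i)\sqrt{-1}\Lambda_-\partial_-\bar\partial_-\psi_i\Big)=0 .
\]
The normalization $\sigma_i=\sigma$ brings the second-order part into a form to which a maximum-principle argument applies, and since each $\sqrt{-1}\Lambda_\pm\partial_\pm\bar\partial_\pm$ annihilates constants and obeys the maximum principle on $(X,g)$, the equation above with $\varepsilon>0$ forces $w\equiv 0$, i.e.\ $\prod_i\det f_i^{(\varepsilon)}=1$.

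The hard part will be this last step of part (2): making the maximum-principle argument for the determinant rigorous. The operator $\sqrt{-1}\Lambda_\pm\partial_\pm\bar\partial_\pm$ is not self-adjoint on a non-K\"ahler Gauduchon manifold, so one must rely on the Gauduchon identity $\int_X\sqrt{-1}\Lambda_\pm\partial_\pm\bar\partial_\pm u\,d\Vol_g=0$ (valid for both $I_\pm$ by hypothesis) rather than on naive integration by parts, and the per-vertex weights $\alpha_i$ mean that reducing the summed trace equation to a genuine scalar statement about $w$ must be handled carefully; this is precisely where the hypothesis $\sigma_i=\sigma$ is indispensable. By contrast, the solvability of the auxiliary scalar equations and the cancellation of the arrow terms are routine once the slope identity $\int_X\sum_i\Tr K_{(\alpha,\sigma,\tau)}(\cdot)\,d\Vol_g=0$ has been observed.
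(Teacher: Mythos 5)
Your part (1) is essentially the paper's own proof and is fine: you conformally normalize a reference metric so that $\sum_{i\in Q_0}\Tr (K_{(\alpha,\sigma,\tau)}(\tilde H_i))=0$ \emph{pointwise} (solvable because the Gauduchon condition forces the scalar Chern-type operator to have image equal to the zero-mean functions, and the slope normalization built into $K_{(\alpha,\sigma,\tau)}$ kills the integral obstruction), and then set $f_i^{(1)}=\exp(-A_i)$, $H_i=\tilde H_i\exp(A_i)$; the only cosmetic difference from the paper is that you use one conformal factor $u$ where the paper uses one factor $\chi_i$ per vertex.

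Part (2), however, has two genuine gaps. First, the choice of metric. The paper's part (2) is \emph{not} about producing some metric with $\sum_i\Tr(K_{(\alpha,\sigma,\tau)}(H_i))=0$ --- a conformal rescaling does that for arbitrary $\sigma_i$, exactly as in part (1). It is the verification that the \emph{particular} metric $H_i=\tilde H_i(f_i^{(1)})^{-1}$ produced in part (1) still satisfies this identity, even though the correction by $f^{(1)}$ is not conformal and a priori destroys the pointwise normalization of $\tilde H$. The paper computes
\begin{equation*}
\sum_{i\in Q_0}\Tr\big(K_{(\alpha,\sigma,\tau)}(H_i)\big)=\sum_{i\in Q_0}P_i\Big(\Tr\Big(\sum_{a\in h^{-1}(i)}\phi_a\circ(\phi_a)^{*\tilde H}-\sum_{a\in t^{-1}(i)}(\phi_a)^{*\tilde H}\circ\phi_a\Big)\Big)
=\frac{1}{2}\sum_{a\in Q_1}(\sigma_{h(a)}-\sigma_{t(a)})\Delta_g\big(|\phi_a|^2_{\tilde H}\big),
\end{equation*}
which vanishes precisely when the $\sigma_i$ are equal. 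The fact that your construction never uses $\sigma_i=\sigma_j$ at this stage is the symptom: you have sidestepped, not proved, the substantive claim, and for \emph{your} metric it is unknown whether the $\varepsilon=1$ equation of part (1) admits any solution. Since the continuity method that follows requires a single background metric enjoying both properties simultaneously, this mismatch is not harmless.

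Second, the determinant step, which you correctly flag as the hard part, is not closed by the tools you propose. From the traced, summed equation $\varepsilon w+\sigma\sum_i\big(\alpha_iP_++(1-\alpha_i)P_-\big)(\psi_i)=0$, $w=\sum_i\psi_i$, a maximum principle cannot be invoked: it applies to a single elliptic operator acting on a single function, whereas here different operators (through the $\alpha_i$) act on the different functions $\psi_i$, and at a maximum of $w$ the individual $d\psi_i$ need not vanish, so the first-order torsion parts of $P_\pm$ cannot be discarded. Likewise the Gauduchon identity $\int_XP_\pm u\,d\Vol_g=0$ only yields $\int_Xw\,d\Vol_g=0$, not $w\equiv0$. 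The tool the paper actually uses is the Gauduchon--K\"ahler identity $P_i+P_i^{*H_i}=\sigma_i\Delta_g$ on functions: it lets one replace the non-self-adjoint operators by $\tfrac{\sigma}{2}\Delta_g$ after pairing against $w$, turning the equation into $(\sigma\Delta_g+2\varepsilon)w=0$ (this is the paper's display), whence $\varepsilon\|w\|^2_{L^2}\le 0$ and $w\equiv0$. This identity, and the equality of the $\sigma_i$ that makes its right-hand side vertex-independent, is the missing ingredient in your sketch; without it the step remains open. (A caveat that applies to the paper as much as to you: combining $\sum_iP_i(\psi_i)$ into one operator applied to $w$ also implicitly uses equal $\alpha_i$ to kill the cross terms.)
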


\begin{proof}(1) For any Hermitian metric $G=\{G_i\}$ on $\mathcal{E}$
 one defines the operator
$$P^{G_i}:=\alpha_i\sigma_iP_+^{G_i}+(1-\alpha_i)\sigma_iP_-^{G_i},$$
where $P_{\pm}^{G_i}:=\sqrt{-1}\Lambda_\pm\bar\partial_{\pm }\partial^\pm_{G_i}$ for each vertex $i$, and in particular, it is denoted by $P_{ i}$ when acting on functions.
Since $$\int_X\sum_{i\in Q_0}\Tr(K_{(\alpha,\sigma,\tau)}(G_{i}))d\Vol_g=0,$$ there exist a function $\chi_i$ such that
 $$P_i(\chi_i)=-\frac{1}{|Q_0|\rk(E_i)}\sum_{i\in Q_0}\Tr(K_{(\alpha,\sigma,\tau)}(G_{i})).$$
Hence by taking $\tilde H_i=e^{\chi_i}G_i$, we obtain
$$\sum_{i\in Q_0}\Tr (K_{(\alpha,\sigma,\tau)}(\tilde H_i))=0.$$
Let us define
 \begin{align*}
   f_i^{(1)}=&\exp\left({-K_{(\alpha,\sigma,\tau)}(\tilde H_i)-\sum_{a\in h^{-1}(i)} \phi_a\circ (\phi_a)^{* \tilde H}+\sum_{a\in t^{-1}(i)} (\phi_a)^{*\tilde H}\circ\phi_a}\right),\\
   H_i=&\tilde H_i(f_i^{(1)})^{-1}.
 \end{align*}
 Since  $K_{(\alpha,\sigma,\tau)}(\tilde H_i)$ is $\tilde H_i$-Hermitian for each $i\in Q_0$, and $\phi_a\circ (\phi_a)^{*\tilde H }$ is also $\tilde H_i$-Hermitian for any $a\in h^{-1}(i)$ or $a\in t^{-1}(a)$ according to the definition of $\phi_a^{*H}$, $f_i^{(1)}$ is an $H_i$-Hermitian positive-definite endomorphism. Obviously, it follows from the traceless-ness of $\sum_{i\in Q_0}K_{(\alpha,\sigma,\tau)}(\tilde H_i)$ and $\sum_{i\in Q_0}(\sum_{a\in h^{-1}(i)} \phi_a\circ (\phi_a)^{* \tilde H}-\sum_{a\in t^{-1}(i)} (\phi_a)^{*\tilde H}\circ\phi_a)$ that
 $$\prod_{i\in Q_0}\det f_i^{(1)}=1.$$
 For $H$ and $f^{(1)}$,
we have
$$L^1_{(\alpha,\sigma,\tau)i}(f^{(1)})=K_{(\alpha,\sigma,\tau)}(\tilde H_i)+\sum_{a\in h^{-1}(i)} \phi_a\circ (\phi_a)^{* \tilde H}-\sum_{a\in t^{-1}(i)}(\phi_a)^{*\tilde H}\circ\phi_a+\log f_i^{(1)}=0.$$

(2)   The metric $H$ is chosen as above, then  the following identities hold
 \begin{align*}
\sum_{i\in Q_0} \Tr(K_{(\alpha,\sigma,\tau)}( H_i))=&\sum_{i\in Q_0} \Tr\left(K_{(\alpha,\sigma,\tau)}( \tilde H_i)-P^{H_i}\Big(\log f^{(1)}_i\Big)\right)\\
=&\sum_{i\in Q_0}P_i\left(\Tr\left(\sum_{a\in h^{-1}(i)} \phi_a\circ (\phi_a)^{* \tilde H}-\sum_{a\in t^{-1}(i)} (\phi_a)^{*\tilde H}\circ\phi_a\right)\right)\\=&\frac{1}{2}\sum_{a\in Q_1}(\sigma_{h(a)}-\sigma_{t(a)})\Delta_g\Big(|\phi_a|^2_{\tilde H}\Big),
                                                                                                                \end{align*}
where we note that $P_i+P_i^{*H_i}=\sigma_i\Delta_g$ when acting on functions, here  $\Delta_g$ is the Laplacian associated to the metric $g$. As a consequence, if $\sigma_i=\sigma_j=\sigma$ for all $i,j\in Q_0$, we get
$$\sum_{i\in Q_0} \Tr(K_{(\alpha,\sigma,\tau)}( H_i))=0.$$
 Moreover, let $\{\rho_i=\Tr(\log f_i^{(\varepsilon)})\}_{i\in Q_0}$ for a solution $f^{(\varepsilon)}$ of $\{L^\varepsilon_{{(\alpha,\sigma,\tau)i}}(f)=0\}_{i\in Q_0}$, then we have
$$(|Q_0|\sigma\Delta_g+2\varepsilon)\left(\sum_{i\in Q_0} \rho_i\right)=0,$$
which implies $\sum_{i\in Q_0} \rho_i=\log(\prod_{i\in Q_0}\det f_i^{(\varepsilon)})=0$, i.e. $\prod_{i\in Q_0}\det f_i^{(\varepsilon)}=1$.
 \end{proof}

From now on we fix a background metric $H$ on $\mathcal{E}$ as in the above proposition.
 For $f\in S^+(\mathcal{E},H)$, one defines $$\hat L_{(\alpha,\sigma,\tau)i}(\varepsilon, f)= f_i\circ L_{(\alpha,\sigma,\tau)i}^\varepsilon(f).$$
 Obviously, $\hat L_{(\alpha,\sigma,\tau)i}(\varepsilon,f)\in S( E_i, H_i)$. Denote by $\mathbb{L}_{(\alpha,\sigma,\tau)i}^{\varepsilon,f}$ the linearization of $\hat L_{(\alpha,\sigma,\tau)i}(\varepsilon, f)$ , which is  calculated as follows:
\begin{align*}
  &\mathbb{L}_{(\alpha,\sigma,\tau)i}^{\varepsilon,f}(\eta)=\frac{d}{dt}|_{t=0}\hat L_{(\alpha,\sigma,\tau)i}(\varepsilon,f+t\eta)\\=&\eta_i\circ K_{(\alpha,\sigma,\tau)}(H_i)+\sqrt{-1}\eta_i\circ ( \alpha_i\sigma_i\Lambda_+\bar\partial_{+}(f_i^{-1}\partial^+_{H_i}f_i)+(1-\alpha_i)\sigma_i\Lambda_-\bar\partial_{-}(f_i^{-1}\partial^-_{H_i}f_i))\\
  &-\sqrt{-1}f_i\circ(\alpha_i\sigma_i\Lambda_+\bar\partial_+(f_i^{-1}\circ\eta_i\circ f_i^{-1}\circ\partial_{H_i}^+f_i)+(1-\alpha_i)\sigma_i\Lambda_-\bar\partial_-(f_i^{-1}\circ\eta_i\circ f_i^{-1}\circ\partial_{H_i}^-f_i))\\
  &+\sqrt{-1}f_i\circ(\alpha_i\sigma_i\Lambda_+\bar\partial_+(f_i^{-1}\partial_{H_i}\eta_i)+(1-\alpha_i)\sigma_i\Lambda_-\bar\partial_-(f_i^{-1}\partial_{H_i}\eta_i))\\
  &+\eta_i\circ\left(\sum_{a\in h^{-1}(i)} \phi_a\circ (\phi_a)^{*\tilde H}-\sum_{a\in t^{-1}(i)} (\phi_a)^{*\tilde H}\circ\phi_a\right)\\
  &+f_i\circ\left(\sum_{a\in h^{-1}(i)} \phi_a\circ f^{-1}_{t(a)}\circ[\phi^{* H},\eta\circ f^{-1}]_a\circ f_{h(a)}-\sum_{a\in t^{-1}(i)}  f^{-1}_{t(a)}\circ[\phi^{* H},\eta\circ f^{-1}]_a\circ f_{h(a)}\circ\phi_a\right)\\
  &+\varepsilon\eta_i\circ\log f_i+\varepsilon f_i^{-1}\eta_i,
\end{align*}
where $\eta=(\eta_i)\in S(\mathcal{E},H)$, and $[\phi^{* H},\eta\circ f^{-1}]_a=\phi_a^{* H}\circ\eta_{h(a)}\circ f^{-1}_{h(a)}-\eta_{t(a)}\circ f^{-1}_{t(a)}\circ\phi_a^{* H}$.
It is clear that $\mathbb{L}_{(\alpha,\sigma,\tau)i}^{\varepsilon,f}$ extends to   a second-order elliptic differential operator of index zero between Sobolev spaces $L^p_k(S(\mathcal{E},H))$ and $L^p_{k-2} (S(\mathcal{E},H))$.

\begin{proposition}\label{prop3.3}
Let $\varepsilon\in (0,1]$, $\lambda\in\mathbb{R}$, $f\in S^+(\mathcal{E},H)$, $\eta\in S(\mathcal{E},H)$, and we denote $\eta_i^{f_i}=f_i^{-\frac{1}{2}}\circ\eta_i\circ f_i^{-\frac{1}{2}}$, $\phi_a^f=f_{h(a)}^{\frac{1}{2}}\circ\phi_a\circ f_{t(a)}^{-\frac{1}{2}}$. If $f$ is a solution of $\{L^\varepsilon_{(\alpha,\sigma,\tau)i}(f)=0\}_{i\in Q_0}$, and the equality $\mathbb{L}_{(\alpha,\sigma,\tau)i}^{\varepsilon,f}(\eta)+\lambda f_i\circ\log f_i=0$ holds at each vertex, then we have the following inequality
\begin{align*}
  \sum_{i\in Q_0}&\Big[P_i(|\eta_i^{f_i}|^2_{H_i})
  +\alpha_i\sigma_i(|(\partial^+_{H_i})^{f_i}\eta_i^{f_i}|_{H_i,g}^2+|\bar\partial_+^{f_i}\eta_i^{f_i}|_{H_i,g}^2)\\&+(1-\alpha_i)\sigma_i(|(\partial^-_{H_i})^{f_i}\eta_i^{f_i}|_{H_i,g}^2+|\bar\partial_-^{f_i}\eta_i^{f_i}|_{H_i,g}^2)\Big]\\
  &+2\sum_{a\in Q_1} \Big|[\phi^f, \eta^f]_a\Big|^2_{H}+2\varepsilon\sum_{i\in Q_0}\Big|\eta_i^{f_i}\Big|^2_{H_i}+2\lambda\sum_{i\in Q_0}\Big\langle\log f_i,\eta_i^{f_i}\Big\rangle_{H_i}\leq0,
\end{align*}
 where $(\partial^\pm_{H_i})^{f_i}=\mathrm{Ad}_{f_i^{-\frac{1}{2}}}\circ\partial^\pm_{H_i}\circ\mathrm{Ad}_{f_i^{\frac{1}{2}}}$ and $\bar\partial_\pm^{f_i}=\mathrm{Ad}_{f_i^{\frac{1}{2}}}\circ\bar \partial_\pm\circ\mathrm{Ad}_{f_i^{-\frac{1}{2}}}$ with the adjoint action $\mathrm{Ad}_\bullet\circ\star=\bullet\circ\star\circ\bullet^{-1}$.
\end{proposition}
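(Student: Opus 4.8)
The plan is to reduce the stated pointwise inequality to a Bochner--Weitzenb\"ock computation performed in the gauge $f_i^{1/2}$. The superscript-$f_i$ operators are introduced precisely so that conjugating everything by $f_i^{1/2}$ turns the asymmetric appearances of $f_i$ in $\mathbb{L}_{(\alpha,\sigma,\tau)i}^{\varepsilon,f}$ into symmetric expressions in $s_i:=\eta_i^{f_i}=f_i^{-1/2}\circ\eta_i\circ f_i^{-1/2}$. First I would multiply the hypothesis $\mathbb{L}_{(\alpha,\sigma,\tau)i}^{\varepsilon,f}(\eta)+\lambda f_i\circ\log f_i=0$ on the left and right by $f_i^{-1/2}$, rewrite each term using $\mathrm{Ad}_{f_i^{\pm1/2}}$, and record that $\mathrm{Ad}_{f_i^{-1/2}}\circ\partial^\pm_{H_i}\circ\mathrm{Ad}_{f_i^{1/2}}=(\partial^\pm_{H_i})^{f_i}$, $\mathrm{Ad}_{f_i^{1/2}}\circ\bar\partial_\pm\circ\mathrm{Ad}_{f_i^{-1/2}}=\bar\partial_\pm^{f_i}$, and $f_{h(a)}^{1/2}\circ\phi_a\circ f_{t(a)}^{-1/2}=\phi_a^f$. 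Pairing the resulting identity with $s_i$ in the pointwise $H_i$-metric, taking the real part, and summing over $i\in Q_0$ is the organizing step.

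The core is the second-order part. Applying the Bochner--Weitzenb\"ock identity for the operators $P_\pm^{H_i}=\sqrt{-1}\Lambda_\pm\bar\partial_\pm\partial^\pm_{H_i}$ converts the pairing of the conjugated second-order terms with $s_i$ into $P_i(|s_i|^2)$ minus the non-negative gradient terms $\alpha_i\sigma_i(|(\partial^+_{H_i})^{f_i}s_i|^2+|\bar\partial_+^{f_i}s_i|^2)+(1-\alpha_i)\sigma_i(|(\partial^-_{H_i})^{f_i}s_i|^2+|\bar\partial_-^{f_i}s_i|^2)$; the torsion contributions produced by the failure of $\omega_\pm$ to be closed are controlled exactly as in the proof of Proposition~\ref{prop2.12}, where the Gauduchon condition $dd^c_\pm\omega_\pm^{n-1}=0$ together with the $I_\pm$-holomorphicity of the $\phi_a$ causes the offending $\bar\partial_\pm(\omega_\pm^{n-1})$ terms to drop out.

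Next I would treat the algebraic terms. The zeroth-order curvature term $\eta_i\circ K_{(\alpha,\sigma,\tau)}(H_i)$ is eliminated by substituting the solution equation $L^\varepsilon_{(\alpha,\sigma,\tau)i}(f)=0$ itself, so that $K_{(\alpha,\sigma,\tau)}(\tilde H_i)$ is traded for the remaining $\phi$- and $\log f$-terms; this is the device that couples all the vertices. The quiver contributions $\sum_{a\in h^{-1}(i)}\phi_a\circ(\phi_a)^{*\tilde H}-\sum_{a\in t^{-1}(i)}(\phi_a)^{*\tilde H}\circ\phi_a$ together with the commutator terms $[\phi^{*H},\eta\circ f^{-1}]_a$, after conjugation and summation, reorganize by arrows: each $a\in Q_1$ contributes once through its head and once through its tail, and using the adjoint relation $H_{h(a)}(\phi_a u,v)=H_{t(a)}(u,\phi_a^{*H}v)$ these collect into the single non-negative term $2\sum_{a\in Q_1}|[\phi^f,\eta^f]_a|^2$. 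The $\varepsilon$-terms $\varepsilon(\eta_i\circ\log f_i+f_i^{-1}\eta_i)$ yield $2\varepsilon\sum_i|s_i|^2$, and the hypothesis term $\lambda f_i\circ\log f_i$ yields $2\lambda\sum_i\langle\log f_i,s_i\rangle$.

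The step I expect to be the main obstacle is the passage from an identity to the inequality: after the manipulations above the pairing is an equality, and one must show that the discarded terms have the correct sign. This comes down to an operator-monotonicity (L\"owner-type) estimate for the derivative of the matrix logarithm, guaranteeing that the contribution of $\varepsilon\eta_i\circ\log f_i$ beyond the clean $2\varepsilon|s_i|^2$ piece is non-negative and may be dropped; isolating this convexity term while keeping the bookkeeping of the conjugation and the Gauduchon torsion consistent is the delicate part. Once that sign is secured, collecting the gradient terms, the commutator terms, the $\varepsilon$-term, and the $\lambda$-term produces exactly the asserted inequality.
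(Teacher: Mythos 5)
Your route is in substance the paper's route: conjugate by $f_i^{1/2}$, use the solution equation $L^{\varepsilon}_{(\alpha,\sigma,\tau)i}(f)=0$ to kill every term of $\mathbb{L}^{\varepsilon,f}_{(\alpha,\sigma,\tau)i}(\eta)$ of the form $\eta_i\circ(\cdots)$ (the paper does this in one stroke via $\mathbb{L}^{\varepsilon,f}_{(\alpha,\sigma,\tau)i}(\eta)=\eta_i\circ L^{\varepsilon}_{(\alpha,\sigma,\tau)i}(f)+f_i\circ\tfrac{d}{dt}|_{t=0}L^{\varepsilon}_{(\alpha,\sigma,\tau)i}(f+t\eta)$), pair with $\eta_i^{f_i}$, regroup the arrow terms into $2\sum_{a\in Q_1}|[\phi^f,\eta^f]_a|^2_H$, and close with the operator-monotonicity estimate for the derivative of the matrix logarithm, which is precisely the paper's $\langle\Phi,\eta_i^{f_i}\rangle_{H_i}\geq|\eta_i^{f_i}|^2_{H_i}$ for $\Phi=\mathrm{Ad}_{f_i^{1/2}}\circ\tfrac{d}{dt}|_{t=0}\log(f_i+t\eta_i)$; you correctly single this out as the crux. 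There is, however, one step that fails as written: you claim the Weitzenb\"ock step converts the pairing of the second-order term into ``$P_i(|\eta_i^{f_i}|^2)$ \emph{minus} the non-negative gradient terms''. The correct identity, which is the one the paper uses, carries a plus sign:
\begin{align*}
2\,\mathrm{Re}\,\big\langle P^{H_i}_{f_i}(\eta_i^{f_i}),\eta_i^{f_i}\big\rangle_{H_i}
=\;&P_i\big(|\eta_i^{f_i}|^2_{H_i}\big)
+\alpha_i\sigma_i\Big(\big|(\partial^+_{H_i})^{f_i}\eta_i^{f_i}\big|^2_{H_i,g}+\big|\bar\partial_+^{f_i}\eta_i^{f_i}\big|^2_{H_i,g}\Big)\\
&+(1-\alpha_i)\sigma_i\Big(\big|(\partial^-_{H_i})^{f_i}\eta_i^{f_i}\big|^2_{H_i,g}+\big|\bar\partial_-^{f_i}\eta_i^{f_i}\big|^2_{H_i,g}\Big),
\end{align*}
where $P^{H_i}_{f_i}:=\alpha_i\sigma_i\sqrt{-1}\Lambda_+\bar\partial_+^{f_i}(\partial^+_{H_i})^{f_i}+(1-\alpha_i)\sigma_i\sqrt{-1}\Lambda_-\bar\partial_-^{f_i}(\partial^-_{H_i})^{f_i}$; this is the endomorphism version of the scalar identity $2uP_i(u)=P_i(u^2)+2\big(\alpha_i\sigma_i|\partial_+u|^2_g+(1-\alpha_i)\sigma_i|\partial_-u|^2_g\big)$. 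The sign is not cosmetic. With your minus sign the pairing of the full conjugated equation with $2\eta_i^{f_i}$ yields $\sum_i\big[P_i(|\eta_i^{f_i}|^2)-(\mathrm{gradient})\big]+2\sum_a|[\phi^f,\eta^f]_a|^2+2\varepsilon\sum_i\langle\Phi,\eta_i^{f_i}\rangle+2\lambda\sum_i\langle\log f_i,\eta_i^{f_i}\rangle=0$, and after applying the monotonicity estimate one only gets the asserted left-hand side bounded above by $2\sum_i(\mathrm{gradient})$, not by $0$. Since the entire content of the proposition is a sign-sensitive inequality, this step must be corrected before the conclusion can be assembled.

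A secondary point: the appeal to Gauduchon torsion cancellation (as in Proposition \ref{prop2.12}) is out of place here. The inequality of Proposition \ref{prop3.3} is pointwise, and neither your argument nor the paper's ever integrates by parts, so no $\bar\partial_\pm(\omega_\pm^{n-1})$ terms arise at all; the Gauduchon hypothesis is what validates the \emph{integrated} identities in Proposition \ref{prop2.12} and the degree theory, but it plays no role in this proposition. Indeed the pointwise character is essential for the way the result is used later (e.g., the maximum-principle argument in Proposition \ref{prop3.4}), so routing the proof through integral identities would weaken the statement rather than prove it.
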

\begin{proof}
Since $\mathbb{L}_{(\alpha,\sigma,\tau)i}^{\varepsilon,f}(\eta)=\eta_i\circ L^\varepsilon_{(\alpha,\sigma,\tau)i}(f)+f_i\circ\frac{d}{dt}|_{t=0}L_{(\alpha,\sigma,\tau)i}^\varepsilon(f+t\eta)$, we have
$$\frac{d}{dt}|_{t=0}L_{(\alpha,\sigma,\tau)i}^\varepsilon(f+t\eta)=-\lambda\log f_i.$$
Indeed, the left hand side has been calculated as
\begin{align*}
 &\frac{d}{dt}|_{t=0}L_{(\alpha,\sigma,\tau)i}^\varepsilon(f+t\eta)\\=&-\sqrt{-1}\left(\alpha_i\sigma_i\Lambda_+\bar\partial_+\Big(f_i^{-1}\circ\eta_i\circ f_i^{-1}\circ\partial_{H_i}^+f_i\Big)+(1-\alpha_i)\sigma_i\Lambda_-\bar\partial_-\Big(f_i^{-1}\circ\eta_i\circ f_i^{-1}\circ\partial_{H_i}^-f_i\Big)\right)\\
  &+\sqrt{-1}\Big(\alpha_i\sigma_i\Lambda_+\bar\partial_+\Big(f_i^{-1}\partial_{H_i}^+\eta_i\Big)+(1-\alpha_i)\sigma_i\Lambda_-\bar\partial_-\Big(f_i^{-1}\partial_{H_i}^-\eta_i\Big)\Big)\\
  &+\sum_{a\in h^{-1}(i)} \phi_a\circ f^{-1}_{t(a)}\circ\Big[\phi^{* H},\eta\circ f^{-1}\Big]_a\circ f_{h(a)}-\sum_{a\in t^{-1}(i)}  f^{-1}_{t(a)}\circ\Big[\phi^{* H},\eta\circ f^{-1}\Big]_a\circ f_{h(a)}\circ\phi_a\\
  &+\varepsilon \frac{d}{dt}|_{t=0}\log(f_i+t\eta_i)\\
  =&f_i^{-\frac{1}{2}}\circ \left(P^{H_i}_{f_i}\Big(\eta_i^{f_i}\Big)+\sum_{a\in h^{-1}(i)} \phi_a^f \circ\Big[ (\phi^f)^{*H}, \eta^f \Big]_a-\sum_{a\in t^{-1}(i)}\Big[ (\phi^f)^{*H}, \eta^f \Big]_a\circ\phi_a^f+\varepsilon\Phi \right)\circ f_i^{\frac{1}{2}},
\end{align*}
where $P^{H_i}_{f_i}=\alpha_i\sigma_iP^{H_i}_{+f_i}+(1-\alpha_i)\sigma_iP^{H_i}_{-f_i}$ for $P^{H_i}_{\pm f_i}=\sqrt{-1}\Lambda_\pm\bar\partial_\pm^{f_i}(\partial^\pm_{H_i})^{f_i}$ and  $\Phi=\mathrm{Ad}_{f_i^{\frac{1}{2}}}\circ\frac{d}{dt}|_{t=0}\log(f_i+t\eta_i)$.
Taking inner product with $\eta_i^{f_i}$ and then taking sum over all vertices on both sides, we have
\begin{align*}
  \sum_{i\in Q_0}&\left[P_i\big(\big|\eta_i^{f_i}\big|^2_{H_i}\big)+\alpha_i\sigma_i\Big(\big|\big(\partial^+_{H_i}\big)^{f_i}\eta_i^{f_i}\big|_{H_i,g}^2+\big|\bar\partial_+^{f_i}\eta_i^{f_i}\big|_{H_i,g}^2\Big)\right.\\
  &\left.+(1-\alpha_i)\sigma_i\Big(\big|\big(\partial^-_{H_i}\big)^{f_i}\eta_i^{f_i}\big|_{H_i,g}^2+\big|\bar\partial_-^{f_i}\eta_i^{f_i}\big|_{H_i,g}^2\Big)\right]\\
  &+2\sum_{a\in Q_1}\big|\big[\phi^f,\eta^f\big]_a\big|^2_H+2\varepsilon\sum_{i\in Q_0} \big\langle\Phi,\eta_i^{f_i}\big\rangle_{H_i}+2\lambda \sum_{i\in Q_0}\big\langle\log f_i,\eta_i^{f_i}\big\rangle_{H_i}=0.
\end{align*}
The estimate $\big\langle\Phi,\eta_i^{f_i}\big\rangle_{H_i}\geq \big|\eta_i^{f_i}\big|^2_{H_i}$ leads to the desired inequality.
\end{proof}

\begin{proposition}\label{prop3.4}
$J$ is a non-empty open subset of $(0,1]$.
\end{proposition}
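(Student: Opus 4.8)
The plan is to treat non-emptiness and openness separately. Non-emptiness is immediate: the background metric $H$ was produced in Proposition \ref{prop3.2}(1) precisely so that the system $\{L^1_{(\alpha,\sigma,\tau)i}(f)=0\}_{i\in Q_0}$ has the positive-definite solution $f^{(1)}$, so $1\in J$. For openness I would apply the implicit function theorem to the map
$$\hat L_{(\alpha,\sigma,\tau)}(\varepsilon,f)=\big(\hat L_{(\alpha,\sigma,\tau)i}(\varepsilon,f)\big)_{i\in Q_0},$$
regarded as a smooth map $(0,1]\times L^p_k(S^+(\mathcal{E},H))\to L^p_{k-2}(S(\mathcal{E},H))$ with $p>2n$ and $k\geq 2$, so that $L^p_k$ embeds into continuous sections and is closed under composition; passing to $\hat L$ rather than $L^\varepsilon$ is what gives a target space $S(E_i,H_i)$ independent of $f$. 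Its partial derivative in $f$ is exactly the operator $\mathbb{L}^{\varepsilon,f}_{(\alpha,\sigma,\tau)}$ computed in the excerpt, which is elliptic of index zero.

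The heart of the argument is to show that, at a solution $(\varepsilon_0,f_0)$ with $\varepsilon_0>0$, the linearization $\mathbb{L}^{\varepsilon_0,f_0}_{(\alpha,\sigma,\tau)}$ is injective. Given $\eta$ in its kernel, I would apply Proposition \ref{prop3.3} with $\lambda=0$, which yields
\begin{align*}
\sum_{i\in Q_0}\Big[P_i(|\eta_i^{f_i}|^2_{H_i})+\big(\text{nonnegative Dirichlet terms}\big)\Big]+2\sum_{a\in Q_1}\big|[\phi^f,\eta^f]_a\big|^2_{H}+2\varepsilon_0\sum_{i\in Q_0}\big|\eta_i^{f_i}\big|^2_{H_i}\leq 0.
\end{align*}
Integrating over $X$ and invoking the Gauduchon condition $dd^c_\pm\omega_\pm^{n-1}=0$, which forces $\int_X P_i(u)\,d\Vol_g=0$ for every real function $u$, I am left with a sum of manifestly nonnegative integrals that is $\leq 0$; since $\varepsilon_0>0$ the term $2\varepsilon_0\int_X\sum_{i}|\eta_i^{f_i}|^2\,d\Vol_g$ must vanish, hence $\eta=0$. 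Because $\mathbb{L}^{\varepsilon_0,f_0}_{(\alpha,\sigma,\tau)}$ has index zero, injectivity forces surjectivity, so it is a Banach-space isomorphism.

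With this isomorphism the implicit function theorem produces $\delta>0$ and a family $\varepsilon\mapsto f^{(\varepsilon)}$ solving $\hat L_{(\alpha,\sigma,\tau)}(\varepsilon,f^{(\varepsilon)})=0$ on $(\varepsilon_0-\delta,\varepsilon_0+\delta)\cap(0,1]$; as each $f^{(\varepsilon)}$ is invertible this is equivalent to $\{L^\varepsilon_{(\alpha,\sigma,\tau)i}(f^{(\varepsilon)})=0\}_{i\in Q_0}$. After shrinking $\delta$ the endomorphism $f^{(\varepsilon)}$ stays positive-definite by continuity from $f_0\in S^+(\mathcal{E},H)$, and elliptic bootstrapping on the system $\hat L_{(\alpha,\sigma,\tau)}=0$ gives smoothness, so this neighborhood lies in $J$. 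I expect the injectivity of the linearization to be the only real obstacle, and it is exactly there that the positivity $\varepsilon_0>0$ is used: at $\varepsilon=0$ the zeroth-order term in Proposition \ref{prop3.3} drops out and injectivity may fail, which is the structural reason that one obtains openness in $(0,1]$ rather than in the whole of $[0,1]$.
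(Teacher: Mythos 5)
Your proposal is correct and follows essentially the same route as the paper: nonemptiness from Proposition \ref{prop3.2} (1) giving $1\in J$, and openness via the implicit function theorem for Banach spaces, with injectivity of the index-zero elliptic linearization $\mathbb{L}_{(\alpha,\sigma,\tau)}^{\varepsilon,f}$ deduced from Proposition \ref{prop3.3} with $\lambda=0$. The only (harmless) difference is the final step: the paper concludes $|\eta_i^{f_i}|^2_{H_i}=0$ pointwise by the maximum principle using the positivity of $P^{H_i}$, whereas you integrate the inequality over $X$ and use the Gauduchon identity $\int_X P_i(u)\,d\Vol_g=0$ to isolate the term $2\varepsilon_0\sum_{i\in Q_0}\|\eta_i^{f_i}\|^2_{L^2}$; both arguments are valid under the paper's standing assumptions.
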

\begin{proof}This claim is an application of implicit function theorem for Banach spaces. We only need to show the operator $\mathbb{L}_{(\alpha,\sigma,\tau)}^{\varepsilon,f}:=\bigoplus_{i\in Q_0}\mathbb{L}_{(\alpha,\sigma,\tau)i}^{\varepsilon,f}:L^p_k(S(H,\mathcal{E}))\rightarrow L^p_{k-2}(S(H,\mathcal{E}))$ sending $\eta\in L^p_k(S(H,\mathcal{E}))$ to $(\mathbb{L}_{(\alpha,\sigma,\tau)i}^{\varepsilon,f}(\eta))$ is injective. Indeed, assume $\mathbb{L}_{(\alpha,\sigma,\tau)i}^{\varepsilon,f}(\eta)=0$, then since $P^{H_i}$ is a positive operator and $\varepsilon,\alpha_i, 1-\alpha_i,\sigma_i>0$, we find that  $|\eta_i^{f_i}|^2_{H_i}=0$ for $\forall i\in Q_0$ due to maximal principle. Therefore,  it follows that  $\eta=0$ from the above proposition (put $\lambda=0$). Moreover, one can show that any solution $f\in L_k^p(S^+(H,\mathcal{E}))$ is in fact smooth by some rather standard arguments.
\end{proof}

Let $\varepsilon_0\in(0,1]$ and suppose there exists  a solution  $f^{(\varepsilon)}\in S^+(\mathcal{E},H)$ of  $\{L^\varepsilon_{(\alpha,\sigma,\tau)i}(f)=0\}_{i\in Q_0}$ for any $\varepsilon>\varepsilon_0>0$ with $\varepsilon\in(0,1]$.
From the proof of Proposition \ref{prop3.2}, we may assume $\int_X\log(\prod_{i\in Q_0}\det f_i^{(\varepsilon)})d\Vol_g=0$.
Let $\mu^{(\varepsilon)}=\frac{df^{(\varepsilon)}}{d\varepsilon}$, $\nu^{(\varepsilon)}=(f^{(\varepsilon)})^{-\frac{1}{2}}\circ\mu^{(\varepsilon)}\circ(f^{(\varepsilon)})^{-\frac{1}{2}}$, $\gamma^{(\varepsilon)}=\mathrm{Ad}_{(f^{(\varepsilon)})^{-\frac{1}{2}}}\circ\nu^{(\varepsilon)}$, and $m^{(\varepsilon)}=(m_i^{(\varepsilon)})$ for $m_i^{(\varepsilon)}=\max\limits_X\{|\log f_i^{(\varepsilon)}|\}$.
Sometimes for convenience,  we will drop the upper index $(\varepsilon)$ when there is no ambiguity.

\begin{proposition}\label{prop3.5}
Let $\mathcal{E}=(E,\phi)$ be a simple $I_\pm$-holomorphic $Q$-bundle. Then there exist  positive constant $C(m), D(m)$ depending only on $m$ such that we have   the following inequalities
\begin{align*}
 (1)\ \ \ &\sum\limits_{i\in Q_0}(\alpha_i\sigma_i||\bar\partial_+^{f_i} \nu_i||_{L^2}^2+(1-\alpha_i)\sigma_i||\bar\partial_-^{f_i} \nu_i||_{L^2}^2)+\sum\limits_{a\in Q_1}||[\phi^f,\nu]_a||_{L^2}^2\geq C(m)\sum\limits_{i\in Q_0} ||\nu_i||_{L^2}^2,\\
 (2) \ \ \ &\max\limits_X|\mu_i|_{H_i}\leq D(m)  \text{ for any vertex }i\in Q_0.
\end{align*}
\end{proposition}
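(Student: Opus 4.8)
The plan is to treat the two inequalities separately. Inequality (1) is the genuinely new coercivity estimate and rests entirely on the simplicity of $\mathcal{E}$; I would establish an \emph{untwisted} Poincar\'e inequality for the fixed background data and then transfer it to the conjugated operators, which is exactly what manufactures the dependence on $m$. Inequality (2) is then deduced from (1) together with the integrated form of the Bochner-type identity already recorded in Proposition \ref{prop3.3} and standard elliptic regularity; I do not expect a direct maximum principle to give (2), since at a maximum of $\sum_i|\nu_i|^2$ the leading term and the zeroth-order term carry the same sign and only produce an $\varepsilon$-dependent bound, so the $\varepsilon$-uniformity must come through the $L^2$ control furnished by (1).

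For (1) I would introduce the non-negative quadratic form
\[
Q_0(\gamma)=\sum_{i\in Q_0}\Big(\alpha_i\sigma_i\|\bar\partial_+\gamma_i\|_{L^2}^2+(1-\alpha_i)\sigma_i\|\bar\partial_-\gamma_i\|_{L^2}^2\Big)+\sum_{a\in Q_1}\|[\phi,\gamma]_a\|_{L^2}^2
\]
on $S(\mathcal{E},H)$. Its associated operator is second-order, self-adjoint, non-negative and elliptic (for a nonzero real covector $\xi$ one already has $|\xi_\pm^{0,1}|^2>0$), so on the compact $X$ it has discrete spectrum and finite-dimensional kernel. A kernel element is an $H$-Hermitian $\gamma$ with $\bar\partial_+\gamma_i=\bar\partial_-\gamma_i=0$ and $[\phi,\gamma]_a=0$, that is, an $I_\pm$-holomorphic $Q$-endomorphism of $\mathcal{E}$; by simplicity it is a real multiple of the identity, so on the $L^2$-orthogonal complement $\{\sum_i\int_X\Tr\gamma_i\,d\Vol_g=0\}$ one gets coercivity $Q_0(\gamma)\geq c_0\|\gamma\|_{L^2}^2$ with $c_0>0$ depending only on the fixed data. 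To apply this I check that the relevant $\nu$ lands in this complement: differentiating the normalization $\prod_i\det f_i^{(\varepsilon)}=1$ of Proposition \ref{prop3.2} gives $\sum_i\Tr\nu_i=\tfrac{d}{d\varepsilon}\sum_i\Tr\log f_i^{(\varepsilon)}=0$ pointwise, and $\Tr\gamma_i=\Tr\nu_i$ for $\gamma_i=\mathrm{Ad}_{f_i^{-1/2}}\nu_i$, so $\gamma$ is trace-free. Finally I transfer the estimate: since $\bar\partial_\pm^{f_i}\nu_i=\mathrm{Ad}_{f_i^{1/2}}\bar\partial_\pm\gamma_i$ and $\nu_i=\mathrm{Ad}_{f_i^{1/2}}\gamma_i$, and conjugation by $f_i^{\pm1/2}$ distorts $H_i$-norms by at most $e^{m_i}$, the twisted terms dominate $e^{-2\bar m}$ times the untwisted ones (with $\bar m=\max_i m_i$), while $\|\gamma_i\|_{L^2}\geq e^{-m_i}\|\nu_i\|_{L^2}$ and the commutators transform identically; combining these yields the left-hand side of (1) $\geq c_0 e^{-4\bar m}\|\nu\|_{L^2}^2$, so $C(m)=c_0e^{-4\bar m}$.

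For (2) I would differentiate the continuity-method equation $L^\varepsilon_{(\alpha,\sigma,\tau)i}(f^{(\varepsilon)})=0$ in $\varepsilon$. With $\hat L_i(\varepsilon,f)=f_i\circ L^\varepsilon_{(\alpha,\sigma,\tau)i}(f)$ and $\partial_\varepsilon\hat L_i=f_i\circ\log f_i$, one obtains $\mathbb{L}^{\varepsilon,f}_{(\alpha,\sigma,\tau)i}(\mu)+f_i\circ\log f_i=0$, which is precisely the hypothesis of Proposition \ref{prop3.3} with $\eta=\mu$, $\lambda=1$ and $\mu^{f}=\nu$. Integrating the resulting pointwise inequality and using that the Gauduchon condition forces $\int_X P_i(|\nu_i|^2)\,d\Vol_g=0$, the gradient and commutator terms survive and, bounded below by the left-hand side of (1), give
\[
C(m)\|\nu\|_{L^2}^2\leq -2\sum_{i\in Q_0}\int_X\langle\log f_i,\nu_i\rangle_{H_i}\,d\Vol_g\leq 2\bar m\,\mathrm{Vol}(X)^{1/2}|Q_0|^{1/2}\|\nu\|_{L^2},
\]
hence the $\varepsilon$-independent bound $\|\nu\|_{L^2}\leq C'(m)$. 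Since $\nu$ solves the second-order elliptic equation above with right-hand side $-\log f$ (bounded by $m$) and coefficients controlled by $m$, $L^p$ elliptic estimates followed by Sobolev embedding upgrade this to $\max_X|\nu_i|\leq D(m)$, and then $\max_X|\mu_i|_{H_i}\leq D(m)$ because $\mu_i=f_i^{1/2}\nu_i f_i^{1/2}$ with $f_i$ bounded by $m$.

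The main obstacle is the coercivity in (1): one must verify ellipticity of the combined $I_+/I_-$ operator and, above all, identify its kernel with the space of $I_\pm$-holomorphic $Q$-endomorphisms so that simplicity applies. This is where the generalized K\"ahler structure genuinely enters, since the kernel condition simultaneously constrains $\bar\partial_+$ and $\bar\partial_-$, and one must be sure that no nonzero Hermitian trace-free $\gamma$ is annihilated by both together with $[\phi,\cdot]$. A secondary technical point, needed so that the elliptic-estimate constant in (2) depends only on $m$, is the elliptic bootstrap on $L^\varepsilon_{(\alpha,\sigma,\tau)i}(f)=0$ converting the $C^0$ bound $m$ on $\log f$ into control of the derivatives of $f$ uniformly in $\varepsilon$.
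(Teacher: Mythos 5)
Your treatment of part (1) and of the $L^2$ bound in part (2) is essentially the paper's own argument: the untwisted quadratic form is $\langle\langle\Delta(\gamma),\gamma\rangle\rangle_{L^2}$ for the elliptic self-adjoint operator $\Delta=\bigoplus_{i\in Q_0}\Delta_i$, simplicity identifies $\Ker\Delta$ with $\mathbb{C}\,\mathrm{Id}_{\mathcal{E}}$, the trace normalization places $\gamma$ orthogonally to the kernel so that the smallest positive eigenvalue gives coercivity, and conjugation by $f^{\pm 1/2}$ transfers the estimate at the cost of $m$-dependent constants; then Proposition \ref{prop3.3} with $\eta=\mu$, $\lambda=1$, integrated over $X$ (the $P_i$ terms integrating to zero by the Gauduchon condition), combined with (1), yields $||\nu_i||_{L^2}\le C_3(m)$. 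One small correction: the pointwise normalization $\prod_i\det f_i^{(\varepsilon)}=1$ is available only when all $\sigma_i$ coincide (Proposition \ref{prop3.2}(2)); in general the paper assumes only $\int_X\log\bigl(\prod_i\det f_i^{(\varepsilon)}\bigr)d\Vol_g=0$, but this integrated normalization already gives $\sum_i\int_X\Tr(\gamma_i)\,d\Vol_g=0$, which is all your spectral-gap argument needs.

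The genuine gap is in your last step, the upgrade from $||\nu||_{L^2}\le C'(m)$ to $\max_X|\nu_i|\le D(m)$ by $L^p$ elliptic estimates ``with coefficients controlled by $m$''. The coefficients of the linearized operator $\mathbb{L}^{\varepsilon,f}_{(\alpha,\sigma,\tau)i}$ involve $f_i^{-1}\partial^{\pm}_{H_i}f_i$, i.e.\ first derivatives of $f$, and a $C^0$ bound on $\log f$ (which is what $m$ controls) gives no uniform bound on these, so the elliptic-estimate constants cannot be taken to depend only on $m$. You acknowledge this as a ``secondary technical point'' to be supplied by a bootstrap on $L^\varepsilon_{(\alpha,\sigma,\tau)i}(f)=0$, but that bootstrap is precisely Proposition \ref{prop3.8} (the bound $||f_i||_{L_2^p}\le C(m^\prime)$), whose proof in the paper invokes Proposition \ref{prop3.5}(2) twice; as written, your argument for (2) is therefore circular, and no independent route to the needed derivative control is sketched. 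The paper avoids this entirely: it does not discard the term $P_i(|\nu_i|^2_{H_i})$ when using Proposition \ref{prop3.3}, but keeps it pointwise, obtaining the subsolution inequality $\sum_i P_i(|\nu_i|^2_{H_i})\le\sum_i(|\nu_i|^2_{H_i}+m_i^2)$, where $P_i$ is the \emph{fixed} background operator acting on functions, with coefficients independent of $f$ and $\varepsilon$; the mean-value inequality for subsolutions of a fixed elliptic operator (Lemma 3.2.2 of \cite{LT}) then converts the $L^2$ bound into $\max_X|\nu_i|^2_{H_i}\le C_4\bigl(||\nu||_{L^2}^2+\sum_i m_i^2\bigr)\le C_5(m)$, and $\mu_i=f_i^{1/2}\circ\nu_i\circ f_i^{1/2}$ finishes. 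Incidentally, this shows your opening claim that a maximum-principle argument cannot give (2) is misdirected: the quantitative subsolution estimate is exactly the maximum-principle-type tool that incorporates the $L^2$ control you correctly identified as essential.
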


\begin{proof} (1) Firstly, we have the inequality
\begin{align*}
  &\sum_{i\in Q_0}(\alpha_i\sigma_i|\bar\partial_+^{f_i} \nu_i|^2_{H_i,g}+(1-\alpha_i)\sigma_i|\bar\partial_-^{f_i} \nu_i|^2_{H_i,g})+\sum_{a\in Q_1}|[\phi^f,\nu]_a|^2_H\\
  \geq&C_0(m)\left(\sum_{i\in Q_0}(\alpha_i\sigma_i|\bar\partial_+ \gamma_i|^2_{H_i,g}+(1-\alpha_i)\sigma_i|\bar\partial_- \gamma_i|^2_{H_i,g})+\sum_{a\in Q_1}|[\phi,\gamma]_a|^2_H\right).
\end{align*}
Integrating both sides over $X$ leads to
\begin{align*}
  &\sum_{i\in Q_0}(\alpha_i\sigma_i||\bar\partial_+^f \nu_i||_{L^2}^2+(1-\alpha_i)\sigma_i||\bar\partial_-^f \nu_i||_{L^2}^2)+\sum_{a\in Q_1}||[\phi^f,\nu]_a||_{L^2}^2\\
  \geq&C_0(m)\left(\sum_{i\in Q_0}(\alpha_i\sigma_i||\bar\partial_+ \gamma_i||_{L^2}^2+(1-\alpha_i)\sigma_i||\bar\partial_- \gamma_i||_{L^2}^2)+\sum_{a\in Q_1}||[\phi,\gamma]_a||_{L^2}^2\right).
\end{align*}
Define the operator $\Delta_i:L^p_k(S(\mathcal{E},H))\rightarrow L^p_{k-2}(S(E_i, H_i))$ as
\begin{align*}
  \Delta_i\eta=\alpha_i\sigma_i\bar\partial_+^{*H_i,g}\bar\partial_+\eta_i+(1-\alpha_i)\sigma_i\bar\partial_-^{*H_i,g}\bar\partial_-\eta_i+\sum_{a\in t^{-1}(i)}\phi^{*H}_a\circ [\phi, \eta]_a-\sum_{a\in h^{-1}(i)}[\phi, \eta]_a\circ\phi_a^{*H},
\end{align*}
for $\eta\in L^p_k(S(H,\mathcal{E}))$, and define $$\Delta=\bigoplus_{i\in Q_0}\Delta_i:L^p_k(S(H,\mathcal{E}))\rightarrow L^p_{k-2}(S(H,\mathcal{E}))$$
as $\Delta(\eta)=(\Delta_i(\eta))$.
 Then we have
$$\sum_{i\in Q_0}(\alpha_i\sigma_i|\bar\partial_+ \gamma_i|^2_{H_i,g}+(1-\alpha_i)\sigma_i|\bar\partial_- \gamma_i|^2_{H_i,g})+\sum_{a\in Q_1}|[\phi,\gamma]_a|^2_H=\langle\Delta(\gamma),\gamma\rangle_H.$$
Obviously, $\Delta$ is an elliptic self-adjoint positive operator, and the assumption that $\mathcal{E}$ is simple implies $\Ker \Delta=\mathbb{C }\mathrm{Id}_\mathcal{E}$.  Therefore,
$$\langle\langle\Delta(\gamma),\gamma\rangle\rangle_{L^2}\geq \kappa ||\gamma||^2_{L^2}=\kappa\sum_{i\in Q_0}||\gamma_i||^2_{L^2},$$
where $\kappa$ is the smallest positive eigenvalue of $\Delta$ because we have the restriction
$$\sum_{i\in Q_0}\int_X\Tr(\gamma_i)d\Vol_g=\frac{d}{d\varepsilon}\int_X\log\left(\prod_{i\in Q_0}\det f_i\right)d\Vol_g=0.$$
The desired inequality is then obtained by $\sum_{i\in Q_0}|\gamma_i|^2_{H_i}\geq C_1(m)\sum_{i\in Q_0}|\nu_i|^2_{H_i}$.

(2) Since $\frac{d}{d\varepsilon}\hat L_{(\alpha,\sigma,\tau)i}(\varepsilon,f)=\mathbb{L}_{(\alpha,\sigma,\tau)i}^{\varepsilon,f}(\mu)+f_i\circ\log f_i=0$, applying Proposition \ref{prop3.3}, we have
\begin{align*}
  &\sum_{i\in Q_0}\left[\alpha_i\sigma_i||\bar\partial_+^{f_i}\nu_i||_{L^2}^2+(1-\alpha_i)\sigma_i||\bar\partial_-^{f_i}\nu_i||_{L^2}^2\right]
  +\sum_{a\in Q_1}||[\phi^f,\nu]_a||^2_{L^2}\\ \leq &2\sum_{i\in Q_0}||\log f_i||_{L^2}||\nu_i||_{L^2}\leq C_2(m)\sum_{i\in Q_0}||\nu_i||_{L^2}.
\end{align*}
The inequality in (1) gives rise to $$C(m) \sum_{i\in Q_0}||\nu_i||^2_{L^2}\leq C_2(m)\sum_{i\in Q_0}||\nu_i||_{L^2},$$ hence  $||\nu_i||_{L^2}\leq C_3(m)$  for any vertex $i\in Q_0$.
On the other hand, again by Proposition \ref{prop3.3}, there exists a second-order elliptic operator $P$ such that
$$P\left(\sum_{i\in Q_0}|\nu_i|^2_{H_i}\right)\leq \sum_{i\in Q_0}P_i(|\nu_i|^2_{H_i})\leq\sum_{i\in Q_0}(|\nu_i|^2_{H_i}+m_ i^2),$$
which implies
$$\sum_{i\in Q_0}\max_X|\nu_i|^2_{H_i}\leq C_4\left(\int_X\sum_{i\in Q_0}|\nu_i|^2_{H_i}d\Vol_g+\sum_{i\in Q_0}m_ i^2\right)\leq C_5(m),$$
thus $\max\limits_X|\mu_i|_{H_i}\leq D(m)$  for any vertex $i\in Q_0$.
\end{proof}

\begin{proposition}\label{prop3.6}
For any $f\in S^+(\mathcal{E},H)$, we have the following inequalities
 \begin{align*}
  {(1)} \ \ \     &\sum_{i\in Q_0}\left\langle\sum_{a\in h^{-1}(i)} \phi_a\circ f_{t(a)}^{-1} \circ(\phi_a)^{*H}\circ f_{i}-\sum_{a\in t^{-1}(i)}f_{i}^{-1} \circ(\phi_a)^{*H}\circ f_{h(a)}\circ\phi_a,\log f_i\right\rangle_{H_i}\\
  \geq &\sum_{i\in Q_0}\left\langle\sum_{a\in h^{-1}(i)} \phi_a(\phi_a)^{*H}-\sum_{a\in t^{-1}(i)}(\phi_a)^{*H}\circ\phi_a,\log f_i\right\rangle_{H_i}\\
  {(2)} \ \ \      &\left\langle\sqrt{-1}(\alpha_i\sigma_i\Lambda_+\bar\partial_{+}
    (f_i^{-1}\partial^+_{H_i}f_i)+(1-\alpha_i)\sigma_i\Lambda_-\bar\partial_{-}(f_i^{-1}\partial^-_{H_i}f_i)),\log f_i\right\rangle_{H_i}\geq \frac{1}{2}P_i(|\log f_i|^2_{H_i}).
        \end{align*}
\end{proposition}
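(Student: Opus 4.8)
The plan is to establish both inequalities \emph{pointwise} on $X$, writing $s_i=\log f_i$ for the $H_i$-Hermitian endomorphism of $E_i$ attached to $f_i=e^{s_i}$. In both cases the mechanism is the same: diagonalize $s_i$ (and, for the arrow terms in (1), the endomorphisms at the two ends of each arrow) in pointwise $H$-orthonormal eigenframes, after which each inequality collapses to an elementary scalar inequality for the exponential function. Throughout I use that for an $H$-Hermitian $s$ one has $\langle A,s\rangle_H=\Tr(As)$ for any endomorphism $A$.

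For (1) I would first reorganize the sum over vertices into a sum over arrows. A single arrow $a$ with $t(a)=p$ and $h(a)=q$ contributes, on the left-hand side,
$$\big\langle\phi_a\circ f_p^{-1}\circ\phi_a^{*H}\circ f_q,\,s_q\big\rangle_{H_q}-\big\langle f_p^{-1}\circ\phi_a^{*H}\circ f_q\circ\phi_a,\,s_p\big\rangle_{H_p},$$
and on the right-hand side the same expression with $f_p$ and $f_q$ replaced by the identity. Fixing a point of $X$, I choose an $H_p$-orthonormal eigenframe of $s_p$ with eigenvalues $\mu_l$ and an $H_q$-orthonormal eigenframe of $s_q$ with eigenvalues $\nu_k$, and let $(\psi_{kl})$ be the matrix of $\phi_a$ in these frames. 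Since $f_p=e^{s_p}$ and $f_q=e^{s_q}$ are then diagonal, a direct trace computation shows that the arrow's contribution equals
$$\sum_{k,l}|\psi_{kl}|^2\,(\nu_k-\mu_l)\,e^{\,\nu_k-\mu_l}\qquad\text{on the left, and}\qquad \sum_{k,l}|\psi_{kl}|^2\,(\nu_k-\mu_l)\qquad\text{on the right.}$$
Hence (1) reduces to the elementary inequality $x(e^{x}-1)\ge0$, valid for all $x\in\mathbb{R}$, applied with $x=\nu_k-\mu_l$ and summed over $k,l$ and over all arrows.

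For (2) I would split the statement into its two sign-components. Because $P_i=\alpha_i\sigma_iP^{H_i}_++(1-\alpha_i)\sigma_iP^{H_i}_-$ with $P^{H_i}_\pm=\sqrt{-1}\Lambda_\pm\bar\partial_\pm\partial^\pm_{H_i}$, and the coefficients $\alpha_i\sigma_i$ and $(1-\alpha_i)\sigma_i$ are positive, it is enough to prove, for each sign,
$$\big\langle\sqrt{-1}\Lambda_\pm\bar\partial_\pm(f_i^{-1}\partial^\pm_{H_i}f_i),\,s_i\big\rangle_{H_i}\ \geq\ \tfrac12\,\sqrt{-1}\Lambda_\pm\bar\partial_\pm\partial_\pm\big(|s_i|^2_{H_i}\big).$$
This is the classical pointwise Hermitian--Einstein inequality (see \cite{LT}) for the Chern connection $D^\pm_{H_i}=\partial^\pm_{H_i}+\bar\partial_\pm$. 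I would prove it by diagonalizing $s_i$ in an $H_i$-orthonormal eigenframe with eigenvalues $\lambda_j$ and expanding $f_i^{-1}\partial^\pm_{H_i}f_i=e^{-s_i}\partial^\pm_{H_i}e^{s_i}$ via the Duhamel formula $\partial^\pm_{H_i}e^{s_i}=\int_0^1 e^{us_i}(\partial^\pm_{H_i}s_i)e^{(1-u)s_i}\,du$; in the eigenframe its $(j,k)$-entry becomes $(\partial^\pm_{H_i}s_i)_{jk}\cdot\frac{1-e^{-(\lambda_j-\lambda_k)}}{\lambda_j-\lambda_k}$. Expanding both sides in this frame, the difference of the two sides takes the form
$$\sqrt{-1}\Lambda_\pm\sum_{j,k}c(\lambda_j,\lambda_k)\,(\partial^\pm_{H_i}s_i)_{jk}\wedge\overline{(\partial^\pm_{H_i}s_i)_{jk}},$$
a sum of nonnegative multiples of $|(\partial^\pm_{H_i}s_i)_{jk}|^2$ after contraction with $\Lambda_\pm$, where the nonnegativity of the weights $c(\lambda_j,\lambda_k)$ is an elementary consequence of the convexity of $x\mapsto e^x$ (e.g.\ of $\sinh(u)/u\ge1$).

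I expect (1) to be essentially routine once the reorganization over arrows is in place, the only content being the scalar inequality $x(e^x-1)\ge0$. The main obstacle is (2): the careful expansion of $\sqrt{-1}\Lambda_\pm\bar\partial_\pm(e^{-s_i}\partial^\pm_{H_i}e^{s_i})$ and of $\tfrac12\sqrt{-1}\Lambda_\pm\bar\partial_\pm\partial_\pm|s_i|^2_{H_i}$ in the eigenframe, where one must keep track of the action of $\bar\partial_\pm$ on the (only pointwise defined) eigenframe, which is exactly what produces the off-diagonal eigenvalue factors, and then verify that the resulting weights $c(\lambda_j,\lambda_k)$ are nonnegative. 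It is here that one uses that $D^\pm_{H_i}$ is the Chern connection compatible with $I_\pm$, so that $(\partial^\pm_{H_i}s_i)^{*H_i}=\bar\partial_\pm s_i$ and the two sides match term by term.
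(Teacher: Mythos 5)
Your proposal is correct and follows essentially the same route as the paper: for part (1) the paper likewise diagonalizes the $f_i$ pointwise in $H_i$-unitary eigenframes, rewrites the vertex sums as a single sum over arrows, and reduces the claim to the scalar inequality $x(e^x-1)\geq 0$. For part (2) the paper gives no argument at all---it simply says the proof is similar to Lemma 5.14 of \cite{Hu}---and your eigenframe/Duhamel sketch is exactly the standard proof underlying that cited inequality (the difference of the two sides comes out as $\sum_{j,k}\frac{1-e^{-(\lambda_j-\lambda_k)}}{\lambda_j-\lambda_k}\,|(\partial^{\pm}_{H_i}s_i)_{jk}|^2$, and the weight equals $e^{-x/2}\,\frac{\sinh(x/2)}{x/2}>0$, confirming your positivity claim), so there is nothing to object to.
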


\begin{proof}(1) At each point of $X$, we write
\begin{align*}
 f_i=&\sum_{A_i=1}^{r_i}\exp{(\theta_{A_i})}e_{ A_i}\otimes e^{A_i},\\
 \phi_a=&\sum_{A_{h(a)}=1}^{r_{h(a)}}\sum_{B_{t(a)}=1}^{r_{t(a)}}(\phi_a)_{A_{h(a)}}^{B_{t(a)}}e_{A_{h(a)}}\otimes e^{B_{t(a)}},
\end{align*}
where $r_i=\rk(E_i)$, $\{e_{A_i}\}_{A_i}$ forms  a $H_i$-unitary frame of $E_i$ and $\{e^{A_i}\}_{A_i}$ stands for the dual frame, and $\theta_{A_i}$'s  are real numbers.
Then we calculate pointwisely
\begin{align*}
 &\sum_{i\in Q_0}\left\langle\sum_{a\in h^{-1}(i)} \phi_a\circ f_{t(a)}^{-1} \circ(\phi_a)^{*H}\circ f_{i}-\sum_{a\in t^{-1}(i)}f_{i}^{-1} \circ(\phi_a)^{*H}\circ f_{h(a)}\circ\phi_a,\log f_i\right\rangle_{H_i}\\
=&\sum_{a\in Q_1}\sum_{A_{h(a)}=1}^{r_{h(a)}}\sum_{B_{t(a)}=1}^{r_{t(a)}}\exp(\theta_{A_{h(a)}}-\theta_{B_{t(a)}})(\theta_{A_{h(a)}}-\theta_{B_{t(a)}})|(\phi_a)_{A_{h(a)}}^{B_{t(a)}}|^2\\
\geq&\sum_{a\in Q_1}\sum_{A_{h(a)}=1}^{r_{h(a)}}\sum_{B_{t(a)}=1}^{r_{t(a)}}(\theta_{A_{h(a)}}-\theta_{B_{t(a)}})|(\phi_a)_{A_{h(a)}}^{B_{t(a)}}|^2\\
=&\sum_{i\in Q_0}\left\langle\sum_{a\in h^{-1}(i)} \phi_a\circ(\phi_a)^{*H}-\sum_{a\in t^{-1}(i)}(\phi_a)^{*H}\circ\phi_a,\log f_i\right\rangle_{H_i}.
\end{align*}

(2) It is similar to Lemma 5.14 in \cite{Hu}.\end{proof}

\begin{corollary}\label{cor3.7}
Assume $f\in S^+(H,\mathcal{E})$ is a solution of $\{ L_{(\alpha,\sigma,\tau)i}^\varepsilon(f)=0\}_{i\in Q_0}$. Then 
\begin{itemize}
\item[(1)] $m_i\leq \frac{1}{\varepsilon}m_K$, where $m_K:=|Q_0|\underset{i\in Q_0}{\sum}\underset{X}{\max}|K^\phi_{(\alpha,\sigma,\tau)}(H_i)|_{H_i}$.
\item[(2)]There exist positive constants $ C, C^\prime$ independent of $m$ such that for any vertex $i\in Q_0$ we have the inequality
$$m_i\leq C\left(\underset{i\in Q_0}{\max}\{||\log f_i||_{L^2}\}+C^\prime\right).$$
\end{itemize}
\end{corollary}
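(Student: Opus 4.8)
The plan is to extract, from the system $\{L^\varepsilon_{(\alpha,\sigma,\tau)i}(f)=0\}_{i\in Q_0}$, one scalar pointwise differential inequality for the functions $u_i:=|\log f_i|^2_{H_i}$, and then to feed it into two different analytic engines: a maximum principle for (1) and a Moser-type sub-mean-value estimate for (2). It is convenient to abbreviate $K^\phi_{(\alpha,\sigma,\tau)}(H_i):=K_{(\alpha,\sigma,\tau)}(H_i)+\sum_{a\in h^{-1}(i)}\phi_a\circ\phi_a^{*H}-\sum_{a\in t^{-1}(i)}\phi_a^{*H}\circ\phi_a$, the value at the fixed background metric $H$ of the Hermitian--Einstein operator; since $H$ is fixed, each $K^\phi_{(\alpha,\sigma,\tau)}(H_i)$ is a fixed smooth section and $\max_X|K^\phi_{(\alpha,\sigma,\tau)}(H_i)|_{H_i}$ is a constant independent of $\varepsilon$, $f$ and $m$.

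First I would pair $L^\varepsilon_{(\alpha,\sigma,\tau)i}(f)=0$ with $\log f_i$ in the $H_i$-inner product and sum over $i\in Q_0$. The two middle groups of terms are then bounded below using Proposition \ref{prop3.6}: part (2) converts the $\Lambda_\pm\bar\partial_\pm(f_i^{-1}\partial^\pm_{H_i}f_i)$ contribution into $\frac12 P_i(u_i)$, while part (1) replaces the metric-dependent adjoints $\phi_a^{*\tilde H}$ by $\phi_a^{*H}$; recombining $K_{(\alpha,\sigma,\tau)}(H_i)$ with the resulting $\phi_a^{*H}$-terms into $K^\phi_{(\alpha,\sigma,\tau)}(H_i)$ produces
\[
\frac12\sum_{i\in Q_0}P_i(u_i)+\varepsilon\sum_{i\in Q_0}u_i\le-\sum_{i\in Q_0}\langle K^\phi_{(\alpha,\sigma,\tau)}(H_i),\log f_i\rangle_{H_i}\le\sum_{i\in Q_0}|K^\phi_{(\alpha,\sigma,\tau)}(H_i)|_{H_i}\sqrt{u_i}.
\]
This reduction is the step I expect to be the main obstacle, and its subtlety is conceptual rather than computational: one cannot run a maximum principle vertex by vertex, because the off-diagonal adjoints $\phi_a^{*\tilde H}=f_{t(a)}^{-1}\circ\phi_a^{*H}\circ f_{h(a)}$ depend exponentially on $\log f$. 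It is precisely the sum over all vertices together with the telescoping inequality of Proposition \ref{prop3.6}(1) (which rests on $e^x x\ge x$) that cancels this exponential coupling and replaces it by the $f$-independent curvature term $K^\phi_{(\alpha,\sigma,\tau)}(H_i)$.

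For (1) I would evaluate the displayed inequality at a point $x_0$ where $U:=\sum_{i\in Q_0}u_i$ attains its maximum. Each $P_i$ is a second-order elliptic operator with principal part the common Gauduchon Laplacian multiple $\frac{\sigma_i}{2}\Delta_g$ and with vanishing zeroth-order term, so the maximum principle for $\bigoplus_{i\in Q_0}P_i$ already used in the proof of Proposition \ref{prop3.4} gives $\sum_{i\in Q_0}P_i(u_i)(x_0)\ge 0$. Dropping this term and using $\sqrt{u_i}\le\sqrt{U}$ and $|K^\phi_{(\alpha,\sigma,\tau)}(H_i)|_{H_i}\le\max_X|K^\phi_{(\alpha,\sigma,\tau)}(H_i)|_{H_i}$ at $x_0$ leaves $\varepsilon\,U(x_0)\le\big(\sum_{i\in Q_0}\max_X|K^\phi_{(\alpha,\sigma,\tau)}(H_i)|_{H_i}\big)\sqrt{U(x_0)}$, so $\sqrt{U(x_0)}\le\frac1\varepsilon\sum_{i\in Q_0}\max_X|K^\phi_{(\alpha,\sigma,\tau)}(H_i)|_{H_i}$. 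Since $u_i\le U\le U(x_0)$ pointwise, this gives $m_i=\max_X\sqrt{u_i}\le\frac1\varepsilon m_K$, the factor $|Q_0|$ in $m_K$ being harmless slack.

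For (2) I would instead discard the nonnegative term $\varepsilon\sum_i u_i$ and estimate the right-hand side by $|K^\phi_{(\alpha,\sigma,\tau)}(H_i)|_{H_i}\le b$ together with $\sqrt{u_i}\le\frac12(1+u_i)$, reducing the displayed inequality to $\sum_{i\in Q_0}P_i(u_i)\le b(|Q_0|+U)$ with $b$ independent of $m$. Introducing, exactly as in the proof of Proposition \ref{prop3.5}(2), a single second-order elliptic operator $P$ with $P(U)\le\sum_{i\in Q_0}P_i(u_i)$, I obtain the scalar subsolution inequality $P(U)\le bU+b|Q_0|$, whose lower-order (torsion) coefficients are bounded and hence harmless. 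The standard sub-mean-value inequality for subsolutions on the compact manifold $X$ then yields $\sup_X U\le C(\|U\|_{L^1}+1)$ with $C$ depending only on $b$, $P$ and $(X,g)$; since $\|U\|_{L^1}=\sum_{i\in Q_0}\|\log f_i\|_{L^2}^2\le|Q_0|\max_{j\in Q_0}\|\log f_j\|_{L^2}^2$, taking square roots gives $m_i\le C(\max_{j\in Q_0}\|\log f_j\|_{L^2}+C')$ with $C,C'$ independent of $m$.
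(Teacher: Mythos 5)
Your proposal is correct and follows the paper's own proof essentially verbatim: the same pairing of $L^\varepsilon_{(\alpha,\sigma,\tau)i}(f)=0$ with $\log f_i$ summed over all vertices, the same use of Proposition \ref{prop3.6} (part (2) for the $\frac12 P_i(|\log f_i|^2_{H_i})$ lower bound, part (1) to trade the $\tilde H$-adjoints for $H$-adjoints and assemble $K^\phi_{(\alpha,\sigma,\tau)}(H_i)$), then the maximum principle at the maximum point of $\sum_{i\in Q_0}|\log f_i|^2_{H_i}$ for (1), and the Proposition \ref{prop3.5}(2)-style elliptic sup-over-$L^1$ estimate for (2). The only deviations are cosmetic: your pointwise Cauchy--Schwarz bound $\sum_i|K^\phi_{(\alpha,\sigma,\tau)}(H_i)|_{H_i}\sqrt{u_i}$ is slightly sharper than the paper's, which is why the factor $|Q_0|$ in $m_K$ appears to you as harmless slack.
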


\begin{proof}(1) Since $ L_{(\alpha,\sigma,\tau)i}^\varepsilon(f)=0$, we have
\begin{align*}
 \varepsilon \sum_{i\in Q_0} |\log f_i|^2_{H_i}=&-\sum_{i\in Q_0}\bigg\langle K_{(\alpha,\sigma,\tau)}(H_i)+\sqrt{-1}\Big(\alpha_i\sigma_i\Lambda_+\bar\partial_{+}
    (f_i^{-1}\partial^+_{H_i}f_i)+(1-\alpha_i)\sigma_i\Lambda_-\bar\partial_{-}(f_i^{-1}\partial^-_{H_i}f_i)\Big)\\
 &\ \ \ \ \ \ \ \ \ \ \ \ \ \ +\sum_{a\in h^{-1}(i)} \phi_a\circ (\phi_a)^{*\tilde H}-\sum_{a\in t^{-1}(i)} (\phi_a)^{*\tilde H}\circ\phi_a,\log f_i\bigg\rangle_{H_i}\\
 \leq &-\sum_{i\in Q_0}\Big(\langle K^\phi_{(\alpha,\sigma,\tau)}(H_i),\log f_i\rangle_{H_i}+\frac{1}{2}P_i(|\log f_i|^2_{H_i})\Big)\\
 \leq&\sum_{i\in Q_0}\Big(|K^\phi_{(\alpha,\sigma,\tau)}(H_i)|_{H_i}\Big)\left(\sum_{j\in Q_0}|\log f_j|_{H_j}\right)-\frac{1}{2}\sum_{i\in Q_0}P_i(|\log f_i|^2_{H_i}),
\end{align*}
where $K^\phi_{(\alpha,\sigma,\tau)}(H_i)=K_{(\alpha,\sigma,\tau)}(H_i)+\sum\limits_{a\in h^{-1}(i)} \phi_a\circ (\phi_a)^{* H}-\sum\limits_{a\in t^{-1}(i)} (\phi_a)^{* H}\circ\phi_a$. Therefore we arrive at
$$\varepsilon \max_{i\in Q_0}\{m_i^2\}\leq |Q_0| \left(\sum_{i\in Q_0}\max\limits_X|K^\phi_{(\alpha,\sigma,\tau)}(H_i)|_{H_i}\right)\max_{i\in Q_0}\{m_i\}, $$
which implies
$$m_i\leq \max_{i\in Q_0}\{m_i\}\leq \frac{1}{\varepsilon}m_K$$
for $m_K:=|Q_0|\underset{i\in Q_0}{\sum}\underset{X}{\max}|K^\phi_{(\alpha,\sigma,\tau)}(H_i)|_{H_i}$.

(2) We have seen that
$$ \sum_{i\in Q_0}P_i(|\log f_i|^2_{H_i})\leq 2\sum_{i\in Q_0}|K^\phi_{(\alpha,\sigma,\tau)}(H_i)|_{H_i}|\log f_i|_{H_j}\leq \sum_{i\in Q_0}\Big(|K^\phi_{(\alpha,\sigma,\tau)}(H_i)|^2_{H_i}+|\log f_i|^2_{H_i}\Big).$$
As in Proposition \ref{prop3.5} (2), we get the desired inequalities.
\end{proof}

\begin{proposition}\label{prop3.8}Let $\mathcal{E}=(E,\phi)$ be a simple $I_\pm$-holomorphic $Q$-bundle. Assume there is a smooth family solution $f^{(\varepsilon)}\in S^+(\mathcal{E},H)$ to $\{L^\varepsilon_{(\alpha,\sigma,\tau)i}(f)=0\}_{i\in Q_0}$ and that there is a uniform $m^\prime$ so that $m^{(\varepsilon)}<m^\prime$ for all $\varepsilon\in (\varepsilon_0,1]$. Then there exits a constant $C(m^\prime)$ independent of $\varepsilon$ such that  $||f_i^{(\varepsilon)}||_{L_2^p}\leq C(m^\prime)$ for each vertex $i\in Q_0$.
\end{proposition}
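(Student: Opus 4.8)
The plan is to run a standard elliptic bootstrap, using the hypothesis $m^{(\varepsilon)}<m^\prime$ as a uniform $C^0$ control of $\log f_i^{(\varepsilon)}$. First I would record that this hypothesis forces every eigenvalue of $f_i^{(\varepsilon)}$ into $[e^{-m^\prime},e^{m^\prime}]$, so that $f_i^{(\varepsilon)}$ and $(f_i^{(\varepsilon)})^{-1}$ are both bounded in $C^0$ by a constant depending only on $m^\prime$. Multiplying the equation $L^\varepsilon_{(\alpha,\sigma,\tau)i}(f)=0$ on the left by $f_i$ and expanding $\bar\partial_\pm(f_i^{-1}\partial^\pm_{H_i}f_i)=-f_i^{-1}(\bar\partial_\pm f_i)f_i^{-1}\wedge\partial^\pm_{H_i}f_i+f_i^{-1}\bar\partial_\pm\partial^\pm_{H_i}f_i$, I rewrite it as $D_if_i=N_i$, where
\[
D_i=\sqrt{-1}\big(\alpha_i\sigma_i\Lambda_+\bar\partial_+\partial^+_{H_i}+(1-\alpha_i)\sigma_i\Lambda_-\bar\partial_-\partial^-_{H_i}\big),
\]
and $N_i$ collects the term $\sqrt{-1}(\alpha_i\sigma_i\Lambda_+(\bar\partial_+f_i)f_i^{-1}\wedge\partial^+_{H_i}f_i+(1-\alpha_i)\sigma_i\Lambda_-(\bar\partial_-f_i)f_i^{-1}\wedge\partial^-_{H_i}f_i)$, which is quadratic in the first derivatives of $f_i$ with coefficients controlled by $\|f_i^{-1}\|_{C^0}$, together with the zeroth-order pieces $-f_i\circ K_{(\alpha,\sigma,\tau)}(H_i)$, the (still $C^0$-bounded) $\phi$-terms $f_i\circ(\sum_{a\in h^{-1}(i)}\phi_a\circ(\phi_a)^{*\tilde H}-\sum_{a\in t^{-1}(i)}(\phi_a)^{*\tilde H}\circ\phi_a)$, and $-\varepsilon f_i\log f_i$, all of which are uniformly bounded in $C^0$ by $C(m^\prime)$. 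Since $\Lambda_\pm$ and $\omega_\pm$ are built from the common metric $g$ and $\alpha_i\sigma_i,(1-\alpha_i)\sigma_i>0$, the operator $D_i$ is uniformly elliptic with scalar principal symbol; indeed its functional version is the operator $P_i$ with $P_i+P_i^{*H_i}=\sigma_i\Delta_g$.

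The crucial and most delicate step is the uniform $L^2_1$ estimate, and this is exactly where the Gauduchon condition enters. I would take the trace of $D_if_i=N_i$: since the trace commutes with $\bar\partial_\pm$ and with the Chern connections $\partial^\pm_{H_i}$, the left-hand side becomes $P_i(\Tr f_i)$ for the function $\Tr f_i$, while the trace of the quadratic part of $N_i$ equals the nonnegative quantity
\[
\alpha_i\sigma_i\big|f_i^{-\frac{1}{2}}\partial^+_{H_i}f_i\big|^2_{H_i,g}+(1-\alpha_i)\sigma_i\big|f_i^{-\frac{1}{2}}\partial^-_{H_i}f_i\big|^2_{H_i,g}.
\]
Integrating over $X$ and invoking $dd^c_\pm\omega_\pm^{n-1}=0$ gives $\int_X P_i(\Tr f_i)\,d\Vol_g=0$, so the integral of the above nonnegative quantity is bounded by $\int_X|\Tr N_i^{(0)}|\,d\Vol_g\le C(m^\prime)$, where $N_i^{(0)}$ denotes the zeroth-order part of $N_i$. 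Combining this with the $C^0$ bound on $f_i^{-\frac{1}{2}}$ yields $\|\partial^\pm_{H_i}f_i\|_{L^2}\le C(m^\prime)$, hence a uniform $L^2_1$ bound on each $f_i^{(\varepsilon)}$.

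Finally, with $f_i^{(\varepsilon)}$ controlled in $C^0\cap L^2_1$, I would upgrade to $L^p_2$ by bootstrapping on $D_if_i=N_i$. The structural point is that $N_i$ satisfies the natural quadratic growth $|N_i|\le C(m^\prime)(1+|\partial^+_{H_i}f_i|^2+|\partial^-_{H_i}f_i|^2)$, so this is a second-order elliptic system with quadratic gradient nonlinearity and uniformly bounded solution. Feeding the $L^2_1$ bound into the interior elliptic estimate $\|f_i\|_{L^p_2}\le C(\|N_i\|_{L^p}+\|f_i\|_{L^p})$, estimating $\|N_i\|_{L^p}\le C(1+\|\,|\partial f_i|^2\|_{L^p})=C(1+\|\partial f_i\|_{L^{2p}}^2)$, and improving the integrability of the first derivatives step by step through Caccioppoli's inequality and the Sobolev embedding (equivalently, by the regularity theory for elliptic equations with natural growth, in the spirit of \cite{LT}), I obtain $\|f_i^{(\varepsilon)}\|_{L^p_2}\le C(m^\prime)$ for every vertex $i\in Q_0$, uniformly in $\varepsilon\in(\varepsilon_0,1]$.

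The main obstacle is precisely the quadratic first-derivative term in $N_i$: after Sobolev embedding it is of the same order as the quantity being estimated, which makes a naive elliptic estimate circular. This is resolved in two stages. The Gauduchon hypothesis makes the second-order operator integrate to zero against $d\Vol_g$, so the nonnegative quadratic term is pinned down by the $C^0$-bounded zeroth-order data and one gains the first integral ($L^2_1$) bound; and the uniform $C^0$ control of $f_i$ and $f_i^{-1}$ then places the equation within the natural-growth regularity theory, which breaks the circularity and drives the bootstrap up to $L^p_2$.
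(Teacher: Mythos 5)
Your first two stages are sound, and in fact they mirror techniques the paper itself uses elsewhere: the uniform $C^0$ control of $f_i^{(\varepsilon)}$ and $(f_i^{(\varepsilon)})^{-1}$ coming from $m^{(\varepsilon)}<m^\prime$, and the uniform $L^2_1$ bound obtained by tracing the equation and integrating against $d\Vol_g$, with the Gauduchon condition forcing $\int_X P_i(\Tr f_i)\,d\Vol_g=0$, are exactly the mechanism of Proposition \ref{prop3.10} (at $\varsigma=1$) and of the estimate in Proposition \ref{prop3.12}. The gap is the final step. Your unknowns are matrix-valued, so $D_if_i=N_i$ with $|N_i|\leq C(m^\prime)(1+|\partial^+_{H_i}f_i|^2+|\partial^-_{H_i}f_i|^2)$ is an elliptic \emph{system} with quadratic gradient growth, and for such systems there is no general theorem that converts uniform $C^0\cap L^2_1$ bounds into uniform $L^p_2$ bounds. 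Harmonic maps are systems of precisely this natural-growth type, and bubbling sequences (e.g.\ degree-one harmonic maps $S^2\to S^2$ with concentrating energy) have fixed sup-norm and fixed Dirichlet energy while their first derivatives blow up in $L^\infty$ and their second derivatives blow up in every $L^p$; Frehse-type examples even give bounded $W^{1,2}$ weak solutions of quadratic-growth systems that are discontinuous. So the appeal to ``regularity theory for elliptic equations with natural growth'' does not break the circularity you yourself identified: breaking it requires either a smallness hypothesis (small energy or small oscillation, which you do not have uniformly in $\varepsilon$) or a genuinely structural argument, e.g.\ a Bochner/Eells--Sampson gradient estimate exploiting the fact that positive Hermitian endomorphisms form a nonpositively curved target. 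Moreover \cite{LT} contains no such natural-growth theorem to lean on; its Proposition 3.3.5 does something else entirely.

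What \cite{LT} and this paper actually do also explains two hypotheses your argument never touches: simplicity of $\mathcal{E}$ and smoothness of the family $\varepsilon\mapsto f^{(\varepsilon)}$. The paper differentiates the equation in $\varepsilon$: the derivative $\mu=df^{(\varepsilon)}/d\varepsilon$ satisfies a \emph{linear} elliptic equation whose right-hand side is estimated in $L^p$ by $C(m^\prime)\bigl(1+||\mu_i||_{L_1^{2p}}||f_i||_{L_1^{2p}}+||f_i||^2_{L_1^{2p}}\bigr)$; simplicity enters through Proposition \ref{prop3.5} (which supplies $\max_X|\mu_i|_{H_i}\leq D(m)$ via the spectral gap of $\Delta$ on the orthogonal complement of $\mathbb{C}\,\mathrm{Id}_{\mathcal{E}}$), and Aubin's interpolation inequality then yields $||\mu_i||_{L_2^p}\leq C(m^\prime)(1+\max_j||f_j||_{L_2^p})$. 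Integrating the resulting differential inequality $\frac{d}{d\varepsilon}||f_{i_0}||_{L_2^p}\geq -C(m^\prime)(1+||f_{i_0}||_{L_2^p})$ over $[\varepsilon,1]$, starting from the fixed solution $f^{(1)}$ of Proposition \ref{prop3.2}, gives the uniform bound. This ODE-in-$\varepsilon$ device is precisely how the quadratic-gradient difficulty is sidestepped; if you insist on a direct bootstrap, you must supply the structural gradient estimate yourself, and as written the last step of your argument is not justified.
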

\begin{proof}By K\"{a}hler identities on Gauduchon manifold (Lemma 7.2.5 in \cite{LT}), we have
\begin{align*}
 \Delta_i(\mu)=&P^{H_i}(\mu_i)+(P^{H_i})^{* H_i}(\mu_i)-[K_{(\alpha,\sigma,\tau)}(H_i),\mu_i]\\&+\sum_{a\in t^{-1}(i)}\phi_a^{*H}\circ [\phi, \mu]_a-\sum_{a\in h^{-1}(i)}[\phi, \mu]_a\circ\phi_a^{*H}.
\end{align*}
  Since $\Delta+\mathrm{Id}$ is self-adjoint and has strictly positive spectrum and by Proposition \ref{prop3.5} (2),  there is a positive constant $C$ such that
 \begin{align*}
   \sum_{i\in Q_0}||\mu_i||_{L_2^p}\leq &C\sum_{i\in Q_0}||\mu_i+\Delta_i(\mu_i)||_{L^p}\\
   \leq & C\sum_{i\in Q_0}\Big(||\mu_i||_{L^p}+||P^{H_i}(\mu_i)||_{L^p}+||(P^{H_i})^{*H_i}(\mu_i)||_{L^p}\Big)\\
   \leq & C_0(m^\prime)\left(1+\sum_{i\in Q_0}\Big(||\mu_i||_{L_1^{2p}}+||P^{H_i}(\mu_i)||_{L^p}\Big)\right).
 \end{align*}
On the other hand, it follows from $\mathbb{L}_{(\alpha,\sigma,\tau)i}^{\varepsilon,f}(\mu)+f_i\circ\log f_i=0$ and $L^\varepsilon_{(\alpha,\sigma,\tau)i}(f)=0$ that the variation $\mu_i$ satisfies the equation
\begin{align*}
P^{H_i}(\mu_i)=&\sqrt{-1}\mu_i\circ\Big(\alpha_i\sigma_i\Lambda_+\bar \partial_+(f_i^{-1}\circ \partial^+_{H_i}f_i)+(1-\alpha_i)\sigma_i\Lambda_-\bar \partial_-(f_i^{-1}\circ \partial^-_{H_i}f_i)\Big)\\
&+\sqrt{-1}\Big(\alpha_i\sigma_i\Lambda_+(\bar \partial_+\mu_i\circ f^{-1}_i\circ \partial^+_{H_i}f_i-\bar \partial_+ f_i\circ f^{-1}_i\circ\mu_i\circ \partial^+_{H_i}f_i+\bar \partial_+f_i\circ f^{-1}_i\circ \partial^+_{H_i}\mu_i)\\
&+\alpha_i(1-\sigma_i)\Lambda_-(\bar \partial_-\mu_i\circ f^{-1}_i\circ \partial^-_{H_i}f_i-\bar \partial_- f_i\circ f^{-1}_i\circ\mu_i\circ \partial^-_{H_i}f_i+\bar \partial_-f_i\circ f^{-1}_i\circ \partial^-_{H_i}\mu_i)\Big)\\
&-\sum_{a\in h^{-1}(i)} \phi_a\circ f^{-1}_{t(a)}\circ[\phi^{* H},\mu\circ f^{-1}]_a\circ f_{h(a)}+\sum_{a\in t^{-1}(i)}  f^{-1}_{t(a)}\circ[\phi^{* H},\mu\circ f^{-1}]_a\circ f_{h(a)}\circ\phi_a\\
&-f_i\circ\left(\varepsilon \frac{d}{dt}|_{t=0}\log(f_i+t\mu_i)+\log f_i\right).
\end{align*}
We need to estimate the $L^p$-norms of the terms on the right hand side (cf.  Proposition 3.3.5 in \cite{LT}). Indeed, we have \begin{itemize}                                                                                                                \item the first term is exactly $-\sqrt{-1}\mu_i\circ(K^\phi_{(\alpha,\sigma,\tau)}(H_i)+\varepsilon\log f_i)$, hence the norm is bounded by a constant $C_1(m^\prime)$ thanks to Proposition \ref{prop3.5} (2);                                                                                                               
\item   the norm of  second term is bounded by $C_2(m^\prime)(||\mu_i||_{L_1^{2p}}||f_i||_{L_1^{2p}}+||f_i||^2_{L_1^{2p}})$ due to H\"{o}lder's inequality;                                                                                                                    \item the norms of the third and the forth terms are obviously bounded by  some constants $C_3(m^\prime), C_4(m^\prime)$;                                                                                                                
\item the norm of the last term  is also bounded by a constant $C_5(m^\prime)$ since 
$$\left|\frac{d}{dt}\right|_{t=0}\log(f_i+t\mu_i)|_{H_i}\leq |\mu_i\circ f_i^{-1}|_{H_i}.$$                                                                                                   \end{itemize} 
                                                                                                              Consequently, we obtain
\begin{align*}
  ||P^{H_i}(\mu_i)||_{L^p}\leq C_6(m^\prime)(1+||\mu_i||_{L_1^{2p}}||f_i||_{L_1^{2p}}+||f_i||^2_{L_1^{2p}}),
\end{align*}
hence,
\begin{align*}
  \sum_{i\in Q_0}||\mu_i||_{L_2^p}\leq C_7(m^\prime)\left(1+ \sum_{i\in Q_0}||\mu_i||_{L_1^{2p}}\Big(1+\sum_{j\in Q_0}||f_j||_{L_1^{2p}}\Big)+\sum_{i\in Q_0}||f_i||_{L_1^{2p}}\sum_{j\in Q_0}||f_j||_{L_1^{2p}}\right).
\end{align*}
When $\sum\limits_{i\in Q_0}||\mu_i||_{L_2^p}\geq |Q_0|$, we have
$ \sum\limits_{i\in Q_0}||\mu_i||_{L_1^{2p}}\leq C_8(m^\prime)\sum\limits_{i\in Q_0}||\mu_i||^{\frac{1}{2}}_{L_2^{p}}$ by an interpolation inequality of Aubin. We may assume  $\sum\limits_{i\in Q_0}||f_i||_{L_2^p}\geq |Q_0|$, otherwise the conclusion has already holds truly, then similarly $\sum\limits_{i\in Q_0}||f_i||_{L_1^{2p}}\leq C_{9}(m)\sum\limits_{i\in Q_0}||f_i||^{\frac{1}{2}}_{L_2^{p}}$. Therefore we arrive at
\begin{align*}
\sum_{i\in Q_0}||\mu_i||_{L_2^p}\leq &C_{7}(m^\prime)\left(1+C_8(m^\prime)\sum\limits_{i\in Q_0}||\mu_i||^{\frac{1}{2}}_{L_2^{p}}\Big(1+C_9(m^\prime)\sum\limits_{j\in Q_0}||f_j||^{\frac{1}{2}}_{L_2^{p}}\Big)+\Big(C_9(m^\prime)\sum\limits_{i\in Q_0}||f_i||^{\frac{1}{2}}_{L_2^{p}}\Big)^2\right)\\
\leq& C_{10}(m^\prime)\left(1+\sum\limits_{i\in Q_0}||\mu_i||^{\frac{1}{2}}_{L_2^{p}}\sum\limits_{j\in Q_0}||f_j||^{\frac{1}{2}}_{L_2^{p}}+\Big(\sum\limits_{i\in Q_0}||f_i||^{\frac{1}{2}}_{L_2^{p}}\Big)^2\right).
\end{align*}
Let $\underline{\mu}=\max_{i\in Q_0}\{||\mu_i||_{L_2^p}\}$, $\underline{f}=\max_{i\in Q_0}\{||f_i||_{L_2^p}\}$, then
$$\underline{\mu}\leq C_{11}(m^\prime)\left(1+\underline{\mu}^{\frac{1}{2}}\underline{f}^{\frac{1}{2}}+\underline{f}\right),$$
which implies for each vertex $i\in Q_0$, we have
$$||\mu_i||_{L_2^p}\leq \underline{\mu}\leq C_{12}(m^\prime)(1+\underline{f}).$$
Clearly, the above inequality is also satisfied when $\sum\limits_{i\in Q_0}||\mu_i||_{L_2^p}\leq |Q_0|$.
Now take $i_0$ be the vertex such that $||f_{i_0}||_{L_2^p}=\underline{f}$, then
$$\frac{d}{d\varepsilon}||f_{i_0}||_{L_2^p}\geq -||\mu_{i_0}||_{L_2^p}\geq- C_{12}(m^\prime)(1+||f_{i_0}||_{L_2^p}).$$
Integration over $[\varepsilon,1]$ leads to the final inequality $||f_i||_{L_2^p}\leq \underline{f}\leq C(m^\prime)$.
\end{proof}

\begin{lemma}\label{lem3.9}
 $\mathcal{E}=(E,\phi)$ and $f^{(\varepsilon)}$ are as above. Then
\begin{enumerate}
  \item[(1)] $J=(0,1]$.
  \item[(2)] If  there is a constant $C$ such  that  $ \max\limits_{i\in Q_0}\{||f_i^{(\varepsilon)}||_{L^2}\}_i\leq C$ for all $\varepsilon\in (0,1]$ then there exists a solution $f^{(0)}$ of the equations $\{L_{(\alpha,\sigma,\tau)i}(f)=0\}_{i\in Q_0}$.
\end{enumerate}
\end{lemma}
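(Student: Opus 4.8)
The plan is to run the classical continuity method (in the style of Simpson and L\"ubke--Teleman) along the connected interval $(0,1]$, feeding in the \emph{a priori} estimates of Corollary \ref{cor3.7} and Proposition \ref{prop3.8}; throughout one uses that $\mathcal{E}$ is simple, so that both of those results apply.

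First I would prove (1) by showing that $J$ is open, closed and non-empty in $(0,1]$. Non-emptiness and openness are already in hand: $1\in J$ by Proposition \ref{prop3.2}(1), and $J$ is open by Proposition \ref{prop3.4}. Since the linearization $\mathbb{L}_{(\alpha,\sigma,\tau)}^{\varepsilon,f}$ is elliptic of index zero and injective, it is invertible, so solutions are locally unique and depend smoothly on $\varepsilon$; hence there is a maximal $\varepsilon_0\in[0,1)$ with $(\varepsilon_0,1]\subseteq J$ carrying a smooth family $f^{(\varepsilon)}$. Suppose $\varepsilon_0>0$. On $(\varepsilon_0,1]$ Corollary \ref{cor3.7}(1) gives the uniform bound $m^{(\varepsilon)}\leq m_K/\varepsilon< m_K/\varepsilon_0=:m'$, whence Proposition \ref{prop3.8} yields $\|f_i^{(\varepsilon)}\|_{L_2^p}\leq C(m')$ uniformly in $\varepsilon$ and $i$. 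Choosing $\varepsilon_k\downarrow\varepsilon_0$ and using the compact Sobolev embeddings $L_2^p\hookrightarrow L_1^p\hookrightarrow C^0$ for $p$ large, I would extract a subsequence with $f^{(\varepsilon_k)}\rightharpoonup f^{(\varepsilon_0)}$ weakly in $L_2^p$ and strongly in $C^0$. The two-sided control $m^{(\varepsilon_k)}\leq m'$ keeps all eigenvalues bounded away from $0$ and $\infty$, so the limit lies in $S^+(\mathcal{E},H)$; passing to the limit in $L^{\varepsilon_k}_{(\alpha,\sigma,\tau)i}(f^{(\varepsilon_k)})=0$ gives $L^{\varepsilon_0}_{(\alpha,\sigma,\tau)i}(f^{(\varepsilon_0)})=0$, and elliptic regularity makes $f^{(\varepsilon_0)}$ smooth. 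Thus $\varepsilon_0\in J$, and openness of $J$ forces $(\varepsilon_0-\delta,1]\subseteq J$ for some $\delta>0$, contradicting maximality. Hence $\varepsilon_0=0$ and $J=(0,1]$.

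For (2) the aim is to reach $\varepsilon=0$, where the bound of Corollary \ref{cor3.7}(1) degenerates. Here the extra hypothesis takes over: the uniform $L^2$ bound on $f^{(\varepsilon)}$, together with the normalization $\prod_{i\in Q_0}\det f_i^{(\varepsilon)}=1$ (which controls the small eigenvalues in terms of the large ones), bounds $\max_{i}\|\log f_i^{(\varepsilon)}\|_{L^2}$ uniformly, and Corollary \ref{cor3.7}(2) then promotes this to a \emph{uniform} sup-bound $m^{(\varepsilon)}\leq m'$ valid for all $\varepsilon\in(0,1]$. Plugging $m'$ into Proposition \ref{prop3.8} gives $\|f_i^{(\varepsilon)}\|_{L_2^p}\leq C(m')$ on all of $(0,1]$. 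Taking $\varepsilon_k\to 0$ and repeating the compactness argument of (1), I would obtain a limit $f^{(0)}\in S^+(\mathcal{E},H)$; since $\|\varepsilon_k\log f_i^{(\varepsilon_k)}\|_{C^0}\leq \varepsilon_k m'\to 0$, the perturbation disappears in the limit and $f^{(0)}$ solves $L_{(\alpha,\sigma,\tau)i}(f^{(0)})=0$ for every $i\in Q_0$, with elliptic regularity again providing smoothness.

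The main obstacle is the limit $\varepsilon\to0$ in (2): without an extra input one cannot stop the metrics $f^{(\varepsilon)}$ from degenerating, and this is precisely the analytic difficulty that in the full correspondence is played off against stability. The decisive technical point is therefore Corollary \ref{cor3.7}(2), which converts the integral hypothesis into the uniform sup-bound $m'$ that Proposition \ref{prop3.8} requires; once $m'$ is secured, the compactness and passage-to-the-limit steps are routine. In (1), by contrast, the only substantive step is closedness, which rests on the same pair of estimates, with $m'=m_K/\varepsilon_0$ remaining finite precisely because $\varepsilon_0>0$.
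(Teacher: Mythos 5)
Your proposal is correct and follows essentially the same route as the paper's proof: non-emptiness and openness come from Propositions \ref{prop3.2}(1) and \ref{prop3.4}, closedness from the a priori bounds of Corollary \ref{cor3.7}(1) and Proposition \ref{prop3.8} followed by weak $L_2^p$ compactness and passage to the limit (the paper carries out this last step by testing against smooth sections and using continuity of $\log:L_1^p\rightarrow L^2$ and $\exp:L^2\rightarrow L^2$, which is the rigorous form of your ``pass to the limit'' step), and part (2) runs the same chain with Corollary \ref{cor3.7}(2) in place of \ref{cor3.7}(1). Your explicit reduction of the $L^2$ hypothesis on $f^{(\varepsilon)}$ to a uniform $L^2$ bound on $\log f^{(\varepsilon)}$ via the determinant normalization is precisely the detail the paper compresses into ``similar arguments as above.''
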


\begin{proof} (1) Assume $J=(\varepsilon_0,1]$ for $\varepsilon_0>0$. If one shows the solution $f^{(\varepsilon)}$ actually extends to $[\varepsilon_0,1]$, contradicting with the openness of $J$, the claim follows. By Corollary \ref{cor3.7} (1), $m_i^{(\varepsilon)}\leq \frac{1}{\varepsilon}m_K< \frac{1}{\varepsilon_0} m_K:=m^\prime$, then $||f^{(\varepsilon)}_i||_{L_2^p}\leq C(m^\prime)$ as a result of Proposition \ref{prop3.8}. This uniform estimate guarantees the existence of the solution  $f^{(\varepsilon_0)}$. Choose $p>2n$. The uniform $L_2^p$ norm bound implies that there is a sequence $\varepsilon_\mathfrak{k}\rightarrow\varepsilon_0$, $f_i^{(\varepsilon_\mathfrak{k})}\rightarrow f_i^{(\varepsilon_0)}$ for each vertex $i\in Q_0$ converges weakly in $L_2^p$-norm and strongly in $L_1^p$-norm. We need to show $L^{\varepsilon_0}_{(\alpha,\sigma,\tau)i}(f^{(\varepsilon_0)})=0$. For any smooth section $\psi=(\psi_i)\in C^\infty(\End(\mathcal{E}))$, we compute
\begin{align*}
 &\langle\langle L^{\varepsilon_0}_{(\alpha,\sigma,\tau)i}(f^{(\varepsilon_0)}),\psi_i\rangle\rangle_{L^2}=\langle\langle L^{\varepsilon_0}_{(\alpha,\sigma,\tau)i}(f^{(\varepsilon_0)})-L^{\varepsilon_k}_{(\alpha,\sigma,\tau)i}(f^{(\varepsilon_k)}),\alpha\rangle\rangle_{L^2}\\
 =&\langle\langle\alpha_i\sigma_i\Lambda_+\bar\partial_{+}
    ((f_i^{(\varepsilon_0)})^{-1}\partial^+_{H_i}f^{(\varepsilon_0)}_i)+(1-\alpha_i)\sigma_i\Lambda_-\bar\partial_{-}(f_i^{(\varepsilon_0)})^{-1}\partial^-_{H_i}f^{(\varepsilon_0)}_i)\\
    &\ \ \ -\alpha_i\sigma_i\Lambda_+\bar\partial_{+}
    ((f_i^{(\varepsilon_\mathfrak{k})})^{-1}\partial^+_{H_i}f^{(\varepsilon_\mathfrak{k})}_i)-(1-\alpha_i)\sigma_i\Lambda_-\bar\partial_{-}(f_i^{(\varepsilon_\mathfrak{k})})^{-1}\partial^-_{H_i}f^{(\varepsilon_\mathfrak{k})}_i),\psi_i\rangle\rangle_{L^2}\\
    &+\langle\langle\varepsilon_0\log f_i^{(\varepsilon_0)}-\varepsilon_\mathfrak{k}\log f_i^{(\varepsilon_\mathfrak{k})},\psi_i\rangle\rangle_{L^2}\\
    &+\left\langle\left\langle\sum_{a\in h^{-1}(i)} \phi_a\circ( f^{(\varepsilon_0)}_{t(a)})^{-1}\circ (\phi_a)^{* H}\circ f^{(\varepsilon_0)}_{i}-\sum_{a\in t^{-1}(i)} (f_i^{(\varepsilon_0)})^{-1}(\phi_a)^{* H}\circ f^{(\varepsilon_0)}_{h(a)}\circ\phi_a\right.\right.\\
   &\left.\left.\ \ \ \ \ \ \ \  -\sum_{a\in h^{-1}(i)} \phi_a\circ( f^{(\varepsilon_\mathfrak{k})}_{t(a)})^{-1}\circ (\phi_a)^{* H}\circ f^{(\varepsilon_\mathfrak{k})}_{i}+\sum_{a\in t^{-1}(i)} (f_i^{(\varepsilon_\mathfrak{k})})^{-1}(\phi_a)^{* H}\circ f^{(\varepsilon_\mathfrak{k})}_{h(a)}\circ\phi_a\right\rangle\right\rangle_{L^2}.
\end{align*}
Note that the maps $\log :L_1^p\rightarrow L^2$
and $\exp: L^2\rightarrow L^2$, hence $(\bullet)^{-1}=\exp(-\log \bullet):L_1^p\rightarrow L^2$  are continuous, we then conclude that the terms on the right hand converge to zero when $\varepsilon_\mathfrak{k}\rightarrow\varepsilon_0$.

(2) This claim follows from Corollary \ref{cor3.7} (2), Proposition \ref{prop3.8} and similar arguments as above.
\end{proof}

To complete the Hitchin--Kobayashi correspondence, we should prove when the boundedness of $||f_i^{(\varepsilon)}||_{L^2}$'s is not satisfied, i.e., there is a vertex $i\in Q_0$ such that $\limsup\limits_{\varepsilon\rightarrow 0}||f_i^{(\varepsilon)}||_{L^2}=\infty$, the $I_\pm$-holomorphic $Q$-bundle $\mathcal{E}$ is not $(\alpha,\sigma,\tau)$-stable. Firstly, we observe that under such assumption on the norms, $\sum\limits_{i\in Q_0}\rk(E_i)>1$, otherwise, there is only one vertex $i$ with $f_i^{(\varepsilon)}=1$.

\begin{proposition}\label{prop3.10}
 Let $f\in S^+(\mathcal{E},H)$ be the solution  to $\{L^\varepsilon_{(\alpha,\sigma,\tau)i}(f)=0\}_{i\in Q_0}$. Then for any $0<\varsigma\leq 1$, we have the inequality
\begin{align*}
 \frac{1}{\varsigma}\sum_{i\in Q_0}P_i(\Tr f_i^\varsigma)\leq &-\sum_{i\in Q_0} \langle K^\phi_{(\alpha,\sigma,\tau)}(H_i)+\varepsilon\log f_i, f_i^\varsigma\rangle_{H_i}\\
 &-\sum_{i\in Q_0}\left(\alpha_i\sigma_i|f_i^{-\frac{\varsigma}{2}}\partial^+_{H_i}f^\varsigma|^2_{H_i}+(1-\alpha_i)\sigma_i|f_i^{-\frac{\varsigma}{2}}\partial^-_{H_i}f^\varsigma|^2_{H_i}\right)\\
 &-\sum_{a\in Q_1}\Big|f^{-\frac{\varsigma}{2}}_{t(a)}\circ[\phi^{*H},f^\varsigma]_a\Big|^2_H.
\end{align*}
\end{proposition}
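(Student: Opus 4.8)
The plan is to prove this as a pointwise inequality (it involves no integration) by pairing the equation $\{L^\varepsilon_{(\alpha,\sigma,\tau)i}(f)=0\}_{i\in Q_0}$ against the Hermitian positive endomorphism $f_i^\varsigma$. Since $f$ is smooth and positive-definite, each $f_i^\varsigma\in S^+(E_i,H_i)$ is smooth, so I may use the functional calculus: at a point, in an $H_i$-unitary frame diagonalizing $f_i$ with eigenvalues $\lambda_A=e^{\theta_A}$, the covariant derivatives of $f_i^\varsigma$ obey the Daleckii--Krein formula, e.g. $(\partial^\pm_{H_i}f_i^\varsigma)_{AB}=\frac{\lambda_A^\varsigma-\lambda_B^\varsigma}{\lambda_A-\lambda_B}(\partial^\pm_{H_i}f_i)_{AB}$, the coefficient being $\varsigma\lambda_A^{\varsigma-1}$ when $A=B$.

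First I would pair the vanishing of $L^\varepsilon_{(\alpha,\sigma,\tau)i}(f)$ with $f_i^\varsigma$ in the $H_i$-inner product and sum over $i$, isolating the second-order contribution
\[
\sum_{i\in Q_0}\Big\langle\sqrt{-1}\big(\alpha_i\sigma_i\Lambda_+\bar\partial_+(f_i^{-1}\partial^+_{H_i}f_i)+(1-\alpha_i)\sigma_i\Lambda_-\bar\partial_-(f_i^{-1}\partial^-_{H_i}f_i)\big),f_i^\varsigma\Big\rangle_{H_i}=-\sum_{i\in Q_0}\big\langle K_{(\alpha,\sigma,\tau)}(H_i)+\varepsilon\log f_i+\Phi_i^{\tilde H},f_i^\varsigma\big\rangle_{H_i},
\]
where $\Phi_i^{\tilde H}=\sum_{a\in h^{-1}(i)}\phi_a\circ(\phi_a)^{*\tilde H}-\sum_{a\in t^{-1}(i)}(\phi_a)^{*\tilde H}\circ\phi_a$ collects the arrow terms built from $(\phi_a)^{*\tilde H}=f^{-1}_{t(a)}\circ(\phi_a)^{*H}\circ f_{h(a)}$.

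The core is a pointwise Bochner estimate for the left side. Using $\partial_\pm\Tr f_i^\varsigma=\Tr(\partial^\pm_{H_i}f_i^\varsigma)$ together with the Daleckii--Krein expansion of $\bar\partial_\pm$ applied to the functional calculus, I would establish, for each sign and each vertex,
\[
\big\langle\sqrt{-1}\Lambda_\pm\bar\partial_\pm(f_i^{-1}\partial^\pm_{H_i}f_i),f_i^\varsigma\big\rangle_{H_i}\geq\frac{1}{\varsigma}\,\sqrt{-1}\Lambda_\pm\bar\partial_\pm\partial_\pm(\Tr f_i^\varsigma)+\big|f_i^{-\varsigma/2}\partial^\pm_{H_i}f_i^\varsigma\big|^2_{H_i}.
\]
Expanding both sides in the eigenframe, the difference collapses to a sum $\sum_{A,B}c_{AB}\,|(\partial^\pm_{H_i}f_i)_{AB}|^2$ whose nonnegativity is equivalent, term by term (with $x=\lambda_A,\ y=\lambda_B$), to the scalar inequality
\[
\frac{x^{\varsigma-1}-y^{\varsigma-1}}{x-y}+\frac{y^{\varsigma-1}}{x}\geq x^{-\varsigma}\Big(\frac{x^\varsigma-y^\varsigma}{x-y}\Big)^2\qquad(x,y>0),
\]
which holds exactly for $0<\varsigma\leq1$, with equality at $\varsigma=1$; the diagonal limit $x=y$ reduces precisely to $\varsigma\geq\varsigma^2$. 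Summing these with the weights $\alpha_i\sigma_i$ and $(1-\alpha_i)\sigma_i$ reproduces $\frac1\varsigma\sum_iP_i(\Tr f_i^\varsigma)$ together with the two gradient terms.

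Finally I would convert the $\tilde H$-adjoint arrow terms into the $H$-adjoint ones. Diagonalizing $f_{t(a)}$ and $f_{h(a)}$ in their respective frames shows
\[
\sum_{i\in Q_0}\big\langle\Phi_i^{\tilde H}-\Phi_i^{H},f_i^\varsigma\big\rangle_{H_i}\geq\sum_{a\in Q_1}\big|f^{-\varsigma/2}_{t(a)}\circ[\phi^{*H},f^\varsigma]_a\big|^2_{H},
\]
where $\Phi_i^{H}$ uses $(\phi_a)^{*H}$; this reduces to the term-by-term estimate $\frac{(y-x)(y^\varsigma-x^\varsigma)}{x}\geq x^{-\varsigma}(y^\varsigma-x^\varsigma)^2$, that is $\frac{x^\varsigma-y^\varsigma}{x-y}\leq x^{\varsigma-1}$, again valid precisely for $0<\varsigma\leq1$. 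Combining the paired equation with the two displayed inequalities and using $K_{(\alpha,\sigma,\tau)}(H_i)+\Phi_i^{H}=K^\phi_{(\alpha,\sigma,\tau)}(H_i)$ yields the assertion. The main obstacle is the spectral reduction of the Bochner term to the scalar divided-difference inequalities and verifying that $0<\varsigma\leq1$ is exactly the range in which both hold; once the single-bundle, single-structure estimate is in hand, the two complex structures $I_\pm$ and the quiver commutators are routine bookkeeping.
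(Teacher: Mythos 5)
Your proposal is correct and follows essentially the same route as the paper: pair the perturbed equation $L^\varepsilon_{(\alpha,\sigma,\tau)i}(f)=0$ against $f_i^\varsigma$, bound the second-order term below by $\frac{1}{\varsigma}P_i(\Tr f_i^\varsigma)$ plus the two gradient terms, and convert the $\tilde H$-adjoint arrow terms to $H$-adjoint ones plus the commutator norm via spectral decomposition, both estimates reducing to the divided-difference inequality $\frac{x^\varsigma-y^\varsigma}{x-y}\leq x^{\varsigma-1}$, which indeed holds exactly for $0<\varsigma\leq1$. The only difference is that the paper simply cites the Bochner-type estimate from Lemma 5.21 of \cite{Hu}, whereas you re-derive it via the Daleckii--Krein expansion in an eigenframe; your scalar reduction is correct (after clearing denominators it collapses to $(x-y)(x^\varsigma-y^\varsigma)\geq x^{1-\varsigma}(x^\varsigma-y^\varsigma)^2$, i.e.\ the same divided-difference bound), so this is a self-contained unpacking of the cited lemma rather than a genuinely different argument.
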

\begin{proof}Firstly, we have the following equality
\begin{align*}
 -\sum_{i\in Q_0} \langle K_{(\alpha,\sigma,\tau)}(H_i), f_i^\varsigma\rangle_{H_i}=& \sum_{i\in Q_0}\bigg\langle\sqrt{-1}\Big(\alpha_i\sigma_i\Lambda_+\bar\partial_{+}
    (f_i^{-1}\partial^+_{H_i}f_i)+(1-\alpha_i)\sigma_i\Lambda_-\bar\partial_{-}(f_i^{-1}\partial^-_{H_i}f_i)\Big),f_i^\varsigma\bigg\rangle_{H_i}\\
    &+\varepsilon\sum_{i\in Q_0}\langle\log f_i, f_i^\varsigma\rangle_{H_i}+\sum_{a\in Q_1}\Big\langle f^{-1}_{t(a)}(\phi_a)^{*H}f_{h(a)},[\phi^{*H}, f^\varsigma]\Big\rangle_H.
\end{align*}
According to Lemma 5.21 in \cite{Hu}, we have
\begin{align*}
  &\bigg\langle\sqrt{-1}\Big(\alpha_i\sigma_i\Lambda_+\bar\partial_{+}
    (f_i^{-1}\partial^+_{H_i}f_i)+(1-\alpha_i)\sigma_i\Lambda_-\bar\partial_{-}(f_i^{-1}\partial^-_{H_i}f_i)\Big),f_i^\varsigma\bigg\rangle_{H_i}\\
    \geq&\frac{1}{\varsigma}P_i(\Tr f_i^\varsigma)+\alpha_i\sigma_i|f_i^{-\frac{\varsigma}{2}}\partial^+_{H_i}f_i^\varsigma|^2_{H_i}+(1-\alpha_i)\sigma_i|f_i^{-\frac{\varsigma}{2}}\partial^-_{H_i}f_i^\varsigma|^2_{H_i}.
\end{align*}
As the proof of Proposition \ref{prop3.6}, we calculate the third summand on the right side
\begin{align*}
 &\sum_{a\in Q_1}\left\langle f^{-1}_{t(a)}(\phi_a)^{*H}f_{h(a)},[\phi^{*H}, f^\varsigma]\right\rangle_H\\
 =&\sum_{a\in Q_1}\sum_{A_{h(a)}=1}^{r_{h(a)}}\sum_{B_{t(a)}=1}^{r_{t(a)}}\exp(\theta_{A_{h(a)}}-\theta_{B_{t(a)}})\Big(\exp(\varsigma\theta_{A_{h(a)}})-\exp(\varsigma\theta_{B_{t(a)}})\Big)\left|(\phi_a)_{A_{h(a)}}^{B_{t(a)}}\right|^2\\
 \geq&\sum_{a\in Q_1}\exp\Big(\varsigma(\theta_{A_{h(a)}}-\theta_{B_{t(a)}})\Big)\Big(\exp(\varsigma\theta_{A_{h(a)}})-\exp(\varsigma\theta_{B_{t(a)}})\Big)\left|(\phi_a)_{A_{h(a)}}^{B_{t(a)}}\right|^2\\
 =&\sum_{i\in Q_0}\left\langle \sum_{a\in h^{-1}(i)} \phi_a\circ(\phi_a)^{*H}-\sum_{a\in t^{-1}(i)}(\phi_a)^{*H}\circ\phi_a,f^\varsigma_i\right\rangle_{H_i}+\sum_{a\in Q_1}\Big|f^{-\frac{\varsigma}{2}}_{t(a)}\circ[\phi^{*H},f^\varsigma]_a\Big|^2_H.
\end{align*}
Combining the above two (in)equalities provides the desired result.
\end{proof}

\begin{corollary}\label{cor3.11}
With the same conditions as in Proposition \ref{prop3.10}, there is a constant $C$ independent of $\varepsilon$ such that for each vertex $i\in Q_0$, we have
$$\max_X|f_i|_{H_i}\leq C \max\limits_{i\in Q_0}\{||f_{i}||_{L^1}\},$$
\end{corollary}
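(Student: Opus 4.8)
The plan is to convert Proposition~\ref{prop3.10} into a scalar subsolution inequality for $\sum_{i\in Q_0}\Tr f_i$, apply a Moser-type mean--value estimate, and restore homogeneity at the end using the determinant normalization. First I would set $\varsigma=1$ in Proposition~\ref{prop3.10} and discard the three nonnegative terms subtracted on its right--hand side (the two $\partial^\pm_{H_i}$-gradient terms and the arrow term $\sum_{a}|f^{-1/2}_{t(a)}\circ[\phi^{*H},f]_a|^2_H$), which leaves
\[
\sum_{i\in Q_0}P_i(\Tr f_i)\le-\sum_{i\in Q_0}\big\langle K^\phi_{(\alpha,\sigma,\tau)}(H_i)+\varepsilon\log f_i,\,f_i\big\rangle_{H_i}.
\]
Writing $f_i=\sum_{A}e^{\theta_{A_i}}e_{A}\otimes e^{A}$ in an $H_i$-unitary eigenframe, I would bound the right side uniformly in $\varepsilon$: Cauchy--Schwarz with $|f_i|_{H_i}\le\Tr f_i$ gives $-\langle K^\phi_{(\alpha,\sigma,\tau)}(H_i),f_i\rangle_{H_i}\le B\,\Tr f_i$, where $B:=\max_{i\in Q_0}\max_X|K^\phi_{(\alpha,\sigma,\tau)}(H_i)|_{H_i}$ depends only on the fixed data $(H,\phi)$, while the elementary bound $\theta e^{\theta}\ge-e^{-1}$ yields $-\varepsilon\langle\log f_i,f_i\rangle_{H_i}=-\varepsilon\sum_A\theta_{A_i}e^{\theta_{A_i}}\le\varepsilon\,\rk(E_i)/e\le\rk(E_i)/e$. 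With $N:=\sum_{i\in Q_0}\rk(E_i)$ this produces the summed subsolution inequality $\sum_{i\in Q_0}P_i(\Tr f_i)\le B\sum_{i\in Q_0}\Tr f_i+N/e$.

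Next I would make this a genuinely scalar inequality. As each $\sigma_i$ is constant and $P_i+P_i^{*H_i}=\sigma_i\Delta_g$ on functions, write $P_i=\tfrac{\sigma_i}{2}\Delta_g+D_i$ with $D_i$ a first--order skew--adjoint operator killing constants; pulling the constants through the Laplacian gives $\sum_i P_i(\Tr f_i)=\tfrac12\Delta_g\tilde u+\sum_i D_i(\Tr f_i)$ with $\tilde u:=\sum_{i\in Q_0}\sigma_i\Tr f_i$, which is comparable to $u:=\sum_{i\in Q_0}\Tr f_i$ since all $\sigma_i>0$. Thus $\tfrac12\Delta_g\tilde u\le Bu+N/e-\sum_i D_i(\Tr f_i)$, and when tested against a nonnegative weight $\zeta$ the skew--adjointness of the $D_i$ moves the derivative onto $\zeta$, so that the first--order terms only contribute $\int(\mathrm{const})\,|\nabla\zeta|\,u$ and no gradient of $u$ survives. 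This is precisely what lets the standard Moser iteration for subsolutions of $\Delta_g$ run despite the $D_i$ (hence the $P_i$) differing from vertex to vertex; it gives $\max_X\tilde u\le C_1(\|\tilde u\|_{L^2}+N/e)$, and the $L^2$-norm is traded for the $L^1$-norm through $\|\tilde u\|_{L^2}^2\le(\max_X\tilde u)\|\tilde u\|_{L^1}$ and Young's inequality. Using $\tilde u\simeq u$ I would obtain $\max_X\sum_{i\in Q_0}\Tr f_i\le C_2(\|\sum_{i\in Q_0}\Tr f_i\|_{L^1}+N/e)$ with $C_2$ depending only on $B$, the $\sigma_i$ and the fixed geometry, hence independent of $\varepsilon$.

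Finally I would absorb the additive constant $N/e$, which is the point where the normalization enters and which I expect to be the most delicate step. Recall that we may assume $\int_X\log\big(\prod_{j\in Q_0}\det f_j\big)\,d\Vol_g=0$, i.e.\ $\int_X\sum_{j,A}\theta_{A_j}\,d\Vol_g=0$. Applying Jensen's inequality to $e^{(\cdot)}$ and then the arithmetic--geometric mean inequality to the $N$ quantities $\exp(\tfrac1V\int_X\theta_{A_j})$, with $V:=\int_Xd\Vol_g$, gives
\[
\sum_{j\in Q_0}\|\Tr f_j\|_{L^1}=\int_X\sum_{j,A}e^{\theta_{A_j}}\,d\Vol_g\ \ge\ V\sum_{j,A}e^{\frac1V\int_X\theta_{A_j}}\ \ge\ NV\,e^{\frac1N\cdot0}=NV>0,
\]
so $\max_{j\in Q_0}\|f_j\|_{L^1}$ is bounded below by a positive constant independent of $\varepsilon$ (as $\|\Tr f_j\|_{L^1}$ and $\|f_j\|_{L^1}$ are comparable for positive $f_j$). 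Hence $N/e$ is dominated by a fixed multiple of $\max_{j\in Q_0}\|f_j\|_{L^1}$, and combining this with $\max_X|f_i|_{H_i}\le\max_X\Tr f_i\le\max_X\sum_{j\in Q_0}\Tr f_j$ and $\|\Tr f_j\|_{L^1}\le\sqrt{\rk(E_j)}\,\|f_j\|_{L^1}$ I reach $\max_X|f_i|_{H_i}\le C\max_{j\in Q_0}\|f_j\|_{L^1}$ with $C$ independent of $\varepsilon$, as asserted. The genuine difficulties are exactly the two I have isolated: keeping every constant free of $\varepsilon$ (ensured because the only $\varepsilon$-dependent term is bounded by $N/e$), and accommodating the vertex--dependent first--order parts $D_i$ in the iteration (resolved by the integration by parts above).
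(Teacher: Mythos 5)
Your proposal is correct, but it diverges from the paper's proof at the one step where a real choice exists: how to control the perturbation term $-\varepsilon\langle\log f_i,f_i\rangle_{H_i}$. The paper invokes Corollary \ref{cor3.7}~(1), which gives $\varepsilon\max_X|\log f_i|_{H_i}\leq m_K$ uniformly, so the differential inequality from Proposition \ref{prop3.10} becomes purely multiplicative, $\sum_i P_i(\Tr f_i)\leq C_1\sum_i\Tr f_i$, and the sup--$L^1$ bound then follows by a single citation of Lemma 3.2.2 in \cite{LT}. You instead use the pointwise inequality $\theta e^{\theta}\geq -e^{-1}$, which is elementary and avoids Corollary \ref{cor3.7} altogether, but produces the additive constant $N/e$; you then need two extra ingredients the paper does not: (i) a Moser iteration adapted to an inequality with an inhomogeneous term, and (ii) the normalization $\int_X\log\big(\prod_j\det f_j^{(\varepsilon)}\big)d\Vol_g=0$, fed through Jensen and AM--GM, to get the uniform positive lower bound $\max_j\|f_j\|_{L^1}\geq c>0$ that absorbs $N/e$. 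Both ingredients are sound: the lower bound computation is correct, and your decomposition $P_i=\tfrac{\sigma_i}{2}\Delta_g+D_i$ with $D_i$ first-order, skew-adjoint and killing constants (legitimate, since the paper records $P_i+P_i^{*H_i}=\sigma_i\Delta_g$ on functions and the Gauduchon condition makes $P_i^{*}$ annihilate constants) correctly reduces matters to a subsolution estimate for a fixed Laplacian with bounded divergence-form drift. In fact this last point is where your argument is \emph{more} careful than the paper's: the inequality $\sum_i P_i(\Tr f_i)\leq C_1\sum_i\Tr f_i$ involves different operators $P_i$ acting on different functions, so Lemma 3.2.2 of \cite{LT} (stated for a single operator) does not apply verbatim unless all $\alpha_i,\sigma_i$ coincide, and your drift decomposition is precisely the repair. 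The trade-off is scope: your proof establishes the corollary only for solutions satisfying the integral normalization, whereas the paper's proof needs no normalization; this is harmless here because the normalization is a standing assumption of the continuity-method section and Corollary \ref{cor3.11} is only ever applied (in Proposition \ref{prop3.12}) to the normalized family $f^{(\varepsilon)}$, but it is a genuine restriction you should flag.
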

\begin{proof}
Taking $\varsigma=1$, the above proposition shows
\begin{align*}
 \sum_{i\in Q_0}P_i(\Tr f_i)\leq& \sum_{i\in Q_0}\left(\max\limits_X|K^\phi_{(\alpha,\sigma,\tau)}(H_i)|_{H_i}+\varepsilon\max\limits_X|\log f_i|_{H_i}\right)|f_i|_{H_i}\\
\leq & \bigg((1+|Q_0|^2)\sum_{i\in Q_0}\max\limits_X|K^\phi_{(\alpha,\sigma,\tau)}(H_i)|_{H_i}\bigg)\left(\sum_{j\in Q_0}|f_j|_{H_j}\right)\\
\leq& C_1 \sum_{i\in Q_0}\Tr f_i
\end{align*}
where the second inequality  applies Corollary \ref{cor3.7} (1), and the third  one  follows from the inequalities  $C_2^{-1}(\Tr f_i)\leq|f_i|_{H_i}\leq C_2(\Tr f_i)$ since $f\in S^+(\mathcal{E},H)$. By the Lemma 3.2.2 in \cite{LT}, we have
$$\max_X|f_i|_{H_i}\leq C_3\max_X\left(\sum_{i\in Q_0}\Tr f_i\right)\leq C_4\left|\left|\sum_{i\in Q_0}\Tr f_i\right|\right|_{L^1}\leq C \max\limits_{i\in Q_0}\{||f_{i}||_{L^1}\},$$
as desired.
\end{proof}

For $\varepsilon>0$, $x\in X$ and $i\in Q_0$, we denote by  $e_i(\varepsilon,x)$  the largest eigenvalue of $\log f_i^{(\varepsilon)}$ for the solution $f^{(\varepsilon)}$ of $\{L^\varepsilon_{(\alpha,\sigma,\tau)i}(f)=0\}_{i\in Q_0}$ and define  $\rho_i^{(\varepsilon)}=\exp(-M_i^{(\varepsilon)})$ for $M_i^{(\varepsilon)}=\max\limits_Xe_i(\varepsilon,x)$, $\mathfrak{F}_i^{(\varepsilon)}=\rho_i^{(\varepsilon)}f_i^{(\varepsilon)}$. Let $i_0\in Q_0$ be the vertex depending on $\varepsilon$ such that $||f_{i_0}^{(\varepsilon)}||_{L^2}= \max\limits_{i\in Q_0}\{||f_{i}^{(\varepsilon)}||_{L^2}\}$.

\begin{proposition}\label{prop3.12} Assume $\limsup\limits_{\varepsilon\rightarrow 0}||f_{i_0}^{(\varepsilon)}||_{L^2}=\infty$, then there is a sequence $\varepsilon_\mathfrak{k}\rightarrow0$ such that $\rho^{(\varepsilon_\mathfrak{k})}_{i_0}\rightarrow 0$ and  $\mathfrak{F}_{i_0}^{(\varepsilon_\mathfrak{k})}$ converges weakly in $L_1^2$-norm to an $\mathfrak{F}_\infty\neq 0$.
 \end{proposition}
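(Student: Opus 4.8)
The plan is to regard $\mathfrak{F}_{i_0}^{(\varepsilon)}=\rho_{i_0}^{(\varepsilon)}f_{i_0}^{(\varepsilon)}$ as a sequence of endomorphisms normalized to have operator norm $\le 1$, to extract a weak $L_1^2$-limit from a uniform first-order energy bound, and to prevent the limit from vanishing by means of the mean-value inequality of Corollary \ref{cor3.11}. Since the distinguished vertex $i_0$ a priori depends on $\varepsilon$ while $Q_0$ is finite, I would first use the hypothesis $\limsup_{\varepsilon\to0}\|f_{i_0}^{(\varepsilon)}\|_{L^2}=\infty$ together with a pigeonhole argument to fix once and for all a single vertex, still written $i_0$, and a sequence $\varepsilon_{\mathfrak{k}}\to 0$ along which $\|f_{i_0}^{(\varepsilon_{\mathfrak{k}})}\|_{L^2}\to\infty$ and $i_0$ realizes $\max_{i\in Q_0}\|f_i^{(\varepsilon_{\mathfrak{k}})}\|_{L^2}$. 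By the definitions of $\rho_{i_0}$ and $M_{i_0}$, every eigenvalue of $\mathfrak{F}_{i_0}=\rho_{i_0}f_{i_0}$ lies in $(0,1]$, so $0<\mathfrak{F}_{i_0}\le\mathrm{Id}$ pointwise; in particular $\|\mathfrak{F}_{i_0}\|_{L^\infty}$ is bounded independently of $\varepsilon$, and the largest eigenvalue of $\mathfrak{F}_{i_0}$ equals $1$ at some point of $X$.

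\emph{Degeneration of $\rho_{i_0}$ and a two-sided $L^2$ estimate.} The second step is the two-sided bound $c\,(\rho_{i_0})^{-1}\le \|f_{i_0}\|_{L^2}\le C\,(\rho_{i_0})^{-1}$. The upper bound is elementary, from $\|f_{i_0}\|_{L^2}\le C\max_X|f_{i_0}|_{H_{i_0}}\le C\max_X\lambda_{\max}(f_{i_0})=C(\rho_{i_0})^{-1}$. The lower bound is where Corollary \ref{cor3.11} enters: it gives $(\rho_{i_0})^{-1}=\max_X\lambda_{\max}(f_{i_0})\le\max_X|f_{i_0}|_{H_{i_0}}\le C\max_{j\in Q_0}\|f_j\|_{L^1}\le C'\|f_{i_0}\|_{L^2}$, where the last step combines $\|f_j\|_{L^1}\le C\|f_j\|_{L^2}$ on the compact $X$ with the choice of $i_0$ as the $L^2$-maximizer. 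Multiplying through by $\rho_{i_0}$ gives $c\le\|\mathfrak{F}_{i_0}\|_{L^2}=\rho_{i_0}\|f_{i_0}\|_{L^2}\le C$. Because $\|f_{i_0}^{(\varepsilon_{\mathfrak{k}})}\|_{L^2}\to\infty$, the upper bound forces $\rho_{i_0}^{(\varepsilon_{\mathfrak{k}})}\to0$, while the lower bound $\|\mathfrak{F}_{i_0}\|_{L^2}\ge c>0$ is exactly what will rule out a zero limit.

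\emph{The uniform $L_1^2$-bound.} The heart of the argument is to bound $\int_X|\partial^\pm_{H_{i_0}}\mathfrak{F}_{i_0}|^2$. Integrating Proposition \ref{prop3.10} with $\varsigma=1$ over $X$ and using that on the Gauduchon manifold $P_i$ integrates to $0$ on functions, the left-hand side drops out and, after moving the nonnegative gradient and commutator terms to the left, I obtain
\[
\sum_{i\in Q_0}\int_X\big(\alpha_i\sigma_i|f_i^{-\frac12}\partial^+_{H_i}f_i|^2_{H_i}+(1-\alpha_i)\sigma_i|f_i^{-\frac12}\partial^-_{H_i}f_i|^2_{H_i}\big)\,d\Vol_g\le -\sum_{i\in Q_0}\int_X\big\langle K^\phi_{(\alpha,\sigma,\tau)}(H_i)+\varepsilon\log f_i,\,f_i\big\rangle_{H_i}\,d\Vol_g.
\]
I would bound the right-hand side by $C\sum_{i}\|f_i\|_{L^1}+C'$: the curvature term satisfies $-\langle K^\phi_{(\alpha,\sigma,\tau)}(H_i),f_i\rangle_{H_i}\le \max_X|K^\phi_{(\alpha,\sigma,\tau)}(H_i)|_{H_i}\,|f_i|_{H_i}\le C\,|f_i|_{H_i}$, whose integral is $\le C\|f_i\|_{L^1}$, while the perturbation is uniformly small since $-\varepsilon\langle\log f_i,f_i\rangle_{H_i}\le \varepsilon\,\rk(E_i)/e$, using $-\theta e^{\theta}\le 1/e$ for every $\theta\in\mathbb{R}$. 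As pointwise $f_{i_0}\le(\rho_{i_0})^{-1}\mathrm{Id}$, one has $f_{i_0}^{-1}\ge\rho_{i_0}\mathrm{Id}$, hence
\[
|\partial^\pm_{H_{i_0}}\mathfrak{F}_{i_0}|^2=(\rho_{i_0})^2|\partial^\pm_{H_{i_0}}f_{i_0}|^2\le \rho_{i_0}\,|f_{i_0}^{-\frac12}\partial^\pm_{H_{i_0}}f_{i_0}|^2 .
\]
Feeding this into the integrated inequality and using that $\rho_{i_0}\sum_{i}\|f_i\|_{L^1}\le C|Q_0|\,\rho_{i_0}\|f_{i_0}\|_{L^2}=C|Q_0|\,\|\mathfrak{F}_{i_0}\|_{L^2}$ is bounded by the previous step, I conclude $\|\partial^\pm_{H_{i_0}}\mathfrak{F}_{i_0}\|_{L^2}\le C$. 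As $\mathfrak{F}_{i_0}$ is Hermitian, its full Chern-covariant derivative is controlled by its $(1,0)$-parts, so $\|\mathfrak{F}_{i_0}^{(\varepsilon_{\mathfrak{k}})}\|_{L_1^2}$ is uniformly bounded.

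\emph{Conclusion and main difficulty.} Weak compactness in the Hilbert space $L_1^2$ then gives, after passing to a further subsequence, a weak limit $\mathfrak{F}_\infty$, and the compact embedding $L_1^2\hookrightarrow L^2$ promotes this to strong $L^2$-convergence; therefore $\|\mathfrak{F}_\infty\|_{L^2}=\lim_{\mathfrak{k}}\|\mathfrak{F}_{i_0}^{(\varepsilon_{\mathfrak{k}})}\|_{L^2}\ge c>0$ and $\mathfrak{F}_\infty\ne0$. I expect the genuine obstacle to be precisely this nonvanishing: the normalization only fixes the top eigenvalue at a single point, so a naïve weak limit could a priori concentrate and disappear. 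The indispensable input is Corollary \ref{cor3.11}, a Moser/mean-value inequality valid on the Gauduchon manifold, which converts the pointwise normalization into the quantitative $L^2$-lower bound; and the selection of $i_0$ as the $L^2$-maximal vertex is what allows the coupled contributions of all other vertices to be absorbed into $\|f_{i_0}\|_{L^2}$, closing both the energy estimate and the nondegeneracy simultaneously.
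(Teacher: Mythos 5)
Your proposal is correct and follows essentially the same route as the paper's proof: the two-sided bound $C^{-1}\le\|\rho_{i_0}f_{i_0}\|_{L^2}\le C$ via Corollary \ref{cor3.11} and the maximality of $i_0$, the uniform $L_1^2$-energy bound from Proposition \ref{prop3.10} with $\varsigma=1$ (using the Gauduchon vanishing of $\int_X P_i$ on functions), and weak $L_1^2$-compactness plus the compact embedding into $L^2$ to rule out a vanishing limit. Your only departures are cosmetic refinements the paper leaves implicit --- the pigeonhole step fixing the vertex $i_0$ along the subsequence, and the elementary bound $-\theta e^{\theta}\le 1/e$ for the perturbation term in place of the paper's appeal to Corollary \ref{cor3.7}(1).
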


\begin{proof} Firstly, by definition, we have  $\rho_i^{(\varepsilon)} f_i^{(\varepsilon)}\leq \mathrm{Id}_{E_i}$ and $\max\limits_X(\rho_i^{(\varepsilon)}|f_i^{(\varepsilon)}|_{H_i})\geq 1$ for $\forall i\in Q_0$. Then  we  get
\begin{align*}
  1\leq\max\limits_X(\rho_{i_0}^{(\varepsilon)}|f_{i_0}^{(\varepsilon)}|_{H_{i_0}})\leq C_1\rho^{(\varepsilon)}_{i_0}\sum_{i\in Q_0}||f_i^{(\varepsilon)}||_{L^1}\leq C_2\rho^{(\varepsilon)}_{i_0}\sum_{i\in Q_0}||f_i^{(\varepsilon)}||_{L^2}\leq C_3||\rho^{(\varepsilon)}_{i_0}f_{i_0}^{(\varepsilon)}||_{L^2},
\end{align*}where the second inequality follows from Corollary \ref{cor3.11}.
Therefore, there is a constant $C_0$ independent of $\varepsilon$ such that
\begin{align*}
  C_0^{-1}\leq ||\rho^{(\varepsilon)}_{i_0}f_{i_0}^{(\varepsilon)}||_{L^2}\leq C_0.
\end{align*}
In particular, the above inequality on the left hand side means that if $\mathfrak{F}_{i_0}^{(\varepsilon_\mathfrak{k})}$ converges to $\mathfrak{F}_\infty$ weakly in $L_1^2$-norm then we may assume it converges strongly in $L^2$-norm, hence $\mathfrak{F}_\infty\neq 0$. On the other hand, by Corollary \ref{cor3.7} (1) and Proposition \ref{prop3.10}, we have
\begin{align*}
 \sum_{i\in Q_0}||D^+_{H_i}\rho_i^{(\varepsilon)} f_i^{(\varepsilon)}||_{L^2}&\leq 2 \sum_{i\in Q_0}\int_X\left|(\rho_i^{(\varepsilon)} f_i^{(\varepsilon)})^{-\frac{1}{2}}\partial^+_{H_i}(\rho_i^{(\varepsilon)} f_i^{(\varepsilon)})\right|^2_{H_i}d\Vol_g\\
 &\leq\frac{2}{\min\limits_{i\in Q_0}\{\alpha_i\sigma_i\}} \sum_{i\in Q_0}\int_X\left(-\big\langle K^\phi_{(\alpha,\sigma,\tau)}(H_i)+\varepsilon\log f_i^{(\varepsilon)}, \rho_i^{(\varepsilon)} f^{(\varepsilon)}_i\big\rangle_{H_i}-P_i(\Tr f_i^{(\varepsilon)})\right)d\Vol_g\\
 &\leq\frac{4}{\min\limits_{i\in Q_0}\{\alpha_i\sigma_i\}}\sum_{i\in Q_0}\max\limits_X\big|K^\phi_{(\alpha,\sigma,\tau)}(H_i)\big|_{H_i}||\rho_i^{(\varepsilon)} f_i^{(\varepsilon)}||_{L^1}\\ &\leq C_4,
\end{align*}
where $D^+_{H_i}$ is the induced Chern connection on the endomorphism bundle. This means   that $\rho^{(\varepsilon)}_{i_0}f_{i_0}^{(\varepsilon)}$ is $L_1^2$-bounded uniformly. By assumption that $L^2$-norm of $f_{i_0}^{(\varepsilon)}$ is unbounded, there is a sequence $\varepsilon_\mathfrak{k}\rightarrow 0$ such that $\rho^{(\varepsilon_\mathfrak{k})}_{i_0}\rightarrow 0$ and  $\mathfrak{F}_{i_0}^{(\varepsilon_\mathfrak{k})}$ converges weakly in $L_1^2$-norm.
\end{proof}

 We have shown that there is a sequence $\varepsilon_\mathfrak{k}\rightarrow 0$ such that $\mathfrak{F}^{(\varepsilon_\mathfrak{k})}_{i_0}$ converges weakly  to a non-zero $L_1^2$-endomorphism $\mathfrak{F}^\infty$ of $\mathcal{E}$. Similarly, for $0<\varsigma\leq 1$, define $(\mathfrak{F}_i^{(\varepsilon)})^\varsigma=(\rho_i^{(\varepsilon)} f_i^{(\varepsilon)})^\varsigma$ and $(\mathfrak{F}^{(\varepsilon)})^\varsigma=\bigoplus\limits_{i\in Q_0}(\mathfrak{F}_i^{(\varepsilon)})^\varsigma$, then there exists a sequence $\varsigma_{\mathfrak{k}}\rightarrow 0$ and $\widetilde{\mathfrak{F}^\infty}\neq 0$ such that $(\mathfrak{F}^\infty)^{\varsigma_{\mathfrak{k}}}\rightarrow\widetilde{\mathfrak{F}^\infty}$ weakly in $L_1^2$-norm.
Then we introduce the endomorphism
$$\Theta=\mathrm{Id}_\mathcal{E}-\widetilde{\mathfrak{F}^\infty}\in L_1^2(\End(\mathcal{E})).$$

\begin{proposition}\label{prop3.13}
$\Theta$ satisfies the following identities  in $L^1$-norm
\begin{enumerate}                                                                 \item[(1)] $\Theta^2=\Theta=\Theta^{*H}$,                                                                 \item[(2)] $(\mathrm{Id}_\mathcal{E}-\Theta)\circ \bar\partial_{\pm}\Theta=0$,                                                                 \item[(3)] $(\mathrm{Id}-\Theta)_{h(a)}\circ\phi_a\circ\Theta_{t(a)}=0$ for $\forall a\in Q_1$.                                                               \end{enumerate}
Therefore, $\Theta$ defines a  $Q$-coherent subsheaf $\mathcal{F}$ of $\mathcal{E}$ with the property that $$0<\sum_{i\in Q_0}\rk(F_i)<\sum_{i\in Q_0}\rk(E_i).$$
\end{proposition}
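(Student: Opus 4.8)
The plan is to show that $\Theta$ is an $L_1^2$ orthogonal projection which is weakly holomorphic for both $I_+$ and $I_-$ and invariant under $\phi$, and then to invoke the Uhlenbeck--Yau regularity theorem together with the extension result (Proposition~\ref{prop2.8}, \cite{BA}) to realize its image as a coherent $Q$-subsheaf.

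\emph{Pointwise structure and (1).} Each $\mathfrak{F}_i^{(\varepsilon)}=\rho_i^{(\varepsilon)}f_i^{(\varepsilon)}$ is $H_i$-self-adjoint, positive, with spectrum in $(0,1]$, so $0\le\mathfrak{F}_i^{(\varepsilon)}\le\mathrm{Id}_{E_i}$. By Rellich the weak $L_1^2$-convergence along $\varepsilon_\mathfrak{k}$ improves to strong $L^2$- and, on a further subsequence, almost-everywhere convergence, so $\mathfrak{F}^\infty$ is self-adjoint with spectrum in $[0,1]$ a.e.; the same holds for the powers $(\mathfrak{F}^\infty)^{\varsigma_\mathfrak{k}}$, and as $\varsigma_\mathfrak{k}\to0$ these converge a.e.\ to the spectral projection $\pi:=\widetilde{\mathfrak{F}^\infty}$ onto the positive eigenspaces of $\mathfrak{F}^\infty$ (for $\mu\in(0,1]$ one has $\mu^\varsigma\uparrow1$, while $0^\varsigma=0$). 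Hence $\pi^2=\pi=\pi^{*H}$ a.e., and $\Theta=\mathrm{Id}_\mathcal{E}-\pi$ is an orthogonal projection, which is (1).

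\emph{The vanishing (2)--(3).} Multiplying the inequality of Proposition~\ref{prop3.10} by the constants $\rho_i^\varsigma$, summing over $i\in Q_0$, integrating over $X$, and using that $g$ is Gauduchon (so $\int_X P_i(u)\,d\Vol_g=0$ for functions $u$), the $P_i$-terms drop and one obtains
\begin{align*}
\sum_{i\in Q_0}&\int_X\Big(\alpha_i\sigma_i\big|\mathfrak{F}_i^{-\frac{\varsigma}{2}}\partial^+_{H_i}\mathfrak{F}_i^\varsigma\big|_{H_i}^2+(1-\alpha_i)\sigma_i\big|\mathfrak{F}_i^{-\frac{\varsigma}{2}}\partial^-_{H_i}\mathfrak{F}_i^\varsigma\big|_{H_i}^2\Big)d\Vol_g+\sum_{a\in Q_1}\int_X\big|\mathfrak{F}_{t(a)}^{-\frac{\varsigma}{2}}[\phi^{*H},\mathfrak{F}^\varsigma]_a\big|_H^2\,d\Vol_g\\
&\le-\sum_{i\in Q_0}\int_X\big\langle K^\phi_{(\alpha,\sigma,\tau)}(H_i)+\varepsilon\log f_i,\,\mathfrak{F}_i^\varsigma\big\rangle_{H_i}\,d\Vol_g.
\end{align*}
Writing $\log f_i=\log\mathfrak{F}_i+M_i^{(\varepsilon)}\mathrm{Id}$ and letting $\varepsilon_\mathfrak{k}\to0$ for fixed $\varsigma$, the contribution $-\varepsilon\langle\log\mathfrak{F}_i,\mathfrak{F}_i^\varsigma\rangle_{H_i}$ is $O(\varepsilon/\varsigma)\to0$ (using $\sup_{x\in(0,1]}(-x^\varsigma\log x)=\tfrac{1}{e\varsigma}$), while $-\varepsilon M_i^{(\varepsilon)}\Tr\mathfrak{F}_i^\varsigma\le0$ with $\varepsilon M_i^{(\varepsilon)}\le m_K$ by Corollary~\ref{cor3.7}(1); since $\mathfrak{F}_i^{-\varsigma/2}\ge\mathrm{Id}$ and $L^2$-norms are weakly lower semicontinuous, the right-hand side is bounded above by a constant independent of $\varsigma$, which is exactly the uniform $L_1^2$ bound producing $\pi$. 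Feeding this into the Uhlenbeck--Yau argument of \cite{LT,Hu} and letting $\varsigma_\mathfrak{k}\to0$ with $(\mathfrak{F}^\infty)^\varsigma\to\pi$, one concludes that the part of $\bar\partial_\pm\pi$ normal to $\mathrm{Im}\,\pi$ and the part of $\phi$ carrying $\mathrm{Im}\,\pi$ into its orthogonal complement vanish in $L^1$; equivalently $(\mathrm{Id}_\mathcal{E}-\Theta)\bar\partial_\pm\Theta=0$ and $(\mathrm{Id}-\Theta)_{h(a)}\circ\phi_a\circ\Theta_{t(a)}=0$, which are (2) and (3). Both complex structures appear symmetrically because Proposition~\ref{prop3.10} carries $\partial^+$ and $\partial^-$ with the strictly positive weights $\alpha_i\sigma_i$ and $(1-\alpha_i)\sigma_i$.

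\emph{Subsheaf and ranks.} By (1)--(3), $\Theta$ is a weakly holomorphic, $\phi$-invariant subbundle projection for each of $I_\pm$; by the Uhlenbeck--Yau regularity theorem it is smooth off analytic subsets $S_{\pm i}$ of codimension at least $2$, where its image defines $I_\pm$-holomorphic, $\phi$-invariant subbundles $F_{\pm i}\subset E_i$, and Proposition~\ref{prop2.8} extends these, assembling the coherent $Q$-subsheaf $\mathcal{F}=(\mathcal{F}_+,\mathcal{F}_-,\mathbb{S}_+,\mathbb{S}_-)$ with $F_i=\mathrm{Im}\,\Theta_i$. Finally, $\mathfrak{F}^\infty\ne0$ gives $\pi\ne0$, hence $\Theta\ne\mathrm{Id}_\mathcal{E}$ and $\sum_i\rk(F_i)<\sum_i\rk(E_i)$; and $\rho_{i_0}^{(\varepsilon_\mathfrak{k})}\to0$ forces $\mathfrak{F}^\infty$ to be singular on a set of positive measure (cf.\ \cite{LT,Hu}), so that $\pi\ne\mathrm{Id}_\mathcal{E}$, i.e.\ $\Theta\ne0$ and $\sum_i\rk(F_i)>0$.

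The main obstacle is the middle step: extracting the exact vanishing in (2)--(3) from the single integral inequality via the Uhlenbeck--Yau limiting procedure in this coupled, two-complex-structure setting, and then upgrading the resulting $L_1^2$ weakly holomorphic projection to a genuine coherent $Q$-subsheaf through the regularity and extension theorems. The remaining points---the spectral description of $\pi$ and the rank count---are comparatively soft once the uniform estimates of Proposition~\ref{prop3.10} and Corollary~\ref{cor3.7} are in hand.
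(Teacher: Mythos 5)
Your overall skeleton tracks the paper's proof: part (1) via the spectral description of $\widetilde{\mathfrak{F}^\infty}$ (a correct elaboration of what the paper dismisses as ``obvious''), the estimate coming from Proposition \ref{prop3.10} plus Corollary \ref{cor3.7}(1), and the endgame via Uhlenbeck--Yau regularity, Proposition \ref{prop2.8}, and the determinant normalization for the rank count. But there is a genuine gap exactly where you flag ``the main obstacle'': items (2) and (3) are never actually proved. What your middle step yields (using the Gauduchon identity $\int_X P_i(u)\,d\Vol_g=0$) is a bound, uniform in $\varepsilon$ and $\varsigma$, on
$$\sum_{i\in Q_0}\int_X\Big(\alpha_i\sigma_i\big|(\mathfrak{F}_i^{(\varepsilon)})^{-\frac{\varsigma}{2}}\partial^+_{H_i}(\mathfrak{F}_i^{(\varepsilon)})^{\varsigma}\big|^2_{H_i}+\cdots\Big)d\Vol_g+\sum_{a\in Q_1}\int_X\big|(\mathfrak{F}_{t(a)}^{(\varepsilon)})^{-\frac{\varsigma}{2}}\big[\phi^{*H},(\mathfrak{F}^{(\varepsilon)})^{\varsigma}\big]_a\big|^2_H\,d\Vol_g\ \leq\ C,$$
but a uniform bound on these weighted derivatives, together with weak convergence, does not by itself force $(\mathrm{Id}_\mathcal{E}-\Theta)\circ\bar\partial_{\pm}\Theta=0$ or $(\mathrm{Id}-\Theta)_{h(a)}\circ\phi_a\circ\Theta_{t(a)}=0$; ``feeding this into the Uhlenbeck--Yau argument of \cite{LT,Hu}'' is a citation of the conclusion, not a proof of it. The missing mechanism, which is the entire content of the paper's proof, is the two-parameter trick: one writes $(\mathrm{Id}_\mathcal{E}-\Theta)\circ\bar\partial_\pm\Theta$ (equivalently, up to sign and adjoints, $\Theta\circ\partial^\pm_{H}\Theta$, using $\Theta=\Theta^2=\Theta^{*H}$) as a Fatou limit of $(\mathrm{Id}_{E_i}-(\mathfrak{F}_i^{(\varepsilon_\mathfrak{k})})^{\varsigma_1})\circ\partial^\pm_{H_i}(\mathfrak{F}_i^{(\varepsilon_\mathfrak{k})})^{\varsigma_2}$ with two \emph{independent} exponents, applies the operator inequality $\mathrm{Id}_{E_i}-(\mathfrak{F}_i^{(\varepsilon_\mathfrak{k})})^{\varsigma_1}\leq\frac{2\varsigma_1}{2\varsigma_1+\varsigma_2}(\mathfrak{F}_i^{(\varepsilon_\mathfrak{k})})^{-\frac{\varsigma_2}{2}}$ (valid for $0\leq\varsigma_1\leq\frac{\varsigma_2}{2}\leq1$) to dominate everything by $\big(\frac{2\varsigma_1}{2\varsigma_1+\varsigma_2}\big)^2$ times the uniformly bounded integral above, and then sends $\varsigma_1\to0$ \emph{before} $\varsigma_2\to0$, so that the vanishing prefactor annihilates the bounded quantity. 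Your sketch carries a single exponent $\varsigma$; with $\varsigma_1=\varsigma_2=\varsigma$ the prefactor equals $\frac{2}{3}$ and does not tend to zero, so the one-parameter version of the argument genuinely fails --- this is precisely why the paper's chain of inequalities is organized around the iterated limit $\lim_{\varsigma_2\to0}\lim_{\varsigma_1\to0}\lim_{\mathfrak{k}\to\infty}$.

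A secondary, repairable flaw: the displayed estimate cannot be obtained by ``multiplying Proposition \ref{prop3.10} by the constants $\rho_i^\varsigma$ and summing over $i$.'' Proposition \ref{prop3.10} is a single inequality already summed over vertices and arrows, and the arrow terms $[\phi^{*H},f^\varsigma]_a$ mix the two distinct constants $\rho_{h(a)}^\varsigma$ and $\rho_{t(a)}^\varsigma$, so they do not rescale homogeneously under $f_i\mapsto\rho_i f_i$. One must instead re-run the proof of Proposition \ref{prop3.10} (the pointwise spectral computation) with $\mathfrak{F}^{(\varepsilon)}$ in place of $f^{(\varepsilon)}$, keeping $\varepsilon\log f_i^{(\varepsilon)}$ in the pairing --- which is what the paper implicitly does in the third inequality of its proof of (2). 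Your treatment of the $\varepsilon\log f_i$ term (splitting off $M_i^{(\varepsilon)}\mathrm{Id}$ and using Corollary \ref{cor3.7}(1)) and your rank argument are fine at the level of rigor of the paper itself.
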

\begin{proof}(1) These facts are obvious.

(2) As the proof of Proposition 3.4.6 iii) in \cite{LT}, the following calculations lead to the identities in (2):
\begin{align*}
  &(\min\limits_{i\in Q_0}\{\alpha_i\sigma_i,(1-\alpha_i)\sigma_i,1\})\cdot(||(\mathrm{Id}_\mathcal{E}-\Theta)\circ \bar\partial_{+}\Theta||^2_{L^2}+||(\mathrm{Id}_\mathcal{E}-\Theta)\circ \bar\partial_{-}\Theta||^2_{L^2})\\
  \leq&\lim\limits_{\varsigma_2\rightarrow 0}\lim\limits_{\varsigma_1\rightarrow 0}\lim\limits_{\mathfrak{k}\rightarrow\infty}\Bigg(\sum_{i\in  Q_0}\int_X\bigg(\alpha_i\sigma_i\Big|(\mathrm{Id}_{E_i}-(\mathfrak{F}^{(\varepsilon_\mathfrak{k})}_i)^{\varsigma_1})\circ\partial^+_{H_i}(\mathfrak{F}^{(\varepsilon_\mathfrak{k})}_i)^{\varsigma_2}\Big|^2_{H_i}
  \\&\ \ \ \ \ \ \ \ \ \ \ \ \ \ \ \ \ \ \ \ \ \ \ \ \ \ +(1-\alpha_i)\sigma_i\Big|(\mathrm{Id}_{E_i}-(\mathfrak{F}^{(\varepsilon_\mathfrak{k})}_i)^{\varsigma_1})\circ\partial^-_{H_i}(\mathfrak{F}^{(\varepsilon_\mathfrak{k})}_i)^{\varsigma_2}\Big|^2_{H_i}\bigg)d\Vol_g\Bigg)\\
  \leq &\lim\limits_{\varsigma_2\rightarrow 0}\lim\limits_{\varsigma_1\rightarrow 0}\lim\limits_{\mathfrak{k}\rightarrow\infty}\left(\frac{2\varsigma_1}{2\varsigma_1+\varsigma_2}\right)^2\cdot\Bigg(\sum_{i\in  Q_0}\int_X\bigg(\alpha_i\sigma_i\Big|(\mathfrak{F}^{(\varepsilon_\mathfrak{k})}_i)^{-\frac{\varsigma_2}{2}}\circ\partial^+_{H_i}(\mathfrak{F}^{\varepsilon_\mathfrak{k}}_i)^{\varsigma_2}\Big|^2_{H_i}\\
  & \ \ \ \ \ \ \ \ \ \ \ \ \ \ \ \ \ \ \ \ \ \ \ \ \ \ \ \ \ \ \ \ \ \ \ \ \ \ \ \ \ \ \ \ \ \ \ \ +(1-\alpha_i)\sigma_i\Big|(\mathfrak{F}^{(\varepsilon_\mathfrak{k})}_i)^{-\frac{\varsigma_2}{2}}\circ\partial^-_{H_i}(\mathfrak{F}^{(\varepsilon_\mathfrak{k})}_i)^{\varsigma_2}\Big|^2_{H_i}\bigg)d\Vol_g\Bigg)\\
\leq&\lim\limits_{\varsigma_2\rightarrow 0}\lim\limits_{\varsigma_1\rightarrow 0}\lim\limits_{\mathfrak{k}\rightarrow\infty}\left(\frac{2\varsigma_1}{2\varsigma_1+\varsigma_2}\right)^2\cdot\Bigg(\sum_{i\in  Q_0}\int_X \bigg(-\frac{1}{\varsigma_2}P_i\Big(\Tr (\mathfrak{F}^{(\varepsilon_\mathfrak{k})}_i)^{\varsigma_2}\Big)\\&\ \ \ \ \ \ \ \ \ \ \ \  \ \ \ \ \ \ \ \ \ \ \ \ \ \ \ \ \ \ \ \ \ \ \ \ \ \ \ \ \ \ \ \ \  \ \ \ -\Big\langle K^\phi_{(\alpha,\sigma,\tau)}(H_i)+\varepsilon\log (f^{(\varepsilon_\mathfrak{k})}_i), (\mathfrak{F}^{(\varepsilon_\mathfrak{k})}_i)^{\varsigma_2}\Big\rangle_{H_i}\bigg)d\Vol_g\Bigg)\\
\leq &C\cdot\lim\limits_{\varsigma_2\rightarrow 0}\lim\limits_{\varsigma_1\rightarrow 0}\left(\frac{2\varsigma_1}{2\varsigma_1+\varsigma_2}\right)^2\\=&0
\end{align*}
for $0\leq \varsigma_1\leq\frac{\varsigma_2}{2}\leq 1$, where the first inequality is due to Fatou lemma, the second one is because of  the inequality
$\mathrm{Id}_{E_i}-(\mathfrak{F}^{(\varepsilon_\mathfrak{k})}_i)^{\varsigma_1}\leq \frac{2\varsigma_1}{2\varsigma_1+\varsigma_2}(\mathfrak{F}^{(\varepsilon_\mathfrak{k})}_i)^{-\frac{\varsigma_2}{2}}$, the third one is as the result of Proposition \ref{prop3.10}, and the last one follows from  Corollary \ref{cor3.7} (1).

(3) Similarly as above, since $$\Big((\mathrm{Id}-\Theta)_{h(a)}\circ\phi_a\circ\Theta_{t(a)}\Big)^{*H}=-\Theta_{t(a)}\circ[\phi^{*H},\Theta]_a=\Theta_{t(a)}\circ[\phi^{*H},\mathrm{Id}-\Theta]_a,$$ we have
\begin{align*}
&\sum_{a\in Q_1}|| (\mathrm{Id}-\Theta)_{h(a)}\circ\phi_a\circ\Theta_{t(a)}||^2_{L^2}\\
=&\lim\limits_{\varsigma_2\rightarrow 0}\lim\limits_{\varsigma_1\rightarrow 0}\lim\limits_{\mathfrak{k}\rightarrow\infty}\Bigg(\sum_{a\in Q_1}\int_X\Big|\big(\mathrm{Id}-(\mathfrak{F}^{(\varepsilon_\mathfrak{k})})^{\varsigma_1}\big)_{t(a)}\circ\big[\phi^{*H},(\mathfrak{F}^{(\varepsilon_\mathfrak{k})})^{\varsigma_2}\big]_a\Big|^2_{H}d\Vol_g\Bigg)\\
\leq&\lim\limits_{\varsigma_2\rightarrow 0}\lim\limits_{\varsigma_1\rightarrow 0}\lim\limits_{\mathfrak{k}\rightarrow\infty}\Bigg(\frac{2\varsigma_1}{2\varsigma_1+\varsigma_2}\Bigg)^2\cdot \Bigg(\sum_{a\in Q_1}\int_X\Big|\big(\mathfrak{F}^{(\varepsilon_\mathfrak{k})}_{t(a)}\big)^{-\frac{\varsigma_2}{2}}\circ\big[\phi^{*H},(\mathfrak{F}^{(\varepsilon_\mathfrak{k})})^{\varsigma_2}\big]_a\Big|^2_{H}d\Vol_g\Bigg)\\
\leq&\lim\limits_{\varsigma_2\rightarrow 0}\lim\limits_{\varsigma_1\rightarrow 0}\lim\limits_{\mathfrak{k}\rightarrow\infty}\Bigg(\frac{2\varsigma_1}{2\varsigma_1+\varsigma_2}\Bigg)^2\cdot\Bigg(\sum_{i\in  Q_0}\int_X \bigg(-\frac{1}{\varsigma_2}P_i\Big(\Tr (\mathfrak{F}^{(\varepsilon_\mathfrak{k})}_i)^{\varsigma_2}\Big)\\&\ \ \ \ \ \ \ \ \ \ \ \ \ \ \ \ \ \ \ \ \ \ \ \ \ \  \ \ \ \ \ \ \ \ \ \ \ \ \ \ \ \ \ \ \ \  \ \ \ -\Big\langle K^\phi_{(\alpha,\sigma,\tau)}(H_i)+\varepsilon\log (f^{(\varepsilon_\mathfrak{k})}_i), (\mathfrak{F}^{(\varepsilon_\mathfrak{k})}_i)^{\varsigma_2}\Big\rangle_{H_i}\bigg)d\Vol_g\Bigg)\\
= & 0,
\end{align*}
which indicates the desired identities.

The existence of $\mathcal{F}$ is due to the classical result of Uhlenbeck and Yau \cite{UY}. Non-vanishing of  $\mathfrak{F}^\infty$ implies $\sum\limits_{i\in Q_0}\rk(F_i)<\sum\limits_{i\in Q_0}\rk(E_i)$. On the other hand,  $\int_X\log(\prod\limits_{i\in Q_0}\det f_i^{(\varepsilon)})d\Vol_g=0$ means that almost everywhere either there exists a vertex $i\in Q_0$  such that $\lim\limits_{\varepsilon\rightarrow 0}|f_i^{(\varepsilon)}|_{H_i}<\infty$ and $\det f_i^{(\varepsilon)}\rightarrow 0$, or   $\lim\limits_{\varepsilon\rightarrow 0}f_i^{(\varepsilon)}$ has zero eigenvalue for some vertex $i$ with $|f_i^{(\varepsilon)}|_{H_i}\rightarrow \infty$, whenever we have $\sum\limits_{i\in Q_0}\rk(\mathfrak{F}_i^{(\varepsilon)})<\sum\limits_{i\in Q_0}\rk(E_i)$, hence $\sum\limits_{i\in Q_0}\rk(F_i)>0$.
\end{proof}

\begin{lemma}\label{lem3.14}
$\mu_{\alpha,\sigma,\tau}(\mathcal{F})\geq\mu_{\alpha,\sigma,\tau}(\mathcal{E})$.
\end{lemma}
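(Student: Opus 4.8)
The plan is to compute the $(\alpha,\sigma,\tau)$-slope of $\mathcal{F}$ from the Chern--Weil expression attached to the Hermitian projection $\Theta$ produced in Proposition \ref{prop3.13}, and then to feed in the integrated form of the differential inequality of Proposition \ref{prop3.10}, passing to the limit along the sequence used to construct $\Theta$. First I would record the algebraic reductions. Since $\Theta$ is a weakly holomorphic projection defining the coherent $Q$-subsheaf $\mathcal{F}$ (Proposition \ref{prop2.8} and Proposition \ref{prop3.13}), the Chern--Weil formula underlying Lemma \ref{lem3.1} gives, for each vertex and each sign,
\[
\deg_\pm(F_{\pm i})=\frac{(n-1)!}{2\pi}\int_X\big[\Tr(\sqrt{-1}\,\Theta_i\Lambda_\pm\mathbb{F}^\pm_{H_i})-|\bar\partial_\pm\Theta_i|^2_{H_i}\big]d\Vol_g ,
\]
where $\Theta_i$ simultaneously serves as the $I_\pm$-orthogonal projection because $F_{+i}$ and $F_{-i}$ agree with $F_i$ as smooth bundles off $S_i$. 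Using $\sqrt{-1}(\alpha_i\sigma_i\Lambda_+\mathbb{F}^+_{H_i}+(1-\alpha_i)\sigma_i\Lambda_-\mathbb{F}^-_{H_i})=K^\phi_{(\alpha,\sigma,\tau)}(H_i)+\lambda(\tau_i+\mu_{\alpha,\sigma,\tau}(\mathcal{E})\sigma_i)\mathrm{Id}_{E_i}-\big(\sum_{a\in h^{-1}(i)}\phi_a\circ\phi_a^{*H}-\sum_{a\in t^{-1}(i)}\phi_a^{*H}\circ\phi_a\big)$, the normalization $\int_X\Tr\Theta_i\,d\Vol_g=\rk(F_i)\int_Xd\Vol_g$, the vanishing $\sum_i\int_X\Tr K^\phi_{(\alpha,\sigma,\tau)}(H_i)\,d\Vol_g=0$, and the pointwise identity $[\Theta,\phi]_a=\phi_a^\bot$ that follows from item (3) of Proposition \ref{prop3.13}, I obtain after cancelling the $\tau$-contributions the master formula
\[
\tfrac{2\pi}{(n-1)!}\big(\deg_{\alpha,\sigma,\tau}(\mathcal{F})-\mu_{\alpha,\sigma,\tau}(\mathcal{E})\textstyle\sum_i\sigma_i\rk(F_i)\big)=\sum_i\int_X\Tr(\Theta_iK^\phi_{(\alpha,\sigma,\tau)}(H_i))\,d\Vol_g-\sum_i\int_X\big(\alpha_i\sigma_i|\bar\partial_+\Theta_i|^2+(1-\alpha_i)\sigma_i|\bar\partial_-\Theta_i|^2\big)d\Vol_g-\sum_{a\in Q_1}\int_X|\phi_a^\bot|^2_Hd\Vol_g .
\]
As $\sum_i\sigma_i\rk(F_i)>0$, proving $\mu_{\alpha,\sigma,\tau}(\mathcal{F})\geq\mu_{\alpha,\sigma,\tau}(\mathcal{E})$ is equivalent to proving that the right-hand side is nonnegative.

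To control this quantity I would substitute the normalized $\mathfrak{F}^{(\varepsilon)}=\rho^{(\varepsilon)}f^{(\varepsilon)}$ into Proposition \ref{prop3.10} (the inequality merely rescales by the positive constant $(\rho_i^{(\varepsilon)})^\varsigma$) and integrate over $X$. Since $g$ is Gauduchon, $\int_XP_i\big(\Tr(\mathfrak{F}_i^{(\varepsilon)})^\varsigma\big)d\Vol_g=0$, so the integrated inequality becomes
\[
\sum_i\int_X\langle K^\phi_{(\alpha,\sigma,\tau)}(H_i),(\mathfrak{F}_i^{(\varepsilon)})^\varsigma\rangle_{H_i}\,d\Vol_g+\varepsilon\sum_i\int_X\langle\log f_i^{(\varepsilon)},(\mathfrak{F}_i^{(\varepsilon)})^\varsigma\rangle_{H_i}\,d\Vol_g+(\text{nonnegative gradient and commutator terms})\leq 0 .
\]
Letting $\varepsilon_\mathfrak{k}\to 0$ and then $\varsigma\to 0$ exactly as in Proposition \ref{prop3.13}, the first sum tends to $\sum_i\int_X\Tr\big(K^\phi_{(\alpha,\sigma,\tau)}(H_i)(\mathrm{Id}_{E_i}-\Theta_i)\big)d\Vol_g=-\sum_i\int_X\Tr(\Theta_iK^\phi_{(\alpha,\sigma,\tau)}(H_i))\,d\Vol_g$, using $\widetilde{\mathfrak{F}^\infty}=\mathrm{Id}_\mathcal{E}-\Theta$ and the trace vanishing above. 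Fatou's lemma bounds the liminf of the gradient and commutator terms from below by $\sum_i\int_X(\alpha_i\sigma_i|\bar\partial_+\Theta_i|^2+(1-\alpha_i)\sigma_i|\bar\partial_-\Theta_i|^2)d\Vol_g+\sum_{a\in Q_1}\int_X|\phi_a^\bot|^2_Hd\Vol_g$. Finally, writing $\log f_i^{(\varepsilon)}=M_i^{(\varepsilon)}\mathrm{Id}_{E_i}+\log\mathfrak{F}_i^{(\varepsilon)}$ with $M_i^{(\varepsilon)}\geq 0$ and $\mathfrak{F}_i^{(\varepsilon)}\leq\mathrm{Id}_{E_i}$, the $\varepsilon$-term splits into the nonnegative piece $\varepsilon M_i^{(\varepsilon)}\Tr(\mathfrak{F}_i^{(\varepsilon)})^\varsigma$ and a remainder $\varepsilon\langle\log\mathfrak{F}_i^{(\varepsilon)},(\mathfrak{F}_i^{(\varepsilon)})^\varsigma\rangle$ of size $O(\varepsilon/\varsigma)\to 0$, so its liminf is nonnegative. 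Combining these three facts turns the integrated inequality into the nonnegativity of the right-hand side of the master formula, which is the assertion.

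The main obstacle is the analytic justification of the iterated limit. The approximants $(\mathfrak{F}_i^{(\varepsilon_\mathfrak{k})})^\varsigma$ converge only weakly in $L^2_1$, so both the convergence of $\langle K^\phi_{(\alpha,\sigma,\tau)}(H_i),(\mathfrak{F}_i)^\varsigma\rangle$ and the lower semicontinuity of the quadratic terms must be handled with care, precisely along the lines already used in Proposition \ref{prop3.13} (Fatou's lemma together with the pointwise bound $\mathrm{Id}_{E_i}-(\mathfrak{F}_i)^{\varsigma_1}\leq\frac{2\varsigma_1}{2\varsigma_1+\varsigma_2}(\mathfrak{F}_i)^{-\varsigma_2/2}$). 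One must also confirm that the weak limit entering the Chern--Weil formula is the same $\Theta$ as in Proposition \ref{prop3.13} and that the second fundamental forms $\bar\partial_\pm\Theta_i$ are genuinely of class $L^2$, so that the degree formula for the extended subsheaf of Proposition \ref{prop2.8} is legitimate; granting this, the combination above closes the argument.
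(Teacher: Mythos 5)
Your strategy is essentially the paper's: the same Chern--Weil formula for the slope of the subsheaf cut out by $\Theta$, the same algebraic exchange of $K_{(\alpha,\sigma,\tau)}(H_i)$ for $K^\phi_{(\alpha,\sigma,\tau)}(H_i)$ at the cost of the terms $|\phi_a^\bot|^2_H=|[\phi^{*H},\Theta]_a|^2_H$, and the same analytic core, namely the differential inequality of Proposition \ref{prop3.10} evaluated on $\mathfrak{F}^{(\varepsilon_\mathfrak{k})}$, integrated over $X$ (the Gauduchon condition killing the $P_i$-term), and passed through the iterated limits $\mathfrak{k}\to\infty$, $\varsigma\to 0$ with Fatou and weak lower semicontinuity exactly as in Proposition \ref{prop3.13}.

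The one point where you depart from the paper is the treatment of the $\varepsilon$-term, and there your argument has a genuine gap. You decompose $\log f_i^{(\varepsilon)}=M_i^{(\varepsilon)}\mathrm{Id}_{E_i}+\log\mathfrak{F}_i^{(\varepsilon)}$ and assert $M_i^{(\varepsilon)}\geq 0$. But the normalization in force is only $\int_X\log\bigl(\prod_{i\in Q_0}\det f_i^{(\varepsilon)}\bigr)\,d\Vol_g=0$, i.e. $\sum_{i\in Q_0}\int_X\Tr\log f_i^{(\varepsilon)}\,d\Vol_g=0$; this yields $\max_{i\in Q_0}M_i^{(\varepsilon)}\geq 0$ but not nonnegativity at every vertex. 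Since Corollary \ref{cor3.7} (1) only bounds $\varepsilon\,|M_i^{(\varepsilon)}|\leq m_K$, a vertex with $M_i^{(\varepsilon)}$ of order $-1/\varepsilon$ is not excluded, and for such a vertex your ``nonnegative piece'' $\varepsilon M_i^{(\varepsilon)}\Tr(\mathfrak{F}_i^{(\varepsilon)})^\varsigma$ would be negative and bounded away from zero in the iterated limit, so it cannot simply be dropped. (Your remainder estimate $\varepsilon\langle\log\mathfrak{F}_i,(\mathfrak{F}_i)^\varsigma\rangle=O(\varepsilon/\varsigma)$ is fine, because $\mathfrak{k}\to\infty$ is taken before $\varsigma\to 0$.) The paper disposes of this term without needing any per-vertex sign: it pairs $\log f_i^{(\varepsilon_\mathfrak{k})}$ with $(f_i^{(\varepsilon_\mathfrak{k})})^\varsigma$ and uses the scalar inequality $\theta e^{\varsigma\theta}\geq\theta$, valid for every real $\theta$, whence $\sum_{i}\int_X\Tr\bigl(\log f_i^{(\varepsilon_\mathfrak{k})}\circ (f_i^{(\varepsilon_\mathfrak{k})})^\varsigma\bigr)\,d\Vol_g\geq \sum_i\int_X\Tr\log f_i^{(\varepsilon_\mathfrak{k})}\,d\Vol_g=0$; you should substitute this observation for your decomposition. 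Finally, note that your phrase ``the inequality merely rescales by the positive constant $(\rho_i^{(\varepsilon)})^\varsigma$'' is accurate only for the vertex terms of Proposition \ref{prop3.10}: the arrow terms couple $\rho_{h(a)}^{(\varepsilon)}$ and $\rho_{t(a)}^{(\varepsilon)}$, which are in general different constants, so passing from $f^{(\varepsilon)}$ to $\mathfrak{F}^{(\varepsilon)}$ inside the commutators requires a separate justification; the paper's own write-up glosses over this same point, so it is a defect you share with it rather than one you introduced.
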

\begin{proof}
The slope of $\mathcal{F}$ is given by
\begin{align*}
 \mu_{(\alpha,\sigma,\tau)}(\mathcal{F})=&\mu_{(\alpha,\sigma,\tau)}(\mathcal{E})+\frac{(n-1)!}{2\pi}\frac{\sum_{i\in Q_0}\int_{X\backslash S}\Tr\Big(K_{(\alpha,\sigma,\tau)}( H_i)\circ\Theta_i\Big)d\Vol_g}{\sum_{i\in Q_0}\sigma_i\rk(E_i)}\\
 &-\frac{(n-1)!}{2\pi}\frac{\sum_{i\in Q_0}\int_{X\backslash S}\Big(\alpha_i\sigma_i|\bar\partial_{+i}\Theta_i|^2_{H_i,g}+(1-\alpha_i)\sigma_i|\bar\partial_{-i}\Theta_i|^2_{H_i,g}\Big)d\Vol_g}{\sum_{i\in Q_0}\sigma_i\rk(E_i)}.
\end{align*}
Since $\sum\limits_{i\in Q_0}\int_{X\backslash S}\Tr(K^\phi_{(\alpha,\sigma,\tau)}( H_i))d\Vol_g=0$, we have
\begin{align*}
 &\sum\limits_{i\in Q_0}\int_{X\backslash S}\Tr\Big(K^\phi_{(\alpha,\sigma,\tau)}( H_i)\circ\Theta_i\Big)d\Vol_g\\
 =&-\lim\limits_{\varsigma\rightarrow0}\lim\limits_{\mathfrak{k}\rightarrow\infty}\sum_{i\in Q_0}\int_X\Tr\Big(K^\phi_{(\alpha,\sigma,\tau)}( H_i)\circ(\mathfrak{F}_i^{(\varepsilon_\mathfrak{k})})^\varsigma\Big)d\Vol_g\\
 =&\lim\limits_{\varsigma\rightarrow0}\lim\limits_{\mathfrak{k}\rightarrow\infty}\left(\sum_{i\in Q_0}\int_X\Tr\bigg(\Big(\sqrt{-1}\big(\alpha_i\sigma_i\Lambda_+\bar\partial_{+}
    ((\mathfrak{F}_i^{(\varepsilon_\mathfrak{k})})^{-1}\partial^+_{H_i}\mathfrak{F}_i^{(\varepsilon_\mathfrak{k})})\right.\\
    &\ \ \ \ \ \ \ \ \ \ \ \ \ \ \ \ \ \ \ \ \ \ \ \ \ +(1-\alpha_i)\sigma_i\Lambda_-\bar\partial_{-}((\mathfrak{F}_i^{(\varepsilon_\mathfrak{k})})^{-1}\partial^-_{H_i}\mathfrak{F}_i^{(\varepsilon_\mathfrak{k})})\big)+\varepsilon_\mathfrak{k}\log f_i^{(\varepsilon_\mathfrak{k})}\Big)\circ(\mathfrak{F}_i^{(\varepsilon_\mathfrak{k})})^\varsigma\bigg)d\Vol_g\\
    &\left.\ \ \ \ \ \ \  \ \ \ \ \ \ \ \ \ \ +\sum_{a\in Q_1}\int_X\Big\langle(\mathfrak{F}_{t(a)}^{(\varepsilon_\mathfrak{k})})^{-1}[\phi^{*H},\mathfrak{F}^{(\varepsilon_\mathfrak{k})}]_a,[\phi^{*H},\mathfrak{F}^{(\varepsilon_\mathfrak{k})}]_a\Big\rangle_Hd\Vol_g\right)\\
    \geq&\lim\limits_{\varsigma\rightarrow0}\lim\limits_{\mathfrak{k}\rightarrow\infty}\left(\sum_{i\in Q_0}\int_X\bigg(\alpha_i\sigma_i\Big|(\mathfrak{F}_i^{(\varepsilon_\mathfrak{k})})^{-\frac{\varsigma}{2}}\partial^+_{H_i}(\mathfrak{F}_i^{(\varepsilon_\mathfrak{k})})^\varsigma\Big|^2_{H_i}+(1-\alpha_i)\sigma_i\Big|(\mathfrak{F}_i^{(\varepsilon_\mathfrak{k})})^{-\frac{\varsigma}{2}}\partial^-_{H_i}(\mathfrak{F}_i^{(\varepsilon_\mathfrak{k})})^\varsigma\Big|^2_{H_i}\bigg)d\Vol_g\right.\\
 &\left.\ \ \ \ \ \ \  \ \ \ \ \ \ \ \ \ \ +\sum_{a\in Q_1}\int_X\Big\langle(\mathfrak{F}_{t(a)}^{(\varepsilon_\mathfrak{k})})^{-1}[\phi^{*H},\mathfrak{F}^{(\varepsilon_\mathfrak{k})}]_a,[\phi^{*H},\mathfrak{F}^{(\varepsilon_\mathfrak{k})}]_a\Big\rangle_Hd\Vol_g\right) \\
 \geq&\lim\limits_{\varsigma\rightarrow0}\lim\limits_{\mathfrak{k}\rightarrow\infty}\left(\sum_{i\in Q_0}\int_X\bigg(\alpha_i\sigma_i\Big|\partial^+_{H_i}\big(\mathrm{Id}_{E_i}-(\mathfrak{F}_i^{(\varepsilon_\mathfrak{k})})^\varsigma\big)\Big|^2_{H_i}+(1-\alpha_i)\sigma_i\Big|\partial^-_{H_i}\big(\mathrm{Id}_{E_i}-(\mathfrak{F}_i^{(\varepsilon_\mathfrak{k})})^\varsigma\big)\Big|^2_{H_i}\bigg)d\Vol_g\right.\\
 &\left.\ \ \ \ \ \ \  \ \ \ \ \ \ \ \ \ \ +\sum_{a\in Q_1}\int_X\Big\langle|[\phi^{*H},\mathrm{Id}-\mathfrak{F}^{(\varepsilon_\mathfrak{k})}]_a|^2_H\Big\rangle_Hd\Vol_g\right) 
 \end{align*}
    \begin{align*}
 =&\sum_{i\in Q_0}\int_{X\backslash S}\Big(\alpha_i\sigma_i|\bar\partial_{+i}\Theta_i|^2_{H_i,g}+(1-\alpha_i)\sigma_i|\bar\partial_{-i}\Theta_i|^2_{H_i,g}\Big)d\Vol_g
 +\sum_{a\in Q_1}\int_{X\backslash S}\Big|[\phi^{*H},\Theta]_a\Big|^2_Hd\Vol_g,
 \end{align*}
 where  we have noted that $$\sum\limits_{i\in Q_0}\int_X\Tr\Big(\log f_i^{(\varepsilon_\mathfrak{k})}\circ(f_i^{(\varepsilon_\mathfrak{k})})^\varsigma\Big)d\Vol_g\geq 0$$ since $\sum\limits_{i\in Q_0}\int_X\Tr(\log f_i^{(\varepsilon_\mathfrak{k})})d\Vol_g=0$ .
On the other hand, we have

\begin{align*}
  &\sum_{i\in Q_0}\int_{X\backslash S}\Tr\Big(K_{(\alpha,\sigma,\tau)}( H_i)\circ\Theta_i\Big)d\Vol_g\\=&\sum_{i\in Q_0}\int_{X\backslash S}\Tr\Big(K^\phi_{(\alpha,\sigma,\tau)}( H_i)\circ\Theta_i\Big)d\Vol_g-\sum_{a\in Q_1}\int_{X\backslash S}\Big\langle\phi^{*H}_a,[\phi^{*H},\Theta]_a\Big\rangle_Hd\Vol_g\\
  =&\sum_{i\in Q_0}\int_{X\backslash S}\Tr\Big(K^\phi_{(\alpha,\sigma,\tau)}( H_i)\circ\Theta_i\Big)d\Vol_g-\sum_{a\in Q_1}\int_{X\backslash S}\Big|[\phi^{*H},\Theta]_a\Big|^2_Hd\Vol_g,
\end{align*}
where the second equality follows from the properties that $\Theta^{*H}=\Theta$ and $(\mathrm{Id}-\Theta)_{h(a)}\circ\phi_a\circ\Theta_{t(a)}=0$ for $\forall a\in Q_1$. Putting the above calculations together confirms the lemma.
\end{proof}

Now combining Lemmas \ref{lem3.1}, \ref{lem3.9} and \ref{lem3.14} gives us the main theorem:
\begin{theorem}\label{thm3.15} Let $Q=(Q_0,Q_1)$ be a quiver, and $\mathcal{E}=(E,\phi)$ be an $I_\pm$-holomorphic $Q$-bundle over an  $n$-dimensional compact  generalized K\"{a}hler manifold $(X, I_+,I_-,g, b)$ such that $g$ is Gauduchon with respect to both $I_+$ and $I_-$, then  $\mathcal{E}$ is $(\alpha,\sigma,\tau)$-polystable if and only if $\mathcal{E}$ admits an  $(\alpha,\sigma,\tau)$-Hermitian--Einstein metric.
\end{theorem}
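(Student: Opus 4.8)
The plan is to establish the two implications of the equivalence separately. The forward direction, that the existence of an $(\alpha,\sigma,\tau)$-Hermitian--Einstein metric forces $(\alpha,\sigma,\tau)$-polystability, is precisely Lemma \ref{lem3.1}, so nothing further is needed there. All the real work lies in the converse, and my first move is to reduce the polystable case to the stable case. By definition $\mathcal{E}=\bigoplus_k\mathcal{E}^{(k)}$ is an orthogonal direct sum of $(\alpha,\sigma,\tau)$-stable $Q$-subsheaves, each of slope $\mu_{\alpha,\sigma,\tau}(\mathcal{E}^{(k)})=\mu_{\alpha,\sigma,\tau}(\mathcal{E})$. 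Because each $\mathcal{E}^{(k)}$ is a $Q$-subsheaf, every $\phi_a$ is block-diagonal with respect to this decomposition, so both the curvature terms and the terms $\phi_a\circ\phi_a^{*H},\ \phi_a^{*H}\circ\phi_a$ in Definition \ref{def2.10} are block-diagonal for the direct-sum metric. Since the Einstein constant $\gamma$ equals the common slope (see the remark after Definition \ref{def2.10}), it suffices to produce a Hermitian--Einstein metric on each stable summand and take the orthogonal direct sum. Thus I may assume $\mathcal{E}$ is stable, hence simple by Proposition \ref{prop2.9}(3), which is exactly the hypothesis under which the estimates of Propositions \ref{prop3.5} and \ref{prop3.8} are available.

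For a simple $\mathcal{E}$ I would run the continuity method along the perturbed family $\{L^\varepsilon_{(\alpha,\sigma,\tau)i}(f)=0\}_{i\in Q_0}$ for $\varepsilon\in(0,1]$. First, Proposition \ref{prop3.2} fixes a background metric $H$ for which the equation at $\varepsilon=1$ is solvable with the normalization $\prod_{i\in Q_0}\det f_i=1$, so that $1\in J$; Proposition \ref{prop3.4} gives openness of $J$ via the implicit function theorem together with the injectivity of the linearized operator $\mathbb{L}^{\varepsilon,f}_{(\alpha,\sigma,\tau)}$; and Lemma \ref{lem3.9}(1) upgrades this to closedness, yielding $J=(0,1]$ and a smooth family of solutions $f^{(\varepsilon)}$ for all $\varepsilon\in(0,1]$, normalized by $\int_X\log(\prod_{i}\det f_i^{(\varepsilon)})\,d\Vol_g=0$.

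The decisive step is then a dichotomy as $\varepsilon\to 0$, governed by whether $\max_{i\in Q_0}\|f_i^{(\varepsilon)}\|_{L^2}$ stays bounded. If it does, Lemma \ref{lem3.9}(2) produces a solution $f^{(0)}$ of the unperturbed system $\{L_{(\alpha,\sigma,\tau)i}(f)=0\}_{i\in Q_0}$, i.e.\ the desired Hermitian--Einstein metric $H f^{(0)}$. If instead $\limsup_{\varepsilon\to 0}\|f_{i_0}^{(\varepsilon)}\|_{L^2}=\infty$, then Propositions \ref{prop3.12} and \ref{prop3.13} extract from the rescaled endomorphisms $\mathfrak{F}_i^{(\varepsilon)}=\rho_i^{(\varepsilon)}f_i^{(\varepsilon)}$ a nonzero weak $L_1^2$ limit $\widetilde{\mathfrak{F}^\infty}$ whose complementary projection $\Theta=\mathrm{Id}_{\mathcal{E}}-\widetilde{\mathfrak{F}^\infty}$ defines a proper coherent $Q$-subsheaf $\mathcal{F}$ with $0<\sum_i\rk(F_i)<\sum_i\rk(E_i)$, and Lemma \ref{lem3.14} shows $\mu_{\alpha,\sigma,\tau}(\mathcal{F})\geq\mu_{\alpha,\sigma,\tau}(\mathcal{E})$. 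This violates the strict slope inequality defining stability, so the unbounded alternative is impossible; the bounded case must occur and delivers the metric on $\mathcal{E}^{(k)}$, completing the reduction.

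The hard part will be the closedness in Lemma \ref{lem3.9}(1): it requires uniform $L_2^p$ a priori estimates on $f^{(\varepsilon)}$ up to the endpoints of $J$, which rest on the pointwise and $L^2$ bounds of Corollary \ref{cor3.7} and Proposition \ref{prop3.8}, and ultimately on the strict positivity of the Laplacian-type operator $\Delta$ guaranteed by simplicity in Proposition \ref{prop3.5}. The second genuinely delicate point is the construction of the destabilizing subsheaf in the unbounded alternative, where one passes to weak $L_1^2$ limits and invokes the Uhlenbeck--Yau regularity theorem to convert the weakly holomorphic projection $\Theta$ into an honest coherent $Q$-subsheaf satisfying properties (1)--(3) of Proposition \ref{prop3.13}. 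Once these analytic inputs are secured, assembling Lemmas \ref{lem3.1}, \ref{lem3.9} and \ref{lem3.14} exactly as above yields both directions of the equivalence.
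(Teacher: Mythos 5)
Your proposal is correct and takes essentially the same route as the paper: Lemma \ref{lem3.1} for the forward direction, and for the converse the continuity method (Propositions \ref{prop3.2} and \ref{prop3.4}, Lemma \ref{lem3.9}) with the dichotomy as $\varepsilon\to 0$ resolved either by Lemma \ref{lem3.9}(2) or by the destabilizing subsheaf of Propositions \ref{prop3.12}, \ref{prop3.13} and Lemma \ref{lem3.14}, contradicting stability. The one point where you go beyond the paper's text is the explicit reduction from polystable to stable (via simplicity, Proposition \ref{prop2.9}(3)) using a block-diagonal direct-sum metric; the paper leaves this step implicit, and your justification is sound since the Einstein constant $\gamma$ equals the common slope $\mu_{\alpha,\sigma,\tau}(\mathcal{E})$ on every summand.
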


\paragraph{\textbf{Acknowledgments}}The author Z. Hu  would like to thank Prof. Yang-Hui He and Prof. Kang Zuo for their useful discussions, and the author P. Huang would like to thank Prof. Jiayu Li and Prof. Xi Zhang for their  kind help and encouragement. The author P. Huang was financial supported by China Scholarship Council (No. 201706340032). The authors would like to thank the referees for their careful reading.

\end{document}